\DeclareMathAlphabet{\mathpzc}{OT1}{pzc}{m}{it}
\newtheorem{thm}[equation]{Theorem}
\newtheorem{rmk}[equation]{Remark}
\newtheorem{prop}[equation]{Proposition}
\newtheorem{cor}[equation]{Corollary}
\newtheorem{lem}[equation]{Lemma}
\newtheorem{dfn}[equation]{Definition}
\numberwithin{equation}{section}
\numberwithin{equation}{section}
\newcommand{\be}{begin{equation}}
\newcommand{\q}{\mathbb{Q}}
\newcommand{\e}{{\varepsilon}}
\renewcommand{\q}{\mathbb{Q}}
\renewcommand{\c}{\mathbb{C}}
\newcommand{\br}{\mathbb{R}}
\newcommand{{\grinv}}{{\Cal G}^{-r}}
\newcommand{\ba}{\backslash}
\newcommand{\G}{\Gamma}
\newcommand{\g}{\gamma}
\newcommand{\Haar}{\operatorname{Haar}}
\newcommand{\Cal}{\mathcal}
\newcommand{\la}{\langle}
\newcommand{\ra}{\rangle}
\newcommand{\sg}{\sigma}
\newcommand{\bp}{\begin{pmatrix}}
\newcommand{\ep}{\end{pmatrix}}
\renewcommand{\bp}{{\rm bp}}
\newcommand{\SO}{\operatorname{SO}}
\newcommand{\T}{\operatorname{T}}
\newcommand{\PSL}{\op{PSL}}
\newcommand{\cS}{{\mathcal S}}
\newcommand{\vcal}{\mathcal{V}}
\newcommand{\op}{\operatorname}
\newcommand{\BR}{\operatorname{BR}}
\newcommand{\BMS}{\operatorname{BMS}}
\renewcommand{\setminus}{-}
\newcommand{\Lie}{{\rm Lie}}
\renewcommand{\be}{\begin{equation}}
\newcommand{\ee}{\end{equation}}
\newcommand{\fG}{\mathfrak B}
\newcommand{\B}{B}
\newcommand{\cusp}{{\rm cusp}}
\newcommand{\hcal}{\mathcal{H}}
\newcommand{\W}{\mathcal W}
\newcommand{\V}{\mathcal V}
\newcommand{\vare}{\varepsilon}
\begin{document}

\title[Holonomy]
{Closed geodesics and holonomies for Kleinian manifolds}

\author{Gregory Margulis}
\address{Mathematics department, Yale university, New Haven, CT 06511}
\email{gregorii.margulis@yale.edu}
\thanks{Margulis was supported in part by NSF Grant \#1265695.}
\author{Amir Mohammadi}
\address{Department of Mathematics, The University of Texas at Austin, Austin, TX 78750}
\email{amir@math.utexas.edu}
\thanks{Mohammadi was supported in part by NSF Grant \#1200388.}

\author{Hee Oh}
\address{Mathematics department, Yale university, New Haven, CT 06511
and Korea Institute for Advanced Study, Seoul, Korea}
\email{hee.oh@yale.edu}
%\thanks{The authors are supported in part by NSF grants.}
\thanks{Oh was supported in part by NSF Grant \#1068094.}

\subjclass[2010] {Primary 11N45, 37F35, 22E40; Secondary 37A17, 20F67}

\keywords{Geometrically finite groups, Closed geodesics, Holonomy, Bowen-Margulis-Sullivan measure}

%\address{Mathematics department, Brown university, Providence, RI 02912
%and Korea Institute for Advanced Study, Seoul, Korea}
%\email{heeoh@math.brown.edu}
%\thanks{supported in part by NSF Grant \#1068094.}

%\subjclass[2010] {Primary 11N45, 37F35, 22E40; Secondary 37A17, 20F67}

%\keywords{Geometrically finite hyperbolic groups, mixing of geodesic flow, Patterson-Sullivan measure}

\begin{abstract} For a rank one Lie group $G$ and a Zariski dense and
 geometrically finite subgroup $\G$ of $G$, we establish 
the joint equidistribution of  closed geodesics and their holonomy classes
 for the associated locally symmetric space.
Our result  is given in a quantitative form for geometrically finite real hyperbolic manifolds  whose 
critical exponents  are big enough. In the case when $G=\PSL_2(\c)$, our results imply the
equidistribution of eigenvalues of elements of $\G$ in the complex plane.
 
When $\G$ is a lattice, the equidistribution of holonomies
was proved by Sarnak and Wakayama in 1999 using the Selberg trace formula. 

\end{abstract}

\maketitle
%\tableofcontents

\section{Introduction}
A rank one locally symmetric space $X$  is of the form $\Gamma\ba G/ K$ where $G$ is a connected simple
linear Lie group of real rank one, $K$ is a maximal compact subgroup of $G$ and $\Gamma$ is a torsion-free discrete subgroup
 of $G$.  Let $o:=[K]\in G/K$ and choose a unit tangent vector $v_o$ at $o$. Let $M$ denote the
 subgroup of $G$  which stabilizes $v_o$. The unit tangent bundle $\T^1(X)$ of $X$ can be identified with
 $\G\ba G/M$.
 Each closed geodesic $C$ on $\T^1(X)$ gives rise to the holonomy conjugacy class $h_C$ in $M$ which is obtained 
by parallel transport about $C$.

Our aim in this paper is to establish
the equidistribution of holonomies about closed geodesics $C$ with length $\ell(C)$
going to infinity, when $\G$ is geometrically finite and
Zariski dense in $G$.  We will indeed prove a stronger joint equidistribution theorem for closed geodesics and their holonomy classes.
A discrete subgroup $\G$ is called
  {\it geometrically finite} if the unit neighborhood of its convex core in $X$ is of finite Riemannian volume (cf. \cite{Bow}).
 Lattices are clearly geometrically finite, but there is also a big class
 of discrete subgroups of infinite co-volume which are geometrically finite.
 For instance, if $G/K$ is the real hyperbolic space $\mathbb H^n$ and
 $G=\op{SO}(n,1)^\circ$ is the group of its orientation preserving isometries, any discrete group $\G$ admitting a finite-sided
 convex fundamental domain is geometrically finite. The fundamental group
 of a finite volume hyperbolic manifold with non-empty totally geodesic boundary is also known to be geometrically finite. We denote by
 $\delta=\delta_\G$ the critical exponent of $\G$. It is well-known that $\delta>0$ if $\G$ is non-elementary.

In this paper,  a closed geodesic in $\T^1(X)$ is always meant to be a {\it primitive} closed geodesic, unless mentioned otherwise.
For $T>0$, we set
 $$\mathcal G_\G(T):=\{C: \text{$C$ is a  closed geodesic in $\T^1(X)$,}\; \ell(C)\le T\}.$$
 
%When $\G$ is Zariski dense,  it follows from the work of Prasad and Rapinchuk \cite{PR} that
   %the set of all holonomy classes about closed geodesics in $\T^1(X)$ is dense in $M$. If $\G$ is geometrically finite as well,
  % we have  the following equidistribution theorem:

The following theorem follows from a stronger joint equidistribution theorem \ref{eqt}.
  \begin{thm}\label{m1} Let $\G$ be geometrically finite and Zariski dense in $G$. 
   Then for any continuous class function $\varphi$ on $M$,
 $$\sum_{C\in \mathcal G_\G(T)} \varphi(h_C) \sim
 \frac{e^{\delta T}}{\delta T} \int_M \varphi \; dm\quad\text{as $T\to \infty$} $$
 where $dm$ is the Haar probability measure on $M$.
 \end{thm}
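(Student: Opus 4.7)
The strategy is to deduce Theorem \ref{m1} from the joint equidistribution theorem \ref{eqt}, which records both the closed geodesic $C$ and its holonomy $h_C$ simultaneously. The natural form of such a statement is that, as $T \to \infty$, the measures
\begin{equation*}
\nu_T := \frac{\delta}{e^{\delta T}} \sum_{C \in \mathcal{G}_\Gamma(T)} \mathcal{L}_C \otimes \delta_{h_C}
\end{equation*}
on $\T^1(X) \times M$ (with $\mathcal{L}_C$ the arc-length measure on $C$, of total mass $\ell(C)$, and $\delta_{h_C}$ viewed as a conjugation-invariant probability measure on $M$) converge weakly to $\|m^{\BMS}\|^{-1}\, m^{\BMS} \otimes dm$, where $m^{\BMS}$ is the Bowen-Margulis-Sullivan measure on $\T^1(X)$, finite by geometric finiteness of $\Gamma$.

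Granted \ref{eqt}, Theorem \ref{m1} follows in two moves. First, pair $\nu_T$ against $1 \otimes \varphi$: the $M$-integral produces $\int_M \varphi\, dm$, the base factor integrates to $\|m^{\BMS}\|$, and $\int_{\T^1(X)} d\mathcal{L}_C = \ell(C)$, so that
\begin{equation*}
\sum_{C \in \mathcal{G}_\Gamma(T)} \ell(C)\,\varphi(h_C) \;\sim\; \frac{e^{\delta T}}{\delta}\int_M \varphi\, dm.
\end{equation*}
Second, convert this length-weighted sum to the unweighted count in the theorem by partial summation: writing $N_\varphi(T) = \sum_{\ell(C)\le T} \varphi(h_C)$ and $S_\varphi(T)$ for the weighted sum, one has $N_\varphi(T) = S_\varphi(T)/T + \int_{T_0}^T S_\varphi(t)/t^2\, dt$, and substituting the asymptotic for $S_\varphi$ (applied to $\varphi_+$ and $\varphi_-$ separately so the sums are monotone) yields $N_\varphi(T) \sim \frac{e^{\delta T}}{\delta T}\int_M \varphi\, dm$.

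The hard part is \ref{eqt} itself, whose engine is the mixing of the frame flow, i.e.\ the right $AM$-action on $\Gamma\backslash G$, with respect to $m^{\BMS}$. Zariski density of $\Gamma$ in $G$ is indispensable here: without it the $M$-direction could synchronize with the $A$-flow and the limiting $M$-distribution could collapse onto a proper closed subgroup rather than Haar. Given frame-flow mixing, \ref{eqt} follows from a Margulis-style thickening argument: each primitive loxodromic $\gamma\in\Gamma$ is $G$-conjugate to $m_\gamma a_{\ell(C)}$ with $[m_\gamma]=h_C$, so closed $A$-orbits in $\Gamma\backslash G$ come naturally packaged with their holonomies, and flowing a small BMS-weighted transversal around each closed orbit for large time matches the resulting joint distribution on $\T^1(X)\times M$ with $m^{\BMS}\otimes dm$.
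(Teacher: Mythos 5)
Your proposal is correct and follows essentially the same route as the paper: reduce Theorem~\ref{m1} to the length-weighted joint equidistribution $\mu_T(f\otimes\xi)\sim\frac{e^{\delta T}}{\delta|m^{\BMS}|}m^{\BMS}(f\otimes\xi)$ (which rests on Winter's mixing of the $AM$-action on $(\Gamma\backslash G,m^{\BMS})$, a Margulis flow-box/thickening argument, and the effective closing lemma), then convert the length-weighted sum to the unweighted count. The one point where you diverge stylistically is the conversion step: you propose Abel/partial summation $N_\varphi(T)=S_\varphi(T)/T+\int_{T_0}^T S_\varphi(t)/t^2\,dt+\text{const}$, whereas for the non-effective statement the paper instead splits the sum at $(1-\varepsilon)T$ and uses $1\le\ell(C)$ and $\ell(C)\le T$ on the two ranges; the paper reserves the Abel-summation computation for the effective version (to extract the $\op{li}(e^{\delta T})$ main term). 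Both conversions are standard and work; partial summation requires noting that $\int^T S_\varphi(t)/t^2\,dt=O(e^{\delta T}/T^2)=o(e^{\delta T}/T)$ using only the crude bound $S_\varphi(t)=O(e^{\delta t})$, so the non-uniformity of the asymptotic for $S_\varphi$ is not an issue. One small omission worth flagging: pairing against $1\otimes\varphi$ uses Theorem~\ref{eqt} with $f\equiv 1$, a bounded but not compactly supported function; justifying this is precisely where geometric finiteness (beyond $|m^{\BMS}|<\infty$) enters, via Roblin's control on the cuspidal contribution, and your proposal gestures at but does not isolate this step.
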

       The  asymptotic of $\# \mathcal G_\G(T)$ was well-known, due to
Margulis \cite{M_T} for $X$ compact, to Gangolli and Warner \cite{GW} for $X$ noncompact but of finite volume, and to Roblin \cite{R_T} for $X$ geometrically finite:
$$\# \mathcal G_\G(T)\sim \frac{e^{\delta T}}{{\delta} T}.$$
 We do not rely on this result in our proof of Theorem \ref{m1}.

If we define $ \mathcal G^{\dag}_\G(T)$ to be the set of all primitive and non-primitive closed geodesics of length at most $T$,
then it is easy to see 
that  $$ \# \mathcal G^{\dag}_\G(T)=  \# \mathcal G_\G(T)+ O(T) \cdot  \# \mathcal G_\G(T/2) .$$

Therefore Theorem \ref{m1} remains the same if we replace $\mathcal G_\G(T)$ by
$ \mathcal G^{\dag}_\G(T)$.
It is worth mentioning that if one considers all geodesics, then 
it follows from the work of Prasad and Rapinchuk \cite{PR} that
the set of all holonomy classes about closed geodesics in $\T^1(X)$ is dense in the space of  all conjugacy classes of $M$.

When $G=\SO(n,1)^\circ$ and
 $\G$ is a co-compact lattice, Theorem \ref{m1}  was known due to  Parry and Pollicott  \cite{PP}, who showed that the topological mixing
 of the frame flow on a compact manifold implies the equidistribution of holonomies.
 When $\G$ is a lattice in a general rank one group $G$, Theorem \ref{m1}
  was proved by Sarnak and Wakayama \cite{SW}; their method is based on the Selberg trace formula
  and produces an error term.  Therefore Theorem \ref{m1} is new only
   when $\G$ is of infinite co-volume in $G$. However our approach gives a more direct
  dynamical proof of Theorem \ref{m1} even in the lattice case.

Recall the log integral function:
  $$\op{li}(x):=\int_{2}^{x}  \frac{dt}{\log t} \sim \frac{x}{\log x} \left[1+\frac{1}{\log x} + \cdots \right] .$$ 
\begin{thm}\label{m2}
Let $\G$ be a geometrically finite subgroup of $G=\SO(n,1)^\circ$ with $n\ge 3$.
We suppose that  $\delta> \max\{n-2, (n-2+\kappa)/2\} $ where $\kappa$
is the maximum rank of all parabolic fixed points of
$\G$. Then there exists $\eta>0$ such that
 for any smooth class function $\varphi$ on $M$,
 $$\sum_{C\in \mathcal G_\G(T)} \varphi(h_C) =
\op{li} (e^{\delta T})   \int_M \varphi \; dm +O(e^{(\delta -\eta) T}) \quad\text{as $T\to \infty$} $$
 where   the implied constant depends only on the Sobolev norm of $\varphi$.
\end{thm}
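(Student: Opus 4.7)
The plan is to combine a Peter--Weyl Fourier decomposition of the smooth class function $\varphi$ on the compact group $M\cong \SO(n-1)$ with an effective mixing estimate for the frame flow on $\Gamma\ba G$, and then to run a Margulis-type transversal thickening argument to count closed geodesics weighted by their holonomies. Writing $\varphi=\sum_\pi c_\pi \chi_\pi$ with $\chi_\pi$ ranging over characters of irreducible representations of $M$, the smoothness of $\varphi$ gives super-polynomial decay of $c_\pi$ in the highest weight. The trivial representation contributes the main term $\op{li}(e^{\delta T})\int_M \varphi\,dm$, while every nontrivial $\pi$ must be shown to contribute only $O(e^{(\delta-\eta)T})$ with at most polynomial dependence on $\pi$, so that the sum in $\pi$ converges absolutely after weighting by the $c_\pi$.

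For the dynamics, lift from $\T^1(X)=\Gamma\ba G/M$ to $\Gamma\ba G$ and note that each primitive closed geodesic $C$ corresponds to the $AM$-orbit of a primitive hyperbolic conjugacy class $[\g]$ in $\Gamma$, with holonomy $h_C$ equal to the $M$-component of the first-return of the frame flow along the orbit; hence $\chi_\pi(h_C)$ is the trace of parallel transport on the associated equivariant vector bundle. Following Margulis's thesis, place a small box $B$ of transversal widths $\e$ along the strong-stable, strong-unstable and $M$ directions at a typical point of the BMS--Haar measure on $\Gamma\ba G$. Closed $AM$-orbits passing through $B$ of period near $t$ are counted, with weight $\chi_\pi$ of their $M$-return, by an autocorrelation integral of the frame flow of the form
\be
\int_{\Gamma\ba G} \mathbf 1_B(x)\,\mathbf 1_B(x a_t)\,\chi_\pi\bigl(m_t(x)\bigr)\,d\tilde m^{\BMS}(x),
\ee
where $m_t(x)$ records the $M$-drift along the orbit. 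Under the hypothesis $\delta>\max\{n-2,(n-2+\kappa)/2\}$, effective mixing of the frame flow with respect to the BMS--Haar measure, with exponential rate $e^{-\eta t}$ against Sobolev norms, produces this integral up to the desired error. A Tauberian step, carried out by a smooth partition of unity in $t$ together with a divisor-sum manipulation to separate primitive from non-primitive orbits, upgrades the resulting $e^{\delta T}/(\delta T)$ leading behavior into $\op{li}(e^{\delta T})$, and for nontrivial $\pi$ the Haar integral of $\chi_\pi$ vanishes so only the exponentially smaller error term survives.

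The hard part is establishing the quantitative mixing of the frame flow itself in the geometrically finite, infinite-volume setting with cusps: the lower bounds on $\delta$ are precisely the thresholds above which one has a spectral gap for the base eigenfunction on $L^2(\Gamma\ba G)$ and below which BMS mass concentrates in the cusps too strongly for quantitative mixing to hold, the rank $\kappa$ entering through the polynomial behaviour of BMS measure in rank-$\kappa$ cuspidal horoballs. A second technical obstacle is controlling the $\pi$-dependence of the mixing error uniformly enough that, after weighting by the Sobolev coefficients $c_\pi$ of $\varphi$, the series in $\pi$ converges and yields a final error governed by a fixed Sobolev norm of $\varphi$; this will be handled by applying the mixing estimate to sections of the $\pi$-equivariant bundle and invoking the standard dimension bound $\dim V_\pi\ll (1+\|\lambda_\pi\|)^{(\dim M-1)/2}$ on representations of $M$. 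The final error term is then assembled from the mixing rate $e^{-\eta t}$ and the thickening error in $\e$, optimized in a standard way.
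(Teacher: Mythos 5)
Your plan is a genuinely different route from the paper's. The paper never Fourier-decomposes $\varphi$ on $M$: it works directly with the product measure $m^{\BMS}\otimes dm$ on $\T^1(X)\times M^{\textsc{c}}$, fixes a Borel set $\Omega\subset M$ with regular boundary, and converts the count of closed geodesics through a flow box $\tilde\fG(g_0,\e)$ with holonomy in $\Omega$ into the lattice-point count $\#\,\G\cap\fG(g_0,\e)A_T^+\Omega\,\fG(g_0,\e)^{-1}$; the latter is then controlled via effective mixing of $A$ on $L^2(\G\ba G,m^{\Haar})$ (Theorem~\ref{hem}). Your Peter--Weyl decomposition $\varphi=\sum_\pi c_\pi\chi_\pi$ reorganizes the same data along isotypic components; it buys a clean separation of the main term (trivial $\pi$) from the error (nontrivial $\pi$, where $\int\chi_\pi\,dm=0$), at the cost of having to establish $\pi$-uniformity. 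That uniformity is plausible since the mixing rate $\eta$ is a spectral property of the $A$-action not depending on $\pi$, while the Sobolev norms of $\chi_\pi$ and $\dim V_\pi$ grow only polynomially in the highest weight; the paper sidesteps this by working with sets $\Omega$ and boundary-regularity (admissibility), which in turn requires the $\e$-thickening/unthickening analysis of $\cS_{T,\e}^\pm$ via the strong wavefront lemma.

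Two places where your sketch is under-specified and the paper supplies the decisive input. First, the quantity $m_t(x)$ in your autocorrelation integral is not defined for generic $x$; only orbits that actually close up mod $M$ have a holonomy. Converting approximate returns to the box into genuine closed $AM$-orbits, with quantitative control on how the length and the $M$-return change, is the content of the effective closing lemma (Lemma~\ref{eecl}): an approximate return $g_1\tilde a_\gamma\tilde m_\gamma=\gamma g_2$ with $g_1,g_2\in\fG(g_0,\e)$ and $T=d(\tilde a_\gamma,e)$ yields a conjugating element $g\in\fG(g_0,\e+O(\e e^{-T}))$ with $\gamma=ga_\gamma m_\gamma g^{-1}$, $a_\gamma\sim_{O(\e)}\tilde a_\gamma$, $[m_\gamma]\sim_{O(\e)}[\tilde m_\gamma]$. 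Your ``Margulis-type thickening'' implicitly needs precisely this $O(\e e^{-T})$ precision in the transversal, $O(\e)$ precision in length, and crucially the $O(\e)$ precision in the $M$-conjugacy class; the latter is the genuinely new part relative to the length-only closing lemma. Second, Theorem~\ref{m2} is an unweighted sum over all $C\in\mathcal G_\G(T)$ (i.e.\ $f\equiv1$ on $\T^1(X)$, which is merely bounded, not compactly supported), so you must also control the contribution of closed geodesics penetrating deep into the cusps; the paper does this via Roblin's estimate (Theorem~\ref{cc}), and this is where the exponent $\kappa$ enters the explicit rate $e^{(\kappa-2\delta)r}$, not only through the spectral gap. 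If you supply these two ingredients with the $\pi$-uniformity you already flag, your route should close.
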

 Theorem \ref{m2} gives a quantitative counting result for closed geodesics of length at most $T$.
This was known 
when $\G$ is a lattice by the work of Selberg, and Gangolli-Warner \cite{GW} by the trace formula approach,
or when $\G$ is a convex co-compact subgroup of $\SO(2,1)$ by Naud \cite{Na} by the symbolic dynamics approach.
 We remark that Theorem \ref{m2} can be extended to geometrically finite groups in other rank one Lie groups for which
Theorem \ref{hem} holds; this will be evident from our proof.

As is well-known, the set of closed geodesics in $\T^1(X)$ is in one-to-one correspondence with
the set of conjugacy classes of primitive hyperbolic elements of $\G$.
If $A=\{a_t\}$ denotes the one parameter subgroup whose right translation action on $\G\ba G/M$
corresponds to the geodesic flow on $\T^1(X)$, then any hyperbolic element $g\in G$ is conjugate to
$a_g m_g$ with $a_g\in A^+:=\{a_t: t>0\}$ and $m_g\in M$. Moreover $a_g$ is uniquely determined,
and $m_g$ is uniquely determined up to a conjugation in $M$. 
Denote by $[\gamma]$ the conjugacy class of $\gamma$ in $\Gamma$ and
by $[\Gamma_{ph}]$ the set of all conjugacy classes of {\it primitive hyperbolic} elements of $\Gamma$.
Given a closed geodesic $C$  in $\T^1(X)$, if $[\gamma]\in [\Gamma_{ph}]$ is the corresponding conjugacy class,
 then the holonomy class $h_C$ is precisely the conjugacy class 
$[m_\gamma]$. Therefore Theorem \ref{m1}
can also be interpreted as
the equidistribution of $[m_\gamma]$'s among
primitive hyperbolic conjugacy classes of $\Gamma$.

For $G=\PSL_2(\c)$,
Theorem \ref{m1} implies the equidistribution of  eigenvalues of $\Gamma$.
If we denote by $\lambda_\gamma$ and $ \lambda_\gamma^{-1}$ the eigenvalues
of $\gamma \in \G$ (up to sign) so that  $|\lambda_\gamma|\ge 1$,
then $\gamma$
is hyperbolic if and only if $|\lambda_\gamma|>1$.

The aforementioned result of Prasad and Rapinchuk says that any Zariski dense subgroup $\Gamma$ contains a hyperbolic element
$\gamma$ such that the argument of the complex number $\lambda_\gamma$ is an irrational multiple of $\pi$ \cite{PR}.
We show a stronger theorem that the arguments of $\lambda_\gamma$'s are equidistributed
in all directions when $\Gamma$ is geometrically finite.
\begin{thm}\label{m3}
Let $G=\PSL_2(\c)$ and $\G$ be a geometrically finite and Zariski dense subgroup of $G$.
%Let $\mathcal E_\Gamma (prim)\subset \c$ denote the set
%of all eigenvalues  of modulus at least $1$
%of primitive elements of $\Gamma$. 
 
 For any $0<\theta_1<\theta_2<\pi$, we have
 \be \label{pp} \#  \{[\gamma]\in [\Gamma_{ph}]:  |\lambda_\gamma|<T,\; \theta_1< \op{Arg}(\lambda_\gamma )<\theta_2\} \sim 
 \frac{ (\theta_2-\theta_1)T^{2\delta}}{2\pi \delta \log T} \quad \text{as $T\to \infty$.} 
  \ee 
   If $\delta >1$ and $\Gamma$ has no rank $2$ cusp,  or if  $\delta>3/2$ in general, 
   then there exists $\e_0>0$ such that
    \be \label{pp2} \#  \{[\gamma]\in [\Gamma_{ph}]:  |\lambda_\gamma|<T,\; \theta_1< \op{Arg}(\lambda_\gamma )<\theta_2\} =
 \frac{ (\theta_2-\theta_1)}{\pi} \op{li} (T^{2\delta}) + O(T^{2\delta -\e_0}) .\ee
     \end{thm}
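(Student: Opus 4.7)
The plan is to reduce Theorem~\ref{m3} to Theorems~\ref{m1} and~\ref{m2} via the standard dictionary between eigenvalues of hyperbolic elements in $\PSL_2(\c)$ and the length--holonomy data of their associated closed geodesics. Any hyperbolic $\gamma\in G$ is $G$-conjugate to $\operatorname{diag}(\lambda_\gamma,\lambda_\gamma^{-1})$ with $|\lambda_\gamma|>1$; writing $\lambda_\gamma=e^{t/2}e^{i\theta}$ with $t>0$ and $\theta\in\R/\pi\Z$, and taking $a_t=\operatorname{diag}(e^{t/2},e^{-t/2})\in A$ and $m_\theta=\operatorname{diag}(e^{i\theta},e^{-i\theta})\in M\cong\R/\pi\Z$, the $AM$-decomposition of this representative is simply $a_t m_\theta$. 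Hence $\ell(C_\gamma)=2\log|\lambda_\gamma|$ and $h_{C_\gamma}=m_{\op{Arg}(\lambda_\gamma)}$, so the sector condition $\theta_1<\op{Arg}(\lambda_\gamma)<\theta_2$ becomes $h_{C_\gamma}\in I$ for the arc $I=\{m_\theta:\theta_1<\theta<\theta_2\}\subset M$ of Haar probability measure $(\theta_2-\theta_1)/\pi$, and the condition $|\lambda_\gamma|<T$ becomes $\ell(C_\gamma)<2\log T$.

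Under this dictionary, $(\ref{pp})$ is equivalent to
\[
\sum_{C\in\mathcal G_\G(2\log T)}\mathbf 1_I(h_C)\sim\frac{e^{2\delta\log T}}{2\delta\log T}\cdot\frac{\theta_2-\theta_1}{\pi},
\]
which equals the claimed $\frac{(\theta_2-\theta_1)T^{2\delta}}{2\pi\delta\log T}$. Since $M$ is abelian every function on $M$ is automatically a class function, so I would sandwich $\mathbf 1_I$ between continuous $\varphi^\pm_\e$ with $\varphi^-_\e\le\mathbf 1_I\le\varphi^+_\e$ and $\int_M(\varphi^+_\e-\varphi^-_\e)\,dm<\e$, apply Theorem~\ref{m1} at time $2\log T$ to each, and let $\e\to 0$.

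For the quantitative bound $(\ref{pp2})$, the hypothesis on $\delta$ is precisely the hypothesis of Theorem~\ref{m2} for $G=\SO(3,1)^\circ$: with $n=3$, the quantity $\max\{n-2,(n-2+\kappa)/2\}$ equals $1$ when every parabolic fixed point of $\G$ has rank at most $1$, and equals $3/2$ when $\G$ has a rank $2$ cusp. I would approximate $\mathbf 1_I$ by smooth class functions $\varphi^\pm_\e$ with $\varphi^-_\e\le\mathbf 1_I\le\varphi^+_\e$, $\int_M(\varphi^+_\e-\varphi^-_\e)\,dm=O(\e)$ and Sobolev norm $O(\e^{-k})$ for a fixed $k$ (a standard construction on $S^1$). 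Applying Theorem~\ref{m2} at time $2\log T$ to each $\varphi^\pm_\e$ and sandwiching gives
\[
\sum_{C\in\mathcal G_\G(2\log T)}\mathbf 1_I(h_C)=\tfrac{\theta_2-\theta_1}{\pi}\op{li}(T^{2\delta})+O\bigl(\e\op{li}(T^{2\delta})\bigr)+O\bigl(\e^{-k}T^{2\delta-2\eta}\bigr).
\]
Choosing $\e=T^{-\alpha}$ for any $\alpha\in(0,2\eta/(k+1))$ balances the two error terms and yields a power saving $O(T^{2\delta-\e_0})$ for some $\e_0>0$.

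The main obstacle is essentially bookkeeping: one must pin down the factor of~$2$ in the dictionary $\ell(C_\gamma)=2\log|\lambda_\gamma|$ (which accounts for $T^{2\delta}$ rather than $T^\delta$ in the conclusion) and the parametrization of $M$ as $\R/\pi\Z$ (which produces the constant $(\theta_2-\theta_1)/\pi$ rather than $(\theta_2-\theta_1)/(2\pi)$). Beyond this the argument is a direct consequence of Theorems~\ref{m1} and~\ref{m2} combined with the standard smooth approximation of an indicator function on $S^1$, and I do not foresee any genuine analytic difficulty.
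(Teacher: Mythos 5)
Your proposal is correct and matches the route the paper takes: the paper's own argument for Theorem~\ref{m3} is exactly the one-line dictionary (length $=2\log|\lambda_\gamma|$, holonomy $=\operatorname{Arg}(\lambda_\gamma)$ as an element of $M\cong\R/\pi\Z$) followed by the assertion that the result is a special case of Theorems~\ref{m1} and~\ref{m2}. You have done the bookkeeping carefully --- the factor $e^{2\delta\log T}=T^{2\delta}$ in the length threshold, the normalization of Haar measure on $M\cong\R/\pi\Z$ yielding $(\theta_2-\theta_1)/\pi$, and the translation of the hypothesis on $\delta$ into the hypothesis $\delta>\max\{n-2,(n-2+\kappa)/2\}$ of Theorem~\ref{m2} with $n=3$ --- and you correctly identify that the only genuine content left is the passage from continuous (resp.\ smooth) class functions to the indicator of an arc, which the paper leaves to the reader. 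Your sandwiching argument, with Sobolev norm $O(\e^{-k})$ for the mollified indicators and the choice $\e=T^{-\alpha}$ balancing $O(\e\cdot\op{li}(T^{2\delta}))$ against $O(\e^{-k}T^{2\delta-2\eta})$, is exactly the standard way to make this rigorous and produces a valid power saving $\e_0>0$. No gap.
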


For a hyperbolic element $\gamma\in \G$, the length
 of the corresponding geodesic 
 is $2\log |\lambda_\gamma |$ and the argument of $\lambda_\gamma$ is precisely the holonomy 
 associated to $\gamma$. %Since the map $[\gamma]\to \lambda_\gamma\in \c$ is 
 %injective on the set of conjugacy classes of hyperbolic elements of $\Gamma$, 
Hence Theorem \ref{m3} is a special case of Theorems \ref{m1} and \ref{m2}.

    %\begin{thm}\label{m3}
%Let $G=\PSL_2(\c)$ and $\G$ be a geometrically finite and Zariski dense subgroup of $G$.
 %For any $0<\theta_1<\theta_2<\pi$, we have
 %\be \label{pp} \#\{ [\gamma] \in [\Gamma_{ph}]: |\lambda_\gamma|<T, \theta_1< \op{Arg}(\lambda_\gamma )<\theta_2\} \sim 
 %\frac{ (\theta_2-\theta_1)T^{2\delta}}{2\pi \delta \log T} \quad \text{as $T\to \infty$.} \ee 
   %If $\delta >1$ (when $\Gamma$ has no rank $2$ cusp) or $\delta>3/2$ in general, 
   %then \eqref{pp} holds with a polynomial error term $O(T^{2\delta -\e_0})$ for some $\e_0>0$. 
    %\end{thm}

   In the case when $\Gamma$ is contained in an arithmetic subgroup of $\PSL_2(\c)$,
  the polynomial error term can be taken to be uniform over all
congruence subgroups of $\Gamma$; this follows from our approach based on the work
of Bourgain, Gamburd and Sarnak \cite{BGS} and of Mohammadi and Oh \cite{MO}. In the case when $\Gamma\subset
\PSL_2(\mathcal O_D)$ where $\mathcal O_D$ is the ring of integers of an imaginary quadratic extension
 $\q(\sqrt{-D})$ of $\q$, the eigenvalues
of $\G$ are fundamental units of $\mathcal O_D$ (cf. \cite{Sa}), in which case
Theorem \ref{m3} also bears an arithmetic application on the distribution of such fundamental units arising from $\G$.

\medskip

  In proving Theorem \ref{m1}, we consider the following measure $\mu_T$ on the product space $\T^1(X) \times M^{\textsc{c}}$
where $M^{\textsc{c}}$ denotes the space of
conjugacy classes of $M$: for $f\in C(\T^1(X))$ and $\xi\in C(M^{\textsc{c}})$, set
\be \label{deee}\eta_T(f\otimes \xi):=\sum_{C\in  \mathcal G_\G(T)} \mathcal{D}_C (f) \xi(h_C)\ee
where  $\mathcal{D}_C$ denotes the length measure on the geodesic $C$, normalized to be a probability measure.
Theorem \ref{m1} follows if we show that
for any {\it bounded} continuous function $f$ and a continuous function $\xi$,
\be \label{mf} \eta_T(f\otimes \xi) \sim \frac{e^{\delta T}\cdot  m^{\BMS}(f) \cdot \int_M \xi dm}{\delta\cdot |m^{\BMS}|\cdot T } 
\quad\text{as $T\to \infty$}\ee
where $m^{\BMS}$ is the Bowen-Margulis-Sullivan measure
on $\T^1(X)$. 
%To see this, it suffices to note that
% $$m^{\BMS}(1_{\T^1(X)} \otimes \xi)=|m^{\BMS}| \cdot \int_M \xi\; dm$$  for the constant function $1_{\T^1(X)}$.
 
We will deduce \eqref{mf} from the following:
\be \label{mff} \mu_T(f\otimes \xi) \sim \frac{e^{\delta T} \cdot m^{\BMS}(f) \cdot \int_M \xi dm }{\delta\cdot |m^{\BMS}| } 
\quad\text{as $T\to \infty$}\ee
where $\mu_T(f\otimes \xi)=\sum_{C\in  \mathcal G_\G(T)} \mathcal{L}_C (f) \xi(h_C)$
for  $\mathcal{L}_C=\ell(C) \cdot \mathcal{D}_C$ (the length measure on $C$).

Let $N^+$ and $N^-$ denote the expanding and contracting horospherical subgroups
of $G$ with respect to $A$, respectively.
 In studying \eqref{mff}, the following $\e$-flow boxes play an important role: for $g_0\in G$, set
\be\label{eq:G-ee}
\mathfrak{B}(g_0,\e)=g_0 (N^+_\e N^-\cap N^-_\e N^+AM) M_\e A_\e .
\ee where $A_\e$ (resp. $M_\e$) is the $\e$-neighborhood of $e$ in $A$ (resp. $M$)
and $N^{\pm}_\e$ denotes the $\e$-neighborhood of $e$ in $N^{\pm}$. 
Let $\tilde \fG(g_0,\e)$ denote
the image of $\mathfrak{B}(g_0,\e)$ under the canonical projection $G\to \G\ba G/M$.
Fixing a Borel subset $\Omega$ of $M$ which is conjugation-invariant,
the main idea is to relate the restriction of $\mu_T$ to  $\tilde \fG(g_0,\e) \otimes \Omega$
with the counting function of the set $\G\cap  \fG(g_0,\e)A_T^+\Omega \fG(g_0,\e)^{-1}$ with 
$A_T^+=\{a_t:0<t\le T\}$ (see Comparison lemma \ref{comp}); 
we establish this relation using the effective closing lemma \ref{eecl}. We remark that for the effective closing lemma, it is quite essential to
use a flow box which is precisely of the form given in \eqref{eq:G-ee}. This flow box was
first used in Margulis' work on counting closed geodesics \cite{M_T}.  The counting function
 of $\G\cap  \fG(g_0,\e)A_T^+\Omega \fG(g_0,\e)^{-1}$  can then be understood 
based on the mixing result of Winter \cite{Wi}, which says that the $A$ action on $L^2(\G\ba G, m^{\BMS})$ is mixing.
   An effective mixing statement for the cases mentioned in Theorem \ref{m2} was obtained in \cite{MO}. We also remark that if
   we restrict ourselves only to those $f$ with compact support, then
   \eqref{mff} holds for any discrete subgroup $\G$ admitting a finite BMS measure; that is, $\G$ need not be geometrically finite.

\medskip

\noindent{\bf Acknowledgement}
We would like to thank Dale Winter for helpful comments on the preprint.  We also thank the referee for the careful reading of our manuscript
and helpful comments.
\section{Preliminaries}\label{sec:notation}
Throughout the paper, let $G$ be a connected simple real linear Lie group of real rank one. 
%We will fix a linear  embedding of $G$ into $\GL_{\ell}(\br) $ in a way that $G$ is invariant under the map $g\mapsto {}^tg^{-1}$. 
As is well known, $G$
is one of the following type: $\SO(n,1)^\circ$, $\op{SU}(n,1)$, $\op{Sp}(n,1)$ ($n\ge 2$) and $\op{F}_{4}^{-20}$,
which are the groups of isometries of the hyperbolic spaces
 $\mathbb H_{\br}^n$,  $\mathbb H_{\c}^n$, $\mathbb H_{\mathbb H}^n$, $\mathbb H_{\mathbb O}^2$ respectively.
 Let $K$ be a maximal compact subgroup of $G$.
Then $\tilde X:=G/K$ is a symmetric space of rank one. Let $o\in \tilde X$ be the point which is stabilized by $K$.
  The killing form on the Lie algebra of $G$
endows a left $G$-invariant metric $d_{\tilde X}$ on $\tilde X$ which we normalize so that the maximum sectional curvature is $-1$.

The volume entropy $D(\tilde X)$ of $\tilde X$ is defined by
\be\label{ve1}D(\tilde X)=\lim_{T\to \infty}\frac{\log \text{Vol}(B(o, T))}{T}\ee
where $B(o, T)=\{x\in \tilde X: d_{\tilde X}(o, x)\le T\}$.
It is explicitly given as follows:
\be \label{ve2} D(\tilde X)= n-1,2n, 4n+2, 22\ee respectively
for $\SO(n,1)^\circ$, $\op{SU}(n,1)$, $\op{Sp}(n,1)$ and $\op{F}_{4}^{-20}$.
 
We denote by $\partial_\infty(\tilde X)$ the geometric boundary of $\tilde X$ and by $\T^1(\tilde X)$ the unit tangent bundle
of $\tilde X$. Fixing
a vector $v_o\in \T^1(\tilde X)$ based at $o$,   $\T^1(\tilde X)$ 
 can be identified with $G/M$ where $M$ is the stabilizer of $v_o$ in $G$.
 For  a vector $v\in \T^1(\tilde X)$, we denote by $v^+ \in\partial_\infty(\tilde X) $ and $ v^-\in\partial_\infty(\tilde X)$
 the forward and the backward end points of the geodesic determined by $v$.  For $g\in G$,
 we set $g^{\pm}=(gv_o)^{\pm}$.
There exists  a one parameter subgroup $A=\{a_t: t\in \br\}$ of diagonalizable elements of $G$
which commutes with $M$ and whose right translation action on $G/M$ by $a_t$ corresponds to the geodesic flow
for time $t$ on $\T^1(\tilde X)$; in fact, $M$ is equal to the centralizer of $A$ in $K$.
We set $$A^+:=\{a_t: t>0\}\quad \text{ and }\quad A_T^+:=\{a_t: 0<t\le T\}.$$
We denote by $N^+$ and $N^-$ the expanding and contracting horospherical subgroups:
$$
N^+=\{g\in G: \mbox{$\lim_{t\to +\infty}a_{t}ga_{-t}\to e$}\};$$
$$N^-=\{g\in G: \mbox{$\lim_{t\to +\infty}a_{-t}ga_{t}\to e$}\}. $$

The stabilizer of $v_o^+$ and $v_o^-$ in $G$
 are given respectively by $$P^-:=MAN^-,\quad\text{and}\quad P^+:=MAN^+ .$$
 Hence the orbit map $g\mapsto gv_o^{+}$ (resp. $g\mapsto gv_o^-$) induces a homeomorphism
 between $G/P^-$ (resp. $G/P^+$) with $\partial_\infty(\tilde X)$.  

%\begin{rmk}\rm We use the superscripts ${}^+$ and ${}^-$ quite heavily throughout this paper.
%Our convention is that for a single element $g\in G\cup G/M$, $g^{\pm}$ 
%are always understood as points on the boundary $\partial(\tilde X)$ determined by the geodesic given by $g$.
%For subgroups $N^{\pm}, P^{\pm}$ and a semi-subgroup $ A^+$, they are always defined as above.
%\end{rmk}

Let $d=d_G$ be a left $G$-invariant Riemannian metric on $G$ which induces the
metric $d_{\tilde X}$ on $\tilde X=G/K$.
% $$d_{\tilde X}(g_1(o), g_2 (o))=\min_{k_1, k_2\in K}d_{G}(g_1 k_1, g_2 k_2).$$
  For a subset $S$ of $G$ and $g_0>0$, we set 
$$S_\e(g_0):=\{s\in S: d_G(g_0, s)<\e\}$$ the intersection of
the $\e$-ball at $g_0$ with $S$.
Hence the $\e$-balls $G_\e(g_0)$ form a basis of open neighborhoods at $g_0$.

%The $\e$-ball centered at $x$ in $\tilde X=G/K$ will be denoted by $B_\e(x)$;
%We also set for $T>0$ and $x\in \tilde X$:
%$$B_T(x)=\{g\in G: d(g(x),x)\le T\}.$$

\medskip

\noindent{\bf Flow box:}
Following Margulis~\cite{M_T}, we will
define the flow-box around $g_0\in G$ for all small $\e>0$. For this, we will use the following
$\e$-neighborhoods of $e$ in $N^+, N^-, A, M$.

 The groups $N^{\pm}$ are connected unipotent groups and hence the exponential map $\exp: \Lie(N^{\pm})\to N^{\pm}$ is
 a diffeomorphism. For $\e>0$, we set
  $$N^{\pm}_{\e}:= \{n_x^{\pm}:=\exp x \in N^{\pm}:  \|x\|<\e\}$$
  where $\|x\|$ denotes a norm on the real vector space $\Lie(N^{\pm})$ which is
  $M$-invariant under the adjoint action of $M$ on $\Lie(N^{\pm})$.
  
 For $A$ and $M$, we simply put $$A_\e= A\cap G_\e(e)=\{a_t: t\in (-\e, \e)\}, \text{ and } M_\e=M\cap
 G_\e(e).$$

We now define the $\e$-flow box $\mathfrak{B}(g_0,\e)$ at $g_0$ as follows:
\be\label{eq:G-e}
\mathfrak{B}(g_0,\e)=g_0 (N^+_\e N^-\cap N^-_\e N^+AM) M_\e A_\e .
\ee
For simplicity, we set $ \mathfrak{B}(\e):=\mathfrak{B}(e,\e)$.
The product maps $N^+\times A\times M\times N^- \to G$ and
  $N^-\times A\times M\times N^+ \to G$ are diffeomorphisms 
 onto Zariski open neighborhoods of $e$ in $G$. Therefore  
  the sets $\fG(g_0,\e)$, $\e>0$ form a basis of neighborhoods of $g_0$ in $G$.
  
We remark that
this definition of the flow box is quite essential in our proof of the effective closing lemma \ref{eecl}.
We list the following properties of the flow box which we will use later:
\begin{lem}[Basic properties of the flow box]\label{box} 
Let $g_0\in G$ and $\e>0$.
\begin{enumerate}
\item For any $g\in \mathfrak{B}(g_0,\e)$,
the set $\{t\in \br: ga_t \in  \mathfrak{B}(g_0,\e)\}$ is of Lebesgue length $2\e$;
\item $\mathfrak{B}(g_0,\e)v_o^+ =g_0 N_\e^- v_o^+$ and 
$\mathfrak{B}(g_0,\e)v_o^- =g_0 N_\e^+ v_o^-$;
\item There exists $c>1$ such that 
\be\label{comp'}
G_{c^{-1} \e} (g_0) \subset \mathfrak B(g_0, \e)\subset G_{c \e}(g_0) ; \ee
here $c$ is independent of $g_0\in G$ and all small $\e>0$.
\end{enumerate}
\end{lem}

Considering the action of $g\in G$ on the compactification $\tilde X\cup \partial(\tilde X)$, $g$ is called
{\it elliptic}, {\it parabolic}, {\it hyperbolic} if
the set $\text{Fix}(g)=\{x\in\tilde X\cup \partial(\tilde X): g(x)=x\}$
 of fixed points by $g$ is contained in $\tilde X$, is a singleton on $\partial(\tilde X)$, and
consists of two distinct points on $\partial(\tilde X)$ respectively.
Any element $g$ in a rank one Lie group is one of these three types.

Equivalently, $g\in G$ is {\it elliptic} if $g$ is conjugate to an element of $K$, and
{\it parabolic} if $g$ is conjugate to an element of $MN^+ -M$, and {\it hyperbolic}
if $g$ is conjugate to an element of $A^+M-M$.

 \begin{lem} \label{uq} Suppose that for some $h\in G$,
  $ha_1 m_1 h^{-1}= a_2 m_2$ with $a_1,a_2\in A^+$ and $m_1, m_2\in M$.
  Then $a_1=a_2$, $m_1=m m_2m^{-1}$ for some $m\in M$ and $h\in AM$.
  \end{lem}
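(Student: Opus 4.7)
The plan is to translate the algebraic identity into a statement about fixed points on the geometric boundary $\partial_\infty(\tilde X)$, read off that $h$ must lie in $P^+\cap P^-=MA$, and then finish by a direct calculation using uniqueness of the $A\times M$ decomposition.

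First I would observe that any element of the form $am$ with $a\in A^+$ and $m\in M$ is hyperbolic. Indeed, since $M$ centralizes $A$ and satisfies $M\subset P^+\cap P^-$, both points $v_o^+$ and $v_o^-$ are fixed by $am$; moreover the derivative of the forward flow of $am$ at $v_o^+$ is a strict contraction (the $a$-part dominates, while $m$ acts by an isometric rotation), and similarly it is a strict expansion at $v_o^-$. Hence $v_o^+$ and $v_o^-$ are the attracting and repelling fixed points respectively, and they are the only fixed points of $am$ on $\partial_\infty(\tilde X)$.

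Apply this to $a_1m_1$ and $a_2m_2$. The relation $ha_1m_1h^{-1}=a_2m_2$ forces $h$ to carry the fixed-point set of $a_1m_1$ on $\partial_\infty(\tilde X)$ onto that of $a_2m_2$, preserving the attracting/repelling character. Therefore $h\cdot v_o^+=v_o^+$ and $h\cdot v_o^-=v_o^-$. Since the stabilizers of $v_o^+$ and $v_o^-$ in $G$ are $P^+=MAN^+$ and $P^-=MAN^-$, and $P^+\cap P^-=MA$, I conclude $h\in MA$. Write $h=m_0 a_0$ with $m_0\in M$ and $a_0\in A$ (using $AM=MA$).

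Using that $A$ is abelian and centralizes $M$, a direct computation gives
\[
ha_1m_1h^{-1}=m_0a_0\,a_1m_1\,a_0^{-1}m_0^{-1}=a_1\bigl(m_0m_1m_0^{-1}\bigr).
\]
So $a_2m_2=a_1(m_0m_1m_0^{-1})$. The product map $A\times M\to AM$ is a bijection (since $A\cap M=\{e\}$), hence $a_1=a_2$ and $m_2=m_0m_1m_0^{-1}$, i.e.\ $m_1=m^{-1}m_2m$ with $m=m_0\in M$. Together with $h\in AM$, this yields all three conclusions.

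The only delicate step is the fixed-point analysis of $am\in A^+M$; once that is in place, the rest is just the chain $\mathrm{Fix}\Rightarrow h\in P^+\cap P^-=MA\Rightarrow$ explicit conjugation, which is routine. I expect no serious obstacle.
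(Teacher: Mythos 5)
Your argument is correct and conceptually parallel to the paper's: both reduce to the observation that conjugation by $h$ must preserve the dynamical structure of the hyperbolic element $a_1m_1$, force $h$ into $P^+\cap P^-=MA$, and then finish by the same direct computation using $A\cap M=\{e\}$. The paper phrases the dynamical step algebraically -- it shows that the stable/unstable subgroups $N^\pm(g):=\{q: g^\ell q g^{-\ell}\to e\}$ satisfy $N^\pm(a_im_i)=N^\pm$ and that conjugation intertwines them, so $h$ normalizes both $N^+$ and $N^-$ -- whereas you phrase it geometrically via the attracting/repelling fixed points on $\partial_\infty(\tilde X)$. These are two faces of the same fact, so I would call this the same proof rather than a genuinely different route.

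One small slip to flag: the paper's convention is that $P^-=MAN^-$ stabilizes $v_o^+$ and $P^+=MAN^+$ stabilizes $v_o^-$ (the contracting horospherical subgroup $N^-$ fixes the forward endpoint). You have these swapped. It has no effect on the argument, since in either case $h\in P^+\cap P^-=MA$, but it is worth matching the paper's notation to avoid confusion.
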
  
  \begin{proof}
For $g\in G$, define
$$N^\pm(g)=\{q\in G: g^{\ell }q g^{-\ell }\to e\text{ as $\ell \to \pm \infty$}\}.$$
%\footnote{It is worth mentioning that our definition of expanding and contracting horospherical subgroups
% differs by $\pm$ from the more customary definition}
Putting $g_i=a_im_i\in A^+M$ for $i=1,2$, we have
$N^\pm(g_i)=N^\pm.$ On the other hand, since $g_2=hg_1h^{-1}$,
the above definition implies $N^\pm(g_2)=hN^\pm(g_1)h^{-1}.$
Hence $h$ belongs to the common normalizer of $N^{\pm}$, which
is equal to $P^+\cap P^-=AM$. Therefore $h=am\in AM$.
It now follows that $h a_1m_1h^{-1}=a_1 (mm_1m^{-1}) =a_2 m_2$. Hence $a_2^{-1}a_1\in A\cap M=\{e\}$; so
 $a_1=a_2$, as well as $m_2=mm_1m^{-1}$. \end{proof}

As an immediate corollary, we have:
\begin{cor} \label{cuq}  If  a hyperbolic element
 $g\in G$ is  of the form:
 \be\label{amnot}  g=h_g a_gm_g h_g^{-1}\ee with  $a_g m_g\in A^+M$, then $a_g$ is
  uniquely determined, $m_g\in M$  is determined unique up to conjugation and
  $R_g:=h_g Av_o$ is independent of the choice of $h_g$. \end{cor}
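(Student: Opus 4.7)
The plan is to derive this directly from Lemma \ref{uq} applied to two competing decompositions of $g$. Specifically, suppose we have
\[
g = h_1 a_1 m_1 h_1^{-1} = h_2 a_2 m_2 h_2^{-1}
\]
with $a_i \in A^+$ and $m_i \in M$. Setting $h := h_2^{-1} h_1$, we obtain
\[
h\, a_1 m_1 \, h^{-1} = a_2 m_2.
\]
Lemma \ref{uq} then immediately gives $a_1 = a_2$, which yields uniqueness of $a_g$; it gives $m_2 = m\, m_1\, m^{-1}$ for some $m \in M$, which yields that $m_g$ is well-defined as an $M$-conjugacy class; and it gives $h \in AM$, so that the ambiguity in $h_g$ lies entirely in $AM$.

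To conclude that $R_g = h_g A v_o$ is independent of the choice of $h_g$, I would write $h_1 = h_2\, b$ with $b = am \in AM$. Since $M$ centralizes $A$ (recall $M$ is the centralizer of $A$ in $K$) and since $m v_o = v_o$ by the very definition of $M$, we have
\[
h_1 A v_o = h_2\, a m\, A v_o = h_2\, a A\, m v_o = h_2\, a A v_o = h_2 A v_o,
\]
as desired.

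There is no real obstacle: the entire content sits in Lemma \ref{uq}, which has already been proved, together with the two defining properties of $M$ (centralizing $A$ and fixing $v_o$). The only care needed is to correctly separate the $A$ and $M$ factors when verifying invariance of $R_g$, but this is a one-line manipulation.
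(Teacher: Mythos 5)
Your proof is correct and matches the paper's intent: the paper labels this an ``immediate corollary'' of Lemma \ref{uq} and gives no further argument, and your derivation—applying Lemma \ref{uq} to $h := h_2^{-1}h_1$, then using that $M$ centralizes $A$ and fixes $v_o$ to see $h_1Av_o = h_2Av_o$—is exactly the intended one-step deduction.
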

  
The geodesic $R_g:=h_gAv_o\subset \tilde X$ is called {\it the oriented axis} of $g$:
$g$ preserves $R_g$,
 and acts as a translation by $T:=d(a_g, e)$. %We will  denote by $[m_g]$ the conjugacy class of $m_g$ in $M$.
  \medskip

%\begin{lem} \label{rep} Let $T\ge 2$ and  $\e>0$ be sufficiently small.
%For any $T_1, \cdots, T_k >T$ and $u_1, \cdots, u_{k-1}\in G_\e(e)$, we have
% $$a_{T_1} u_1 a_{T_2} \cdots u_{k-1} a_{T_k} \in G_{O(k \e e^{-T} )}
 % (a_{S_k}  \cdot M)$$
%where $S_k=T_1+\cdots +T_k+O(k\e)$.
%\end{lem}
\medskip

Let $\Gamma$ be a torsion-free and non-elementary discrete subgroup of $G$.
A closed geodesic  $C$ of length $T>0$ on $\T^1( X)=\Gamma\ba G/M$
is a compact set of the form $\G\ba \G g A M/M$ for some $g\in G$ such that
$gAMg^{-1}\cap \Gamma $ is generated by a hyperbolic element $\gamma=g a_\gamma m_\gamma g^{-1}$
with $T=d(a_\gamma, e)$.  The conjugacy class $[m_\gamma]$ in $M$ is called the holonomy class attached to $C$.
Note that if we have $$\G\ba \G g m_0 a_T  =\G \ba \G g m_0 m$$ for some $m_0,m\in M$, then
 $[m]=[m_\gamma]$.
Geometrically, $\G\ba \G g  m_0 $ is a frame which contains 
the tangent vector $\G\ba \G g M$, 
and the element $m$
measures the extent to which parallel transport around the closed geodesic
$\G g m_0a_T$ differs from the original frame $\G \ba \G g m_0$.
If we choose a different base point $m_1$ from $m_0$, then $m$ changes by
a conjugation; hence the holonomy class attached to $C$ is well-defined.

\section{Effective closing lemma}\label{sec:closing-lemma}
Let $\G$ be a torsion-free, non-elementary and  discrete subgroup of $G$ and 
set $X:=\G\ba G/K$, which is a rank one locally symmetric manifold whose fundamental group
is isomorphic to $\Gamma$. We denote by $\pi:
G\to\G\ba G$ the canonical projection map.

 For two elements $h_1, h_2\in G$, we will write $h_1\sim_\e h_2$
 if  $d_G(h_1, h_2)\le  \e$ and $h_1\sim_{O(\e)} h_2 $ if $d_G(h_1, h_2)\le c \e$ for
 some constant $c>1$ depending only on $G$. For conjugacy classes $[m_1]$ and $[m_2]$ in $M$,
 we write  $[m_1]\sim_{\e} [m_2]$ and
  $[m_1]\sim_{O(\e)} [m_2]$ if, respectively, $m_1\sim_{\e} m_2$ and
  $m_1\sim_{O(\e)} m_2$ for some representatives $m_i\in M$ of $[m_i]$.

 For a subset $S$ of $G$ and $\e>0$,
we also use the notation $G_{O(\e)}(S)$ for the $c\e$-neighborhood of $S$ for some $c>1$ 
depending only on  $G$, and the notation $1_S$ for the characteristic function of $S$.

For $g_0\in G$,
we will define the injectivity radius of $g_0$ in $\G\ba G$ 
to be the supremum $\e>0$ such that the $\e$ flow box $\fG (g_0,\e)$ injects to $\G\ba G$. In what follows, we will consider boxes $\fG (g_0,\e)$ only for those $\e$ which are smaller than the injectivity radius of $g_0$,
without repeatedly saying so.

\medskip

\begin{figure}\label{closing}
  \begin{center}
   \includegraphics[width=1in]{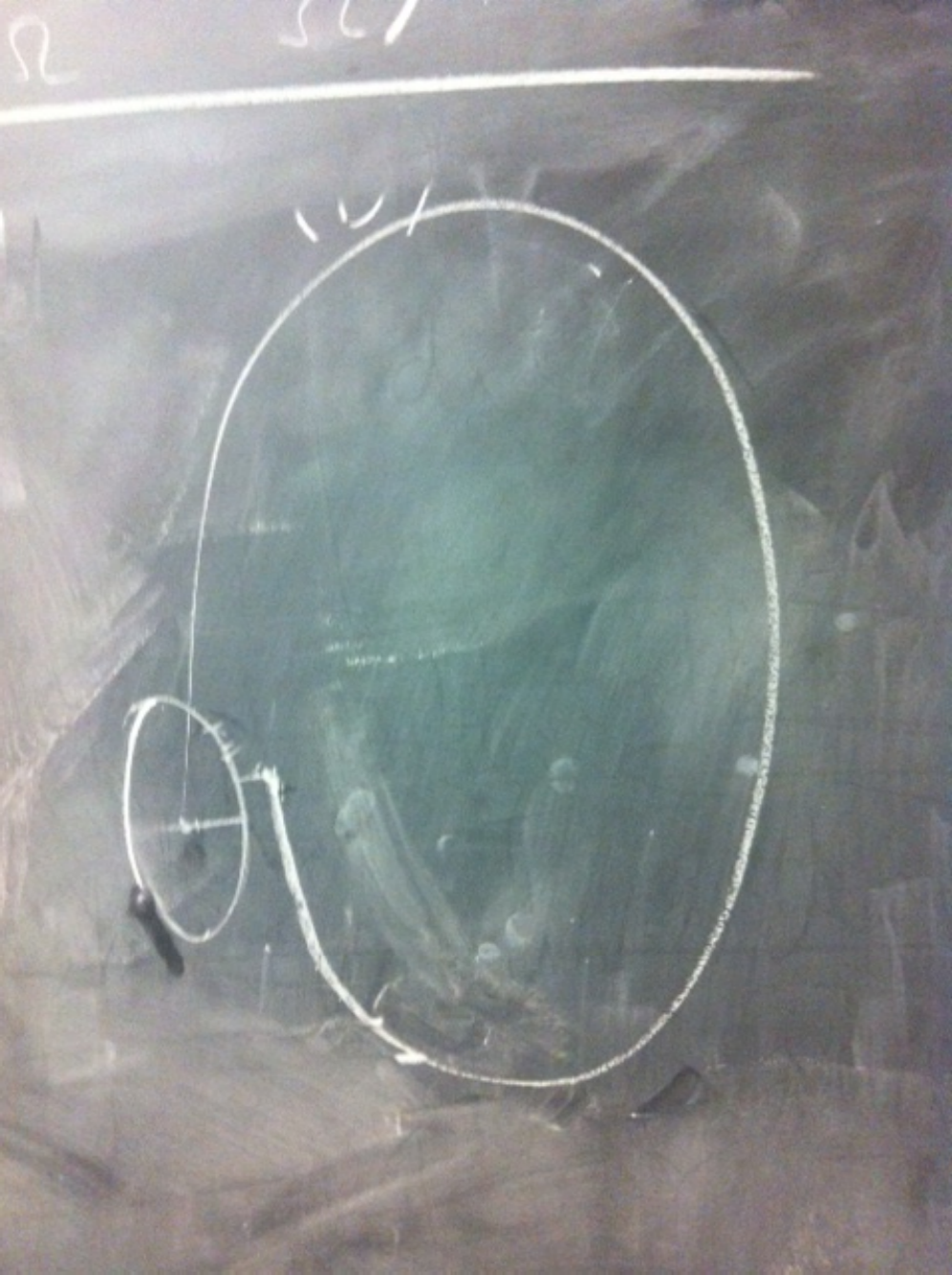}
 \end{center}\caption{Pictorial proof of Closing lemma}
\end{figure}

In this section, we consider the situation where a long geodesic comes back to a fixed $\e$-box $\pi(\fG (g_0,\e))$,
 that is, there exist $g_1, g_2\in \fG (g_0,\e)$ such that 
$$g_1 \tilde a_\gamma \tilde m_\gamma=\gamma g_2$$
for some $\gamma \in \G$ and $\tilde a_\gamma \tilde m_\gamma\in AM$ with $T:=d(\tilde a_\gamma , e)$ sufficiently large. 
The so-called closing lemma for a negatively curved space 
(see~\cite[Lemma 6.2]{M_T} and~\cite[Chapter 5]{R_T}) says that there is a closed geodesic nearby; more precisely,
 $\gamma$ is a hyperbolic element and its oriented axis $R_\gamma$
is nearby the box $\pi(\fG (g_0,\e) )$ in the space $\Gamma\ba G/M=\T^1(X)$.
We will need  more detailed information on this situation. We will show that
the oriented axis $R_\gamma$ passes through $O(\e e^{-T})$-neighborhood of the box  $\pi(\fG (g_0,\e) )$
and  $\tilde a_\gamma$ and $[\tilde m_\gamma]$  are $O(\e )$-close to $a_\gamma$ and $[m_\gamma]$ respectively
where $a_\gamma$ and $[m_\gamma]$ are
defined as in \eqref{amnot} for $\gamma$.

%The main ingredient is the following lemma:
\begin{lem}[Effective closing lemma I] \label{eecl}
There exists $T_0\gg 1$, depending
only on $G$, for which the following holds:
For any $g_0\in G$ and any small $\e>0$, 
 suppose that there exist $g_1, g_2\in  \fG (g_0,\e)$ and $\gamma \in G$ 
 such that 
 \be\label{eq:psi-gamma'}g_1 \tilde a_\gamma
  \tilde m_\gamma =\gamma g_2\ee
  for some $\tilde a_\gamma\in A$ with $T:=d(\tilde a_\gamma, e)\ge T_0$
  and $\tilde m_\gamma\in M$.
 
Then there exists $g\in \fG(g_0, \e +O(\e e^{-T}))$ such that
$$\gamma = g a_\gamma m_\gamma g^{-1}.$$
Moreover, $a_\gamma \sim_{O(\e )} \tilde a_\gamma$, and
 $[m_\gamma] \sim_{O(\e )} [\tilde m_\gamma]$.
\end{lem}

\begin{proof} The proof is divided into two parts.

\noindent{\bf Step 1:}
We will show that for some $g_3:=g_0h_\e\in \fG(g_0,\e)$, 
\be\label{ggg} g_3^{-1} \gamma g_3 =n_w^+ a_\gamma'  m_\gamma' n_z^-\ee
where 
$a_\gamma' m_\gamma'\in
\tilde a_\gamma \tilde m_\gamma A_{O(\e)} M_{O(\e)}$,
$n_w^+\in N^+_{O(e^{-T}\e)}$ and $n_z^-\in N^-_{O(e^{-T}\e)}$.
%Indeed, this is the main part of the proof, and the claims (1)-(4) will be deduced from \eqref{ggg}.

 To prove this claim, note that there exist $b_\e, d_\e \in \fG_\e(e)$ such that
 $g_1=g_0b_\e$ and $g_2=g_0d_\e$.
Recalling the definition $$ {\fG}(\e)= (N^+_\e N^-\cap N^-_\e N^+AM) M_\e A_\e ,$$
we may write $b_\e$ and $d_\e$ as follows:
$$b_\e=b_\e^+ n_x^-  b_\e^0 \in N_\e^+ N^- (A_\e M_\e); $$
 $$d_\e =d_\e^- n_{ y}^+ d_\e^0 \in  N^-_\e N^+ (A M).$$
 By Lemma \ref{box}, we have $n_x^-\in N^-_{O(\e)}$, $n_y^+\in N^+_{O(\e)}$
 and $ d_\e^0\in A_{O(\e)}M_{O(\e)}$.
Now the equality $g_1\tilde a_\gamma \tilde m_\gamma=\gamma g_2$
can be rewritten as
\be\label{gg}  g_0 b_\e^+ n_x^- \tilde a_\gamma^{(1)} \tilde m_\gamma^{(1)}
= \gamma g_0 d_\e^- n_y^+\ee
where $a_\gamma^{(1)} \tilde m_\gamma^{(1)}:=
b_\e^0  \tilde a_\gamma \tilde m_\gamma (d_\e^0)^{-1}\in AM$.

By the transversality between $N^-$ and $AMN^+$,
we obtain a unique element $n^-_{x'}\in N^-_{O(\e)}$ satisfying that
\be \label{bh} b_\e^+ n^-_{x'}\in  d_\e^-    (N^+_{O(\e)}A_{O(\e)}M_{O(\e)}).\ee
Since $b_\e^+\in N_\e^+$ and $d_\e^-\in N_\e^-$,
we have
$$h_\e:=b_\e^+ n^-_{x'}\in (N_{\e}^+ N^- \cap N_\e^-N^+AM) \subset \fG(\e)$$
and hence $$g_0h_\e\in \fG(g_0,\e).$$

Now setting $g_3:=g_0h_\e$,
we claim that \eqref{ggg} holds. By \eqref{bh}, $h_\e= d_\e^- n_{v}^+ a_\e m_\e 
\in d_\e^-   (N^+_{O(\e)}A_{O(\e)}M_{O(\e)})$.
Rewriting \eqref{gg}, we have
$$g_0 h_\e (n_{x'}^-)^{-1} n_x^- \tilde a_\gamma^{(1)} \tilde m_\gamma^{(1)}
=\gamma g_0 h_\e m_\e^{-1}  a_\e^{-1} (n_v^+)^{-1} n_y^+   $$
and hence
\begin{align*} g_3^{-1} \gamma g_3& =
 (n_{x'}^-)^{-1} n_x^- \left( \tilde a_\gamma^{(1)} \tilde m_\gamma^{(1)}
 a_\e m_\e\right)  \left( a_\e^{-1}m_\e^{-1} (n_y^+)^{-1}n_v^+ a_\e m_\e \right) 
 \\&=n_{z_1}^- \tilde a_\gamma^{(2)} \tilde m_\gamma^{(2)}n_{w_1}^+
  \end{align*}  
where $a_\gamma^{(2)} \tilde m_\gamma^{(2)}:=  \tilde a_\gamma^{(1)} \tilde m_\gamma^{(1)}
 a_\e m_\e$, $n_{z_1}^-:= (n_{x'}^-)^{-1} n_x^-\in N^-_{O(\e)}$, and
$n_{w_1}^+:=a_\e^{-1}m_\e^{-1} (n_y^+)^{-1} n_v^+ a_\e m_\e \in N^+_{O(\e)} $.

We have $n_{z_2}^-:=  (a_\gamma^{(2)} \tilde m_\gamma^{(2)})^{-1}
n_{z_1}^- \tilde a_\gamma^{(2)} \tilde m_\gamma^{(2)}\in N^-_{O(e^{-T}\e)}$
and we can write 
$$n_{z_2}^- n_{w_1}^+= n_{w_2}^+a_\e' m_\e' n_{z_3}^-
 \in N^+_{O(\e)}A_{O(\e)} M_{O(\e)} N^-_{O(e^{-T}\e )}.$$

Therefore 
$$g_3^{-1} \gamma g_3 = \tilde a_\gamma^{(2)} \tilde m_\gamma^{(2)} n_{z_2}^-n_{w_1}^+
= n^+_{w_3} \tilde a_\gamma^{'} \tilde m_\gamma^{'} n_{z_3}^-
$$
where $a_\gamma' m_\gamma':=\tilde a_\gamma^{(2)} \tilde m_\gamma^{(2)}
 a_\e' m_\e'$, 
and $n^+_{w_3} =\tilde a_\gamma^{(2)} \tilde m_\gamma^{(2)}
n_{w_2}^+  (a_\gamma^{(2)} \tilde m_\gamma^{(2)})^{-1}\in N^+_{O(e^{-T}\e)}$.
This proves the claim \eqref{ggg}.

\medskip
\noindent{ \bf Step 2:} Set $g:=g_3^{-1} \gamma g_3$ so that
$g=n_w^+ a_\gamma'  m_\gamma' n_z^-$. We claim that
\be\label{rgg} g\in (n_x^+ n_{y}^- ) a_\gamma' A_{O(\e)} m_\gamma' M_{O(\e)}  (n_x^+ n_{y}^- )^{-1}\ee
with $n_{x}^+\in N^+_{O( \e e^{-T})}$ and $n_y^-\in N^-_{O(\e e^{-T})}$.

For $n_z^-$ as above, for any $n_x^+\in N^+_{\e}$,
there exists a unique element $n_{\alpha(x)}^+\in N^+_{O(\e)}$ such that
$(n_z^-) n_x^+ \in n^+_{\alpha(x)}A_{\e} M_\e N^-_{\e} $.
Moreover  the map $n_x^+\mapsto n_{\alpha(x)}$ 
is a diffeomorphsim of $N_\e^+$ onto its image, which is contained in $N^+_{O(\e)}$.

Therefore the implicit function theorem implies that
the map $n_x^+ \mapsto n_x^+(a_\gamma ' m_\gamma') (n_{\alpha(x)}^+)^{-1} (a_\gamma ' m_\gamma')^{-1}$
defines a diffeomorphism of $N_\e^+$ onto its image $N_{\e+O(e^{-T}\e)}^+$.
Since $n_w^+\in N^+_{O(e^{-T}\e)}$,
if $T$ is large enough, we can find $n_x^+\in N_{O(e^{-T}\e)}$ such that
$$n_w^+=n_x^+(a_\gamma ' m_\gamma') (n_{\alpha(x)}^+)^{-1} (a_\gamma ' m_\gamma')^{-1}.$$
Fixing this element $n_x^+$, we write $(n_{\alpha(x)}^+)^{-1} n_z^-=a_\e m_\e n_u^- (n_x^+)^{-1}$
with $n_u^-\in N^-_{O(e^{-T}\e)}$, $a_\e\in A_{O(\e)}$ and $m_\e\in M_{O(\e)}$.
Therefore, plugging in these,
\begin{align*}g&=n_w^+ a_\gamma'  m_\gamma' n_z^-\\
&=n_x^+ a_\gamma'  m_\gamma' (n_{\alpha(x)}^+)^{-1} n_z^-\\
&=n_x^+ (a_\gamma''  m_\gamma'') n_u^- (n_x^+)^{-1}
\end{align*}
where $a_\gamma''=a_\gamma'  a_\epsilon$ and $m_\gamma''= m_\gamma'  m_\epsilon$.
Since the map $$n_y^- \mapsto (a_\gamma'' m_\gamma'')^{-1}
  n_y^-(a_\gamma '' m_\gamma'' ) (n_y^-)^{-1} $$
is a diffeomorphism of $N_\e^-$ onto its image $N_{\e+O(e^{-T}\e)}^-$,
for all large $T$, we can find $n_y^-\in N^+_{O(\e e^{-T})}$ such that
$$n_u^-=(a_\gamma '' m_\gamma'')^{-1}
  (n_y^-)^{-1} (a_\gamma ''m_\gamma'' ) n_y^-.$$
This yields
$$g= n_x^+(n_y^-)^{-1} (a_\gamma'' m_\gamma'')  n_y^-  (n_x^+)^{-1}$$
as desired.

Hence  $$\gamma =g_4a_\gamma ''m_\gamma'' g_4^{-1}$$
with $g_4:=g_3 n_x^+ (n_y^-)^{-1}\in \fG(g_0,\e +O(\e e^{-T}))$.
Therefore 
$a_\gamma=a_\gamma '' \sim_{O(\e)} \tilde a_\gamma$
and $[m_\gamma]=[m_\gamma '']\sim_{O(\e)} [\tilde m_\gamma]$.
\end{proof}%The rotational axis of $\gamma$ is $g_4 A(o)$, and hence
%passes through $\fG(\e +O(\e e^{-T}))$.

Although we will only be using the above version of the closing lemma \ref{eecl},
we record the following reformulation as well, which is of more geometric flavor.
%The following closing lemma now immediately follows from the above lemma.
\begin{lem}[Effective closing lemma II]\label{ecl}
There exists $T_0\gg 1$, depending
only on $G$, for which the following holds:
Let $g_0\in G$ and let $\e>0$ be smaller than the injectivity radius of $g_0$ in $\G\ba G$.
 Suppose that there exist $g_1, g_2\in  \fG (g_0,\e)$ and $\gamma\in \G$ 
 such that 
 \be\label{eq:psi-gamma}g_1 \tilde a_\gamma
  \tilde m_\gamma =\gamma g_2\ee
  for some $\tilde a_\gamma\in A$ with $T:=d(\tilde a_\gamma , e)\ge T_0$
  and $\tilde m_\gamma\in M$.
 Suppose also that $\gamma$ is primitive, i.e., $\gamma$ cannot be written as a power
 of another element of $\Gamma$.
Then there exists an element $g_\gamma\in \fG(g_0, \e +O(\e e^{-T}))$ such that
 \begin {enumerate}
\item the $AM$-orbit $\G\ba \G  g_\gamma AM$  is compact;
\item $\gamma$ is a generator of the group $g_\gamma AM g_\gamma^{-1}\cap \G$;
\item the length of the closed geodesic $C_\gamma=\G\ba \G  g_\gamma A(v_o)$
is $T+O(\e)$;
\item the holonomy class $[m_\gamma]$ associated to $C_\gamma$ is
 within $O(\e)$-distance from $ [\tilde m_\gamma]$.
\end{enumerate}
\end{lem}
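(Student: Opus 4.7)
The plan is to deduce this lemma as a direct geometric repackaging of Lemma \ref{eecl}. Applying that lemma to the same data yields an element $g_\gamma \in \fG(g_0, \e + O(\e e^{-T}))$ with $\gamma = g_\gamma a_\gamma m_\gamma g_\gamma^{-1}$, $a_\gamma \sim_{O(\e)} \tilde a_\gamma$, and $[m_\gamma] \sim_{O(\e)} [\tilde m_\gamma]$. Items (3) and (4) then fall out immediately: by Corollary \ref{cuq} the closed geodesic $C_\gamma = \Gamma\backslash\Gamma g_\gamma A(v_o)$ is well-defined, its length is $d(a_\gamma, e) = d(\tilde a_\gamma, e) + O(\e) = T + O(\e)$, and its holonomy class is precisely $[m_\gamma]$, which is already within $O(\e)$ of $[\tilde m_\gamma]$.

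For item (1), the element $\gamma$ lies in $g_\gamma AM g_\gamma^{-1} \cap \Gamma$ by construction. Under the isomorphism $g_\gamma AM g_\gamma^{-1} \cong A \times M$ induced by conjugation by $g_\gamma^{-1}$, the element $\gamma$ corresponds to $a_\gamma m_\gamma$, whose $A$-component $a_\gamma \in A^+$ is non-trivial since $d(a_\gamma, e) = T + O(\e)$ with $T \ge T_0$. Because $M$ is compact and $\langle a_\gamma\rangle$ is cocompact in $A$, the cyclic group $\langle \gamma\rangle$ acts cocompactly on $g_\gamma AM$ by left multiplication, so $\Gamma\backslash \Gamma g_\gamma AM$ is compact.

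For item (2), set $\Delta := g_\gamma AM g_\gamma^{-1} \cap \Gamma$. As the intersection of a discrete subgroup of $G$ with a closed subgroup, $\Delta$ is discrete in $g_\gamma AM g_\gamma^{-1}$; projecting to its $g_\gamma A g_\gamma^{-1} \cong \mathbb{R}$ factor produces a discrete subgroup that is cocompact by item (1), hence infinite cyclic. Thus $\Delta$ is an extension of $\mathbb{Z}$ by a finite subgroup of $M$, and since $\Gamma$ (and therefore $\Delta$) is torsion-free, this forces $\Delta \cong \mathbb{Z}$. The primitivity of $\gamma$ inside $\Gamma$ then identifies it with a generator of $\Delta$.

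Given the strength of Lemma \ref{eecl}, there is essentially no analytic obstacle here: the entire technical content sits in the effective closing estimate already proved, and Lemma \ref{ecl} is merely its translation into statements about the closed geodesic $C_\gamma$ and its holonomy. The only step requiring a small self-contained argument, rather than a direct quotation, is the identification of $\Delta$ as infinite cyclic in item (2), which is standard from discreteness of $\Gamma$ together with its torsion-freeness.
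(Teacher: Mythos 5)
Your proposal is correct. Note that the paper does not actually supply a proof of Lemma~\ref{ecl}; it explicitly states the lemma as a ``reformulation'' of Lemma~\ref{eecl} ``of more geometric flavor'' and leaves the deduction to the reader. Your argument is the natural (and evidently intended) way to fill this in: apply Lemma~\ref{eecl} to produce $g_\gamma$ with $\gamma=g_\gamma a_\gamma m_\gamma g_\gamma^{-1}$, observe that since $a_\gamma\ne e$ the cyclic group $\langle a_\gamma m_\gamma\rangle$ is cocompact in $AM$ (giving (1)), identify $\Delta=g_\gamma AM g_\gamma^{-1}\cap\Gamma$ as infinite cyclic using discreteness, compactness of $M$, and torsion-freeness of $\Gamma$, and invoke primitivity for (2); then (3) and (4) are immediate from the estimates $a_\gamma\sim_{O(\e)}\tilde a_\gamma$ and $[m_\gamma]\sim_{O(\e)}[\tilde m_\gamma]$. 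One small logical point: the length in item~(3) is defined via the covolume of $\Delta$ in $AM$, so strictly speaking item~(2) should be established before item~(3) is read off as $d(a_\gamma,e)$; you prove both, so this is only a matter of presentation order.
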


% \begin {enumerate}
%\item $\gamma$ is a hyperbolic element.
%\item The rotational axis $R_\gamma$ passes through $\fG(g_0, \e +O(\e e^{-T}))$.
%\item $a_\gamma \sim_{O(\e )} \tilde a_\gamma$.
%\item $[m_\gamma] \sim_{O(\e )} [\tilde m_\gamma]$.
%\end{enumerate}
%\end{lem}

\section{Counting results for  $\G\cap \fG (g_0,\e)A_T \Omega \fG(g_0,\e)^{-1}$}\label{sec:bisec-count}\label{sc}
Let $G, \Gamma, X$, $o, v_o$ etc be as in the previous section. Recall $A_T^+=\{a_t: 0<t<T\}$. 
Our approach of understanding the distribution of closed geodesics in $\T^1(X)$
passing through the flow box $\fG(g_0, \e)$ and with holonomy class
contained in a fixed compact subset $\Omega$ of $M$ is to interpret it as a counting problem
for the set $\G\cap \fG (g_0, \e)A_T^+ \Omega \fG(g_0, \e)^{-1}$ as $T\to \infty$.
We will be able to approximate $\# \G\cap \fG (g_0, \e)A_T^+ \Omega \fG(g_0, \e)^{-1}$
 by the counting function for the intersection of $\G$ with a certain compact subset given in the $g_0N^+AMN^-g_0^{-1}$ coordinates.

 In the first part of this section,  we will investigate the asymptotic behavior of the following
$$\# \Gamma\cap g_0\Xi_1 A_T^+\Omega \Xi_2 g_0^{-1} $$
for given bounded Borel subsets $\Xi_1\subset N^+$, $\Xi_2\subset  N^-$ and $\Omega\subset M$.
In the second part, we will use this result to obtain an asymptotic formula of $\# \G \cap
\fG (g_0,\e)A_T^+ \Omega \fG(g_0,\e)^{-1}$.

\subsection{On the counting for $\Gamma\cap g_0\Xi_1 A_T\Omega \Xi_2 g_0^{-1} $}This problem can be answered under the extra assumption that $\G$ is Zariski dense
and that the Bowen-Margulis-Sullivan measure, the BMS measure for short,
 on $\T^1(X)=\Gamma\ba G/M$ is {\it finite}.
The key ingredient in this case is that the $M$-invariant extension of the BMS measure on
$\G\ba G$ is mixing for the $A$-action.

We begin the discussion by recalling the definition of the BMS measure.
Let $\Lambda(\G)$ denote the limit set of $\Gamma$, which  is the set of all accumulation points
in $\tilde X\cup \partial(\tilde X)$ of an orbit of $\Gamma$ in $\tilde X$.
 Denote by $\delta=\delta_\G$ the critical exponent of $\Gamma$.
Denote by $\{\nu_x: x\in \tilde X\}$ a $\G$-invariant conformal density of dimension $\delta$ 
 supported on the limit set $\Lambda(\G)$; such a density exists by the construction given by Patterson \cite{Pa}.
For $\xi_1\ne \xi_2\in \partial(\tilde X)$, and $x\in \tilde X$,
we denote by  $\la \xi_1, \xi_2\ra_x$ the Gromov product at $x$. Then
the visual distance on $\partial(\tilde X)$ at $x$ is given by
 $$d_x(\xi_1, \xi_2)=e^{-\la \xi_1 , \xi_2\ra_x}$$
with the convention that $d_x(\xi, \xi)=0$. The Hopf parametrization of $\T^1(\tilde X)$
 as $(\partial^2 (\tilde X) \setminus \text{Diagonal})\times \br$ is given by
 $u\mapsto (u^+, u^-, s=\beta_{u^-}(o, u))$ where $\beta_{\xi}(x,y)$ denotes the Busemann function for $\xi\in \partial(\tilde X)$,
and $x,y\in \tilde X$.
The BMS measure
on $\T^1(\tilde X)$ with respect to $\{\nu_x\}$ is defined as follows:
 $$d \tilde m^{\BMS}(u)= \frac{d\nu_x(u^+) d\nu_x(u^-) ds}{d_x(u^+, u^-)^{2\delta}}.$$
 
 The definition is independent of $x\in \tilde X$ and $\tilde m^{\BMS}$ is right $A$-invariant and left $\G$-invariant, and hence
 induces a geodesic flow invariant Borel measure on $\T^1(X)$, which we denote by $m^{\BMS}$.
If $|m^{\BMS}|<\infty$, then  the geodesic flow is ergodic with respect to $m^{\BMS}$, as shown by Sullivan
\cite{Sullivan1979} and moreover mixing by Babillot \cite{Bab}.

As we are eventually interested in counting a $\G$ orbit in a family $\Xi_1A_T^+ \Omega \Xi_2$ 
with $\Omega$ any Borel subset in $M$, we need to understand the mixing phenomenon for
the $A$-action on $\Gamma\ba G$, not only on $\Gamma\ba G/M$.
By abuse of notation, we denote by $m^{\BMS}$  the $M$-invariant lift of $m^{\BMS}$ to $\Gamma\ba G$.
Winter \cite{Wi} showed that if $\G$ is Zariski dense and $|m^{\BMS}|<\infty$, then
the $A$-action on $\Gamma\ba G$ is mixing for this extension $m^{\BMS}$; this was earlier claimed in \cite{FS} for
the case of $G=\SO(n,1)^\circ$ and $\G$ geometrically finite.

In the rest of this section, we assume that
$$\text{$\G$ is Zariski dense and $|m^{\BMS}|<\infty$}.$$
For the application of the mixing in counting problems, it is easier to use
the following version on the asymptotic behavior of the matrix coefficients in Haar measure.
 To state this result, we need to recall the Burger-Roblin measures for the $N^+$ and $N^-$ actions.

%For instance, any geometrically finite group $\G$ satisfies the condition
%$|m^{\BMS}|<\infty. $

 Using the homemorphism of
 $G$ with $K/M\times M\times A\times N^{\pm}$,
we define the Burger-Roblin measures $\tilde m^{\BR}$ (invariant under the $N^+$-action) and
$\tilde m_*^{\BR}$ (invariant under the $N^-$-action) on $ G$ as follows: 
\be\label{brs} d\tilde m^{\BR}(kma_r n^+)=e^{-\delta r} dn^+drd\nu_o(kv_o^-) dm \;\;\text{ for $k m a_rn^+\in (K/M) MAN^+$};\ee
\be \label{brf} d\tilde m_*^{\BR}(kma_r n^-)=e^{\delta r} dn^-drd\nu_o(k v_o^+)dm \;\; \text{for $km a_rn^-\in (K/M) MAN^-$}\ee
where $dm$ denotes the $M$-invariant probability measure on $M$;
Since $M$ fixes $v_o$ and hence  fixes $v_o^{\pm}$, these measures are well-defined.
The Haar measure on $G$ is given by: for $g=a_sn^{\pm} k\in AN^{\pm}K$,
$$dg=d\tilde m^{\Haar}(a_sn^{\pm} k)= dsdn^{\pm}dk$$ where $dk$ is the probability Haar measure
on $K$.
 These measures are all left $\G$-invariant and we use the notations $m^{\BR}$, $m_*^{\BR}$, $m^{\Haar}$ (or $dg$) respectively  for
the corresponding induced right $M$-invariant measures on $\G\ba G$.

The following theorem can be deduced from the mixing of $m^{\BMS}$, as observed first by Roblin for $M$-invariant functions (\cite{R_T},
see also \cite{OS}). 
\begin{thm} $($\cite{R_T}, \cite{OS}, \cite{Wi}$)$ \label{lo} \label{harmixing} 
For any functions $\Psi_1,\Psi_2\in C_c(\G\ba G)$,
$$\lim_{t\to +\infty}
e^{(D -\delta)t} \int_{\G\ba G}
 \Psi_1(ga_t) \Psi_2(g) dg  = \frac{m^{\BR}(\Psi_1)\cdot m^{\BR}_*(\Psi_2)}{|m^{\BMS}|} $$
where $D=D(\tilde X)$ is the volume entropy of $\tilde X=G/K$ (see \eqref{ve1} and \eqref{ve2}).
\end{thm}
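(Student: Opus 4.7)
The plan is to deduce this matrix-coefficient asymptotic from the BMS mixing theorem of Winter \cite{Wi}, namely that $\int_{\G\ba G} \phi_1(ga_t)\phi_2(g)\,dm^{\BMS}(g) \to m^{\BMS}(\phi_1)\,m^{\BMS}(\phi_2)/|m^{\BMS}|$ for $\phi_1,\phi_2\in C_c(\G\ba G)$, by means of a horospherical thickening argument in the spirit of Roblin \cite{R_T} and Oh-Shah \cite{OS}. The prefactor $e^{(D-\delta)t}$ in the statement records the precise mismatch between Haar and BMS along the stable horospherical direction $N^-$: the Jacobian of $\Ad(a_{-t})$ on $\Lie(N^-)$ equals $e^{-Dt}$, so Haar on $N^-$ is stretched by $e^{Dt}$ under $g\mapsto ga_t$, whereas the Patterson--Sullivan measure on an $N^-$-leaf is only stretched by $e^{\delta t}$.

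To make this concrete, I would work in local Bruhat coordinates on the flow box $\fG(g_0,\e)$, writing each $g$ uniquely as $g_0\,n^+ a_s m\, n^-$ with small parameters. In these coordinates the Haar measure factorizes as $dg = dn^+\,dn^-\,ds\,dm$, and the Hopf parametrization combined with \eqref{brs}--\eqref{brf} gives
\[
dm^{\BMS}\;=\;e^{\delta\beta_{g^+}(o,go)+\delta\beta_{g^-}(o,go)}\,d\mu^{\PS}_{N^+}(n^+)\,d\mu^{\PS}_{N^-}(n^-)\,ds\,dm,
\]
with $dm^{\BR}$ (resp.\ $dm^{\BR}_*$) obtained from this formula by replacing the factor $d\mu^{\PS}_{N^+}$ by Haar $dn^+$ (resp.\ $d\mu^{\PS}_{N^-}$ by $dn^-$). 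Given $\Psi_1,\Psi_2\in C_c(\G\ba G)$, I would reduce via a partition of unity to the case where each $\Psi_i$ is supported in such a box and construct horospherical thickenings $\phi_1^\e$ (along $N^+$) and $\phi_2^\e$ (along $N^-$) by convolving $\Psi_i$ against smooth bump functions of scale $\e$ on the appropriate horosphere, weighted by a smooth approximation of the relevant Busemann exponential. The near-constancy of $e^{\delta\beta_\pm}$ on a box of diameter $\e$ (H\"older regularity of the Busemann cocycles) combined with the product formula above yields
\[
m^{\BMS}(\phi_1^\e) = m^{\BR}(\Psi_1) + O(\e),\qquad m^{\BMS}(\phi_2^\e) = m^{\BR}_*(\Psi_2) + O(\e).
\]
A change of variables in the $N^-$-coordinate---absorbing the $\Ad(a_{-t})$-conjugation produced by right-multiplication by $a_t$---then shows
\[
\int_{\G\ba G}\phi_1^\e(ga_t)\phi_2^\e(g)\,dm^{\BMS}(g) = e^{(D-\delta)t}\int_{\G\ba G}\Psi_1(ga_t)\Psi_2(g)\,dg + O(\e)
\]
for all large $t$, the $e^{(D-\delta)t}$ being exactly the imbalance between the Haar Jacobian $e^{Dt}$ and the PS scaling $e^{\delta t}$ on $N^-$. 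Applying Winter's BMS-mixing to $\phi_1^\e$ and $\phi_2^\e$, dividing by $e^{(D-\delta)t}$, and letting $\e\to 0$ after $t\to\infty$ yields the stated limit.

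The main obstacle will be making these thickening estimates uniform in $t$ when $\G$ is merely geometrically finite rather than convex cocompact, since the supports of $\Psi_1,\Psi_2$ may reach into cusp neighborhoods of $\G\ba G$ where the injectivity radius degenerates. My plan is to truncate $\Psi_i$ to the thick part of $\G\ba G$ before thickening and to use $|m^{\BMS}|<\infty$ (Sullivan) to show that the contribution of the thin parts to both sides is $o(1)$ uniformly in $t$, then exhaust $\G\ba G$ by thick parts as the truncation level grows.
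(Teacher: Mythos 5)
The paper does not actually prove Theorem \ref{lo}; it imports it from \cite{R_T}, \cite{OS} (for the $M$-invariant version) together with Winter's extension of BMS mixing to $\G\ba G$ in \cite{Wi}, so there is no in-paper proof to compare against. Your overall strategy---BMS mixing plus the local product structure of $m^{\BMS}$, $m^{\BR}$, $m^{\BR}_*$ and Haar, plus cusp truncation---is precisely the route taken in those references, and your explanation of the prefactor $e^{(D-\delta)t}$ as the mismatch between the Haar Jacobian $e^{Dt}$ and the Patterson--Sullivan scaling $e^{\delta t}$ on $N^-$ is correct and is indeed the heart of the matter.

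That said, the crucial step is asserted but not derived. The claimed pointwise-in-$t$ identity
$$\int_{\G\ba G}\phi_1^\e(ga_t)\phi_2^\e(g)\,dm^{\BMS}(g) \;=\; e^{(D-\delta)t}\int_{\G\ba G}\Psi_1(ga_t)\Psi_2(g)\,dg + O(\e)$$
is the entire content of the reduction, and I do not believe it holds as written: the two sides are different functions of $t$ that share a limit, and a genuine change of variables would have to be exhibited to make them agree for each large $t$. In the standard argument one does not symmetrically thicken both test functions and then invoke mixing; rather one writes the Haar integral in $N^+AMN^-$-coordinates (with $dg=e^{Ds}\,dn^+\,ds\,dm\,dn^-$), exploits the contraction of $N^-$ under $a_t$ to freeze the $N^-$-variable (this is where $e^{Dt}$ versus $e^{\delta t}$ is compared), and then applies BMS mixing to a thickened function for each value of the transverse parameter, integrating afterwards; the error must then be shown to be uniform in $t$, which uses the $e^{-t}$-scale contraction and not merely H\"older regularity of the Busemann cocycle. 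I would also double-check the weights in your thickening: to get $m^{\BMS}(\phi_1^\e)\to m^{\BR}(\Psi_1)$ you must replace PS on $N^+$ by Haar on $N^+$, so $\phi_1^\e$ should involve an inverse PS-density weight in the $N^+$-direction (and similarly $\phi_2^\e$ in $N^-$); with the wrong sign of this weight the limits come out off by a Jacobian factor. Until the reduction identity is established, or replaced by the horospherical-equidistribution route of \cite{OS}, the argument has a gap at its center.
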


The quotient by $\Gamma$ of the convex hull of $\Lambda(\G)$  is called the convex core  of $\Gamma$.
A discrete group $\G$ is called {\it geometrically finite} if the volume of a unit neighborhood
of the convex core of $\Gamma$ is finite. Clearly lattices are geometrically finite.
 If $\Gamma$ is geometrically finite,
then $m^{\BMS}$ is known to be finite  and the critical exponent is known to be equal
to the Haudorff dimension of $\Lambda(\G)$ (\cite{Sullivan1984} and \cite{CI}).

We use the standard asymptotic "big-O" and "little-o" notations, where
for functions $f, g : \br^+ \to \br^+$, we write $f=O(g)$ if $\limsup_{T} f(T)/g(T)<\infty$
and $f=o(g)$ if $\lim_Tf(T)/g(T)=0$. We sometimes write $f=O_T(g)$ and $f=o_T(g)$
in order to clarify the parameter $T$ going to infinity. The notation $f(T)\sim g(T)$ means that
$\lim_{T\to \infty} f(T)/g(T)=1$.

%Recall the notation $D=D(\tilde X)$ the volume entropy for $\tilde X$.
%If we set $\phi_0(x)=|\nu_x|$, then $\phi_0$ is a function on $X$
%and in fact a eigenfunction for the negative of Lapace-Beltrami operator $\Delta$:
%$$(-\Delta) (\phi_0)= \delta (D-\delta)\phi_0 $$
%for the bottom spectrum $\lambda_0(X):=\delta(D-\delta)$.

\begin{thm}\cite{MO}  \label{hem}  Suppose that $\G$ is a
geometrically finite subgroup of $\SO(n,1)^\circ$ with $n\ge 2$. Suppose that
$\delta>(n-1)/2$ if $n=2,3$ and that $\delta>n-2$ if $n\ge 4$.
Then there exists $\e_0>0$ such that
for any functions $\Psi_1,\Psi_2\in C^\infty_c(\G\ba G)$, as $t\to +\infty$,
$$
e^{(n-1 -\delta)t} \int_{\G\ba G}
 \Psi_1(ga_t) \Psi_2(g) dg  = \frac{m^{\BR}(\Psi_1)\cdot m^{\BR}_*(\Psi_2)}{|m^{\BMS}|} +O(e^{-\e_0 t})$$
where the implied constant depends only on the Sobolev norms of $\Psi_1$ and $\Psi_2$.
\end{thm}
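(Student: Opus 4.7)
The plan is to reduce the matrix coefficient to an effective spectral statement on the Laplacian and its analogs on associated vector bundles over $\G\ba \Hn$, using the hypothesis on $\delta$ to extract a spectral gap. The main term $m^{\BR}(\Psi_1)\cdot m^{\BR}_*(\Psi_2)/|m^{\BMS}|$ is precisely what is produced by the base eigenfunction coming from the Patterson--Sullivan density, so the core of the proof is to quantify the gap above it.

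First, I would decompose $\Psi_1$ and $\Psi_2$ into their $K$-isotypic components under the right regular representation of $K$. Since $A$ commutes with $M$ (and $M$ is the centralizer of $A$ in $K$), matrix coefficients split according to $M$-types, and each piece may be viewed as a pairing of sections of a vector bundle over $\G\ba G/M$. The trivial $M$-type corresponds to right $K$-invariant functions, i.e.\ functions on $\G\ba \Hn$, and the matrix coefficient in that case has asymptotic behavior in $t$ controlled by the spectrum of the Laplacian on $L^2(\G\ba \Hn)$. Under $\delta>(n-1)/2$ and geometric finiteness, Lax--Phillips theory together with Patterson--Sullivan gives that the $L^2$-spectrum of the Laplacian below $((n-1)/2)^2$ is a finite set of point eigenvalues, with base eigenvalue $\delta(n-1-\delta)$ (whose eigenfunction is a constant multiple of the total Patterson--Sullivan mass function) and a spectral gap above it; this yields the main term plus an error of size $e^{-\e_0 t}$.

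Second, for each nontrivial $M$-type $\sigma$, one needs an analogous spectral gap for the shifted Casimir operator acting on $L^2$-sections of the associated bundle. This is where the stronger assumption $\delta>n-2$ for $n\ge 4$ enters: it ensures that the Patterson--Sullivan eigenvalue $\delta(n-1-\delta)$ still lies strictly below the bottom of the continuous spectrum on every such bundle, so the same main-term-plus-error expansion is available. For $n=2,3$ the weaker condition $\delta>(n-1)/2$ already suffices because the complementary-series representations of $G$ that can appear in $L^2(\G\ba G)$ are controlled directly by the scalar Laplacian estimate.

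Finally, one sums over $K$-types. The main term receives contributions only from the trivial $M$-type, while all other types contribute only to the error. Using Sobolev norms on $\Psi_1,\Psi_2$, one absorbs the $K$-type expansion: applying sufficient powers of the Casimir of $K$ forces the $\sigma$-components to decay polynomially in the highest weight of $\sigma$, so the sum of errors converges and is bounded by a constant multiple of $e^{-\e_0 t}$ times a Sobolev norm. The main obstacle is establishing the uniform spectral gap across all nontrivial $M$-types; this is the technical heart of the argument and is exactly what forces the critical-exponent threshold to depend on the dimension $n$.
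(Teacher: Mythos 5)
This is Theorem \ref{hem}, which the paper attributes to \cite{MO} (Mohammadi--Oh) and does not prove; there is no in-paper argument to compare your attempt against, only the citation. What can be evaluated is whether your sketch matches the strategy of \cite{MO} and whether it is complete as a proof.

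Your outline is faithful to the broad strategy of \cite{MO}: one works with the unitary representation of $G$ on $L^2(\G\ba G)$, uses Lax--Phillips theory (valid precisely when $\delta>(n-1)/2$) to show the discrete spectrum of the Laplacian on $L^2(\G\ba\mathbb H^n)$ below $(n-1)^2/4$ is finite with base eigenvalue $\delta(n-1-\delta)$ and eigenfunction the Patterson--Sullivan density, and then one must control the non-spherical part of $L^2(\G\ba G)$, where the complementary series attached to non-trivial $M$-types live. The threshold $\delta>n-2$ for $n\ge 4$ is indeed there to guarantee that the representation $\pi_\delta$ attached to the trivial $M$-type remains isolated once non-spherical complementary series are admitted, and for $n=2,3$ the point is that such series do not occur for non-trivial $M$-types. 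So the route you propose is essentially the same as the cited one.

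As a proof, however, the sketch has two genuine gaps that are not merely matters of detail. First, you assert that the main term $m^{\BR}(\Psi_1)\cdot m^{\BR}_*(\Psi_2)/|m^{\BMS}|$ ``is precisely what is produced by the base eigenfunction coming from the Patterson--Sullivan density,'' but this identification is a real theorem, not an observation: the leading asymptotic of the matrix coefficient inside $\pi_\delta$ must be computed and shown to reproduce the Burger--Roblin pairings, and the normalization by $|m^{\BMS}|$ has to come out correctly. This is a non-trivial computation involving the asymptotics of spherical-type functions in the complementary series and the relation between the base eigenfunction, the BR measures, and the BMS mass; without it, one has ``main term plus exponentially small error'' for an undetermined main term. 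Second, you flag ``establishing the uniform spectral gap across all nontrivial $M$-types'' as the technical heart, which is correct, but you do not supply the classification of the non-spherical complementary series of $\SO(n,1)^\circ$ and the precise parameter ranges that make $\delta>n-2$ sufficient; without citing or proving that classification, the threshold is unexplained. Both points are exactly what \cite{MO} supplies; as written, your proposal is a correct road map but not a self-contained proof.
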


%Under these conditions, $\nu_x$ gives zero mass to any proper sub-variety of $\partial(\tilde X)$.
Let $\Omega\subset M$, $\Xi_1\subset N^+$ and $\Xi_2\subset N^-$ 
be bounded Borel subsets.
For $T>0$, set
\be\label{eq:ccal-T}
\cS_T(\Xi_1,\Xi_2, \Omega)=\Xi_1A_T^+\Omega \Xi_2 .
\ee 

For Borel subsets $\Xi_1\subset N^+$ and $\Xi_2\subset N^-$ and all $g_0\in G$, put
\begin{align*}
&\hat\nu_{g_0}^+(\Xi_1v_o^+)=\int_{\Xi_1}e^{\delta r(n_1)}d\nu_{g_0(o)}(g_0n_1v_o^+)\quad\text{ and }\\ 
&\hat\nu_{g_0}^-(\Xi_2v_o^-)=\int_{\Xi_2}e^{-\delta r(n_2)}d\nu_{g_0(o)}(g_0n_2v_o^-)
\end{align*}
where $n_1=k_1a_{r(n_1)}m_1n^-\in KP^-$ and $n_2=k_2a_{r(n_2)}m_2n^+\in KP^+$.

%Set $P^+=AMN^+$ and $P^-=AMN^-$. Then $P^+=\op{Stab}_G(X_0^-)$ and $P^-=\op{Stab}_G(X_0^+)$.
% Then  $gX_0^{+} \mapsto g{P^-}$ and $X_0^- g\mapsto  gP^+ $ give
 % the identifications   $\partial(\tilde X)$ with $G/P^-$ and $ G/P^+$ respectively.
By $\text{Vol}(\Omega)$, we mean the volume of $\Omega$ computed with respect to the
probability Haar measure on $M$.

\begin{thm}\label{count} 
Fix $g_0\in G$.  If $\nu_{o}(\partial (\Xi_1 v_o^+))=0= \nu_{o}(\partial(\Xi_2^{-1} v_o^-)) $
and $\op{Vol}(\partial(\Omega))=0$, then  as $T\to \infty$,
$$\# \G \cap g_0 \cS_T(\Xi_1,\Xi_2, \Omega) g_0^{-1} \sim \frac{ \hat\nu_{g_0}(\Xi_1v_o^+) \hat\nu_{g_0}(\Xi_2^{-1}v_o^-) \op{Vol}(\Omega) }
{\delta |m^{\BMS}|} e^{\delta T}.$$
\end{thm}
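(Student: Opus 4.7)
The plan is to follow the standard mixing-and-unfolding strategy of Eskin--McMullen, adapted to the infinite-volume geometrically-finite setting by Roblin and Oh--Shah, based on the mixing theorem (Theorem \ref{lo}). First, I would smooth the counting: pick a small $\e > 0$ and a nonnegative bump $\psi_\e \in C_c(G)$ supported in $G_\e(e)$ with $\int_G \psi_\e = 1$, and set $\Psi_\e^{g_0}(\G h) = \sum_\gamma \psi_\e(\gamma h g_0^{-1})$ on $\G\ba G$. Then $\#(\G \cap g_0 \cS_T g_0^{-1})$ is sandwiched, for $\e$ small, between analogous counts over inner and outer $O(\e)$-thickenings $\cS_T^\mp$ of $\cS_T$ (thickened separately in each of $\Xi_1, A_T^+, \Omega, \Xi_2$), which are in turn sandwiched between the smoothed integrals
$$I_T^{\e, \pm} := \int_{\G\ba G \times \G\ba G} \Psi_\e^{g_0}(x) \Psi_\e^{g_0}(y) \sum_{\gamma \in \G} \mathbf{1}_{\cS_T^\pm}(x^{-1} \gamma y) \, dx\, dy.$$

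Next, I would unfold one $\G$-sum by $\G$-invariance into a single $G$-integral, and rewrite in Bruhat coordinates $u = n^+ a_t m \bar n \in N^+ A M N^-$. The Haar Jacobian (a standard rank-one computation, easily verified in $\SL_2(\R)$) is $du = e^{Dt}\, dn^+\, dt\, dm\, d\bar n$ with $D$ the volume entropy of $\tilde X$. Factorizing the indicator of $\cS_T = \Xi_1 A_T^+ \Omega \Xi_2$ and interchanging integrations yields
$$I_T^{\e, \pm} = \int_{\Xi_1^\pm \times \Omega^\pm \times \Xi_2^\pm} \int_0^T \mathcal{I}_t(n^+, m, \bar n) \, e^{Dt}\, dt\, dn^+\, dm\, d\bar n,$$
where $\mathcal{I}_t(n^+, m, \bar n) = \int_{\G\ba G} \Psi^{(1)}(y a_t) \Psi^{(2)}(y)\, dy$ is a matrix coefficient with $\Psi^{(1)}, \Psi^{(2)}$ right translates (by elements built continuously from $g_0, n^+, m, \bar n$) of the single bump $\Psi_\e^{g_0}$. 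Theorem \ref{lo} then gives pointwise in $(n^+, m, \bar n)$
$$\mathcal{I}_t \sim e^{(\delta - D) t} \cdot \frac{m^{\BR}(\Psi^{(1)}) \, m^{\BR}_*(\Psi^{(2)})}{|m^{\BMS}|}.$$
The Jacobian $e^{Dt}$ and the mixing weight $e^{(\delta - D)t}$ combine to $e^{\delta t}$, which integrates over $(0,T)$ to $\tfrac{1}{\delta} e^{\delta T}(1 + o(1))$.

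Finally, I would identify the remaining $(n^+, m, \bar n)$-integral with the geometric factor in the theorem. Using $d \tilde m^{\BR}(kma_s n^+) = e^{-\delta s} dn^+\, ds\, d\nu_o(k v_o^-)\, dm$ in $KMAN^+$-coordinates and its $\tilde m^{\BR}_*$-dual, together with the conformal covariance of $\{\nu_x\}$ and the approximate-delta property $\Psi_\e^{g_0} \to \delta_{\G g_0}$ as $\e \to 0$, the $n^+$-integration produces $\nu_{g_0 o}(g_0 \Xi_1 v_o^+)$, the $\bar n$-integration produces $\nu_{g_0 o}(g_0 \Xi_2^{-1} v_o^-)$ (the inverse reflects that the repelling fixed point of $g_0 n^+ a_t m \bar n g_0^{-1}$ is asymptotic to $g_0 \bar n^{-1} v_o^-$), and the $m$-integration produces $\op{Vol}(\Omega)$.

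The main obstacles are twofold. (i) Uniformity in $(n^+, m, \bar n)$ in applying Theorem \ref{lo}: since $(\Psi^{(1)}, \Psi^{(2)})$ form a compactly supported continuous family of right translates of a single $\Psi_\e^{g_0}$, dominated convergence together with the pointwise asymptotic suffices to interchange the $t$-limit with the outer integration. (ii) Matching the upper and lower $\e$-thickenings in the $\e \to 0$ limit: this is where the hypotheses $\nu_o(\partial(\Xi_1 v_o^+)) = 0 = \nu_o(\partial(\Xi_2^{-1} v_o^-))$ and $\op{Vol}(\partial \Omega) = 0$ enter, via a Portmanteau-type argument forcing the asymptotic main terms for $I_T^{\e, +}$ and $I_T^{\e, -}$ to agree as $\e \to 0$.
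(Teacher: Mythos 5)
Your proposal is correct and follows essentially the same route as the paper: smooth the count with a bump function, unfold the $\G$-sum, pass to $N^+AMN^-$ coordinates with Jacobian $e^{Dt}$, apply the mixing theorem \ref{lo} so that $e^{Dt}\cdot e^{(\delta-D)t}=e^{\delta t}$ integrates to $e^{\delta T}/\delta$, identify the remaining integrals against $\tilde m^{\BR}$ and $\tilde m^{\BR}_*$ with the Patterson--Sullivan measures of $\Xi_1 v_o^+$ and $\Xi_2^{-1}v_o^-$, sandwich between $\e$-thickenings, and let $\e\to 0$ using the boundary-null hypotheses. The one technical point you gloss over is the passage between $G$-ball thickenings of $\cS_T$ (which are what the bump-function sandwich naturally produces) and factor-wise thickenings of $\Xi_1,A_T^+,\Omega,\Xi_2$ (which are what the coordinate computation needs); the paper handles this by invoking the strong wave front lemma of Gorodnik--Oh--Shah for the relevant decompositions, which you would need to cite or reprove at that step.
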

%In particular this theorem implies that
%the set of all $M$-components of elements of  $\Gamma\cap N^+AMN^-$ is dense in $M$.

Under the assumption of Theorem \ref{hem}, we will prove an effective version of Theorem \ref{count}.
As usual, in order to state a result which is effective, we need to assume certain regularity condition on the boundaries
of the sets $\Xi_1,\Xi_2, \Omega$ involved.

\begin{dfn}\rm
A Borel subset $\Theta \subset \partial(\tilde X)$ is called {\it admissible} with respect to $\nu_o$ if there exists $r>0$ such that
for all small $\rho>0$,
 $$\nu_o\{\xi\in \partial(\tilde X) : d_o(\xi, \partial(\Theta))\le \rho \} \ll \rho^r$$
\end{dfn}

\begin{rmk}\label{rem1}\rm In the group $G=\SO(2,1)^\circ$, the boundary $\partial(\tilde X)$ is a circle, and
any interval of  $\partial(\tilde X)$ is admissible.
For $G=\SO(n,1)^\circ$ with $n\ge 3$,
if $\delta> \max\{n-2, (n-2+\kappa)/2\} $ where $\kappa$
is the maximum rank of parabolic fixed points of
$\G$, then any Borel subset $\omega$ of $\partial(\tilde X)$ such that
$\nu_o(\omega)>0$ and $\partial(\omega)$ is a finite union of smooth sub manifolds is admissible; this is proved
in \cite{MO}, using Sullivan's shadow lemma.\end{rmk}

%Suppose that there exists $p>0$ such that for all small $\rho>0$,
% $$\nu_o(\Xi_1 N_\rho^+ v_o^+ -\Xi_1 \cap_{n\in N_\rho^+}v_o^+)\ll \rho^p\text{ and }
 %\nu_o(\Xi_2^{-1} N_\rho^- v_o^- -\Xi_2^{-1} \cap_{n\in N_\rho^-}v_o^-)\ll \rho^p.$$

\begin{thm} \label{ecount} Let $G$ and $\G$ be as in Theorem \ref{hem}.
Suppose that $\Xi_1 v_o^+$ and $\Xi_2^{-1} v_o^{-}$ are admissible, and
that $\partial(\Omega)$ is a finite union of smooth submanifolds. 
Then for any $g_0\in G$, there exists $\e_0>0$ such that as $T\to \infty$,
\[\# \G \cap g_0 \cS_T(\Xi_1,\Xi_2, \Omega) g_0^{-1}=
 \frac{ \hat\nu_{g_0}(\Xi_1v_o^+) \hat\nu_{g_0}(\Xi_2^{-1}v_o^-) \op{Vol}(\Omega) }{\delta |m^{\BMS}|}  e^{\delta T} 
 +O(e^{(\delta -\e_0)T}).
 \]
\end{thm}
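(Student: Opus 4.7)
The plan is to effectivize the proof of Theorem~\ref{count}, in which the bisector count is related to an integrated matrix coefficient on $\Gamma\backslash G$ and evaluated via mixing of the $A$-action. Here the effective mixing of Theorem~\ref{hem} will replace plain mixing, and the effective closing Lemma~\ref{eecl} will serve as the geometric bridge from the analytic matrix coefficient back to the counting function. A final optimization of a smoothing scale $\epsilon\to 0$ balances the errors and produces the polynomial savings.

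\textbf{Smooth test functions.} Fix a small $\epsilon>0$, to be chosen later as a power of $e^{-T}$. Using the admissibility of $\Xi_1 v_o^+$ and $\Xi_2^{-1}v_o^-$ together with the regularity of $\partial\Omega$, I would construct smooth inner/outer $\epsilon$-approximations $\chi^{\pm}_i,\chi^{\pm}_\Omega$ of the corresponding indicators, with $C^k$-norms $O(\epsilon^{-k})$ and $L^1$-boundary error $O(\epsilon^r)$ (along the lines of \cite{MO}). Convolving with a unit-mass smooth bump on the flow box $\fG(e,\epsilon)$ and $\Gamma$-periodizing produces test functions $\Psi^{\pm}_1,\Psi^{\pm}_2\in C_c^\infty(\Gamma\backslash G)$ supported in small tubes around the ``plates'' $g_0\Xi^{\pm}_1\cdot N^-_\epsilon A_\epsilon M_\epsilon$ and $g_0(\Xi^{\pm}_2)^{-1}\cdot N^+_\epsilon A_\epsilon M_\epsilon$ respectively.

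\textbf{Matching and mixing.} I would sandwich the count between two integrated matrix coefficients $I_T^{\pm}:=\int_0^T\!\int_\Omega\langle a_tm\cdot\Psi^{\pm}_1,\Psi^{\pm}_2\rangle\,dm\,dt$. After unfolding the $\Gamma$-sum, each $\gamma\in\Gamma$ contributes a local overlap integral that is non-zero only when certain translates of the plates intersect; by the effective closing Lemma~\ref{eecl} this forces $g_0^{-1}\gamma g_0$ into an $O(\epsilon e^{-T})$-thickening of $\mathcal{S}_T$, with associated translation length and holonomy within $O(\epsilon)$ of the measured ones. Each such $\gamma$ then contributes the same local Haar-volume constant $c_\epsilon$ coming from the $\epsilon$-thickenings, so $I_T^{\pm}=c_\epsilon\cdot\#(\Gamma\cap g_0\mathcal{S}_T^{\pm} g_0^{-1})+O(\epsilon^r e^{\delta T})$ from boundary contributions. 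On the other hand, applying Theorem~\ref{hem} pointwise in $t$ and using the right $M$-invariance of $m^{\BR},m^{\BR}_*$ to eliminate the $m$-dependence of the main term, one obtains
\[
I_T^{\pm}=\frac{m^{\BR}(\Psi^{\pm}_1)\,m^{\BR}_*(\Psi^{\pm}_2)\,\op{Vol}(\Omega^{\pm})}{|m^{\BMS}|}\cdot\mathcal{J}(T)+O\bigl(\epsilon^{-k}\,e^{(\delta-\epsilon_0)T}\bigr),
\]
where $\mathcal{J}(T)$ combines the mixing decay $e^{(\delta-D)t}$ with the Jacobian $e^{Dt}$ from the $A$-expansion of the transverse thickening supports, producing the main $e^{\delta T}/\delta$ factor. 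A local computation in $N^+AMN^-$ coordinates identifies the BR integrals, modulo the transverse Lebesgue thickenings that will cancel against $c_\epsilon$, with the PS factors $\nu_{g_0(o)}(g_0\Xi_1 v_o^+)$ and $\nu_{g_0(o)}(g_0\Xi_2^{-1}v_o^-)$. Balancing the admissibility/closing-lemma error $O(\epsilon^r e^{\delta T})$ against the mixing error $O(\epsilon^{-k}e^{(\delta-\epsilon_0)T})$ by taking $\epsilon=e^{-\eta T}$ for suitable small $\eta>0$ then yields the claimed effective asymptotic.

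\textbf{Main obstacle.} The subtlest step is the matching: one must verify that the non-vanishing $\gamma$'s appearing in the unfolded matrix coefficient are, up to the admissibility-controlled boundary discrepancy, in one-to-one correspondence with $\Gamma\cap g_0\mathcal{S}_T g_0^{-1}$, and that each contributes the same local constant. This is where the sharpness of Lemma~\ref{eecl}---specifically, the $O(\epsilon e^{-T})$ refinement of the conjugating element, which is made possible by the precise shape $\fG(g_0,\epsilon)=g_0(N^+_\epsilon N^-\cap N^-_\epsilon N^+AM)M_\epsilon A_\epsilon$ of the flow box---is crucial. Without this $e^{-T}$ factor, near-boundary $\gamma$'s would contribute errors of the same order as the main term, and no polynomial saving would remain after optimizing $\epsilon$.
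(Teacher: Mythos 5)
Your high-level strategy — smooth inner/outer approximations controlled by admissibility, unfold the $\Gamma$-sum to an integrated matrix coefficient, apply the effective mixing of Theorem~\ref{hem}, and balance a smoothing scale $\epsilon=e^{-\eta T}$ against the mixing error — is indeed the paper's approach. But the specific mechanism you propose for relating the matrix coefficient to the count is not right, and your ``main obstacle'' paragraph actively commits to the wrong picture.

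The effective closing Lemma~\ref{eecl} plays \emph{no role whatsoever} in the proof of Theorem~\ref{ecount}. Lemma~\ref{eecl} is a statement about geodesic-flow recurrence to a flow box $\fG(g_0,\e)$: it converts a near-return $g_1\tilde a_\gamma \tilde m_\gamma=\gamma g_2$ into an exact conjugation $\gamma=g\,a_\gamma m_\gamma\,g^{-1}$, and its $O(\e e^{-T})$ refinement is what powers the Comparison Lemma~\ref{comp} in Section~5, where the authors pass from \emph{conjugacy classes} (closed geodesics) to \emph{lattice points} $\Gamma\cap \mathcal V_T(g_0,\e,\Omega)$. Theorem~\ref{ecount}, by contrast, is a pure orbit count: one counts $\gamma\in\Gamma$ in the product set $g_0\cS_T(\Xi_1,\Xi_2,\Omega)g_0^{-1}$, with no conjugacy classes involved. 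The bridge from the unfolded matrix coefficient to this count is simply the two-sided sandwich
\[
\langle F_{\cS_{T,\e}^-},\Psi^\e\otimes\Psi^\e\rangle\;\le\;\#\bigl(\Gamma\cap \cS_T\bigr)\;\le\;\langle F_{\cS_{T,\e}^+},\Psi^\e\otimes\Psi^\e\rangle,
\]
with $\cS_{T,\e}^{\pm}=\bigcap_{g_1,g_2\in G_\e(e)} g_1\cS_Tg_2$ resp.\ $\bigcup$, which is purely set-theoretic and entails no closing-lemma dynamics. The boundary error $|\cS_{T,\e}^+\setminus\cS_{T,\e}^-|$ is then controlled, not by any $e^{-T}$ factor from Lemma~\ref{eecl}, but by the admissibility assumption together with the \emph{strong wavefront property} for the $AN^\pm K$ decomposition (Gorodnik--Oh--Shah, cited in the paper), which gives the $O(\e^q)$ correction in the main term. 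Your claim that ``without the $e^{-T}$ factor, near-boundary $\gamma$'s would contribute errors of the same order as the main term'' is therefore false: the boundary contribution is $O(\e^q\,e^{\delta T})$, which after balancing $\e^q\asymp e^{-\e_0 T}$ gives the claimed polynomial saving, exactly as in the paper. You should delete the appeal to Lemma~\ref{eecl} here and replace it with the $\cS_{T,\e}^\pm$ sandwich plus the wavefront lemma; otherwise the argument conflates two logically distinct steps of the paper and assigns a role to the closing lemma that it does not, and cannot, play in this bisector-count setting.
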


The rest of this section is devoted to the proof of Theorems \ref{count} and \ref{ecount}.
In the case when $G=\SO(n,1)^\circ$, an analogous theorem for bisectors in $KA^+K$  was proved
in \cite{MO}  (see also \cite{BKS} for $n=2$, \cite{V} for $n=3$ when $\delta$ is big and \cite{GO} when $\Gamma$ is a lattice).
 In view of Theorem \ref{lo}   for a general rank one homogeneous space
admitting a finite BMS measure, the proof of Theorem \ref{count} is very similar to the one given in \cite{MO} in principle. 
%For the sake of completeness, we give a sketch of the argument.

For simplicity, we normalize $|m^{\BMS}|=1$ by replacing $\nu_o$ by a suitable scalar multiple.
For a given compact subset $\B\subset G$, consider the following function on $\G\ba G\times \G\ba G$: 
$$F_{\B}(g,h):=\sum_{\gamma\in \G} 1_{\B}(g^{-1}\gamma h) .$$

Note that for  $\Psi_1,\Psi_2 \in C_c(\G\ba G)$
\begin{align*} \la F_{\B}, \Psi_1\otimes \Psi_2\ra_{\G\ba G\times \G\ba G}
&:=\int_{\G\ba G\times \G \ba G}  F_{\B}(g_1, g_2)
\Psi_1(g_1)\Psi_2(g_2) dg_1 dg_2 .\end{align*}
By a standard folding and unfolding argument, we have
$$\la F_{\B}, \Psi_1\otimes \Psi_2\ra
 =\int_{g\in {B}} \la \Psi_1, g.\Psi_2\ra_{L^2(\G\ba G)} \;  dg .$$
Let $\psi^\e\in C^\infty(G)$ be an $\e$-approximation function of $e$, i.e.,
 $\psi^\e$ is a non-negative smooth function supported on $G_\e(e)$ and $\int \psi^\e dg=1$,
   and let $\Psi^\e\in C^\infty(\G\ba G)$ be
its $\G$-average: $\Psi^{\e}(\Gamma g)=\sum_{\gamma\in \G}\psi^{\e}(\gamma g)$.

We deduce that 
\begin{align}\label{forr}
 &\la F_{\B}, \Psi^\e\otimes \Psi^\e_{{}}  \ra_{\G\ba G\times \G\ba G}
 \notag \\ &=\int_{x\in \B}\int_{\G\ba G} \Psi^\e(g) \Psi^\e_{{}}(gx)dg dx\notag \\
&\text{ writing $x=n_1a_tmn_2\in N^+AMN^-$ and using $dx=e^{Dt} dn_1 dt dmdn_2$}\notag \\
 &=\int_{n_1 a_t m n_2\in \B}\left(\int_{\G\ba G} \Psi^\e(g) \Psi^\e_{{}}(gn_1 a_tmn_2) dg \right) e^{D t} 
 dtdn_1 dm dn_2\notag 
 \\&=\int_{n_1 a_t m n_2\in \B}\left(\int_{\G\ba G} \Psi^\e(gn_1^{-1}) \Psi^\e_{{}}(ga_tmn_2) dg \right) e^{D t} 
 dtdn_1 dm dn_2\notag \\ &\text{by applying Theorem \ref{harmixing} }\notag \\
 &= \int_{n_1 a_t m n_2\in \B} e^{\delta t} (1+o(1)) 
 m^{\BR}_{*}(n_1^{-1}\Psi^\e) m^{\BR}((mn_2) \Psi_{{}}^\e) dt dn_1dm dn_2 
\notag \\ &=\int_{n_1a_tmn_2\in \B} e^{\delta t} (1+o(1))  \tilde m^{\BR}_{*}(n_1^{-1}\psi^\e) \tilde m^{\BR}(mn_2 \psi^\e) dt 
dn_1 dm dn_2,
\end{align}
provided that $n_1$ and $n_2$ are from a fixed bounded subset of $N^+$ and $N^-$, respectively.  
%In the proof of Theorem~\ref{ecount}, we use an effective version of this compactness in the following form: $\mathcal S(g\psi)\leq \mathcal \|g\|^N\mathcal S(\psi)$ where $N$ is an absolute constant. 

If we define a function $f_{\B}$ on $N^+ \times MN^-$  by
 $$f_{\B}(n_1, m n_2)=\int_{a_t\in n_1^{-1}\B n_2^{-1}m^{-1} \cap A^+} e^{\delta t} dt ,$$
and a function on $G\times G$ by
\begin{multline*}
\left((\psi^\e\otimes \psi^\e)*f_\B \right)(g,h)\\
=\int_{n_1mn_2\in N^+MN^-} \psi^\e(gn_1^{-1}) \psi^\e(hmn_2) f_\B(n_1, mn_2) dm dn_1 dn_2\\
=\int_{n_1mn_2\in N^+MN^-} \psi^\e(gn_1) \psi^\e(hmn_2) f_\B(n_1^{-1}, mn_2) dm dn_1 dn_2,\end{multline*}
then we may write
\begin{align}\label{when}
&\la F_{B}, \Psi^\e\otimes \Psi^\e_{{}}  \ra_{\G\ba G\times \G\ba G}
\\
&=  ( \tilde m_*^{\BR}\otimes \tilde m^{\BR}) ((\psi^\e\otimes \psi^\e)*f_{B})
+ o(\max_{n_1a_t mn_2\in B} e^{\delta t})  \notag.
 \end{align}

Observe that
\begin{multline}\label{mbrf} ( \tilde m_*^{\BR}\otimes \tilde m^{\BR}) ((\psi^\e\otimes \psi^\e)*f_{\B})=\\
\int_{N^+MN^-} f_{\B}(n_1^{-1}, mn_2) \left(\int_{G\times G}\psi^\e (g_1n_1) \psi^\e(h_1mn_2)  d\tilde m_*^{\BR}(g_1)d\tilde m^{\BR}(h_1)\right)  
 dn_1 dm dn_2 .\end{multline}

By \eqref{brs} and \eqref{brf}, we have
 \be\label{inser} d\tilde m_*^{\BR}(g_1)d\tilde m^{\BR}(h_1)=
e^{\delta (r-r_0)} dn dr dm_1 d\nu_o(kv_o^+) dn_0 dr_0 dm_0 d\nu_o(k_0 v_o^-).\ee
for $g_1= km_1a_rn\in (K/M) MAN^- $ and $h_1= k_0m_0a_{r_0}n_0\in (K/M) MAN^+$.

For $x\in G$, let $\mathfrak n_1(x)$ be the $N^+$ component of $x$ in $MAN^-N^+$ decomposition and
$\tilde{\mathfrak n_2}(x)$ be the $MN^-$ component of $x$ in $AN^+(MN^-)$ decomposition.
The $A$-components of $x$
in $MAN^-N^+$ and $AN^+ MN^-$ decompositions are respectively denoted by $I_1(x)$ and $I_2(x)$.

Continuing \eqref{mbrf}, first change the inner integral using \eqref{inser} and then
perform the change of variables by putting
 $g=m_1a_r n n_1\in MAN^-N^+$ and $h= a_{r_0} n_0 m n_2\in AN^+MN^-$.
 Since
$dg=dm_1 dr dndn_1$ and $dh=dr_0 dn_0dm dn_2$, we obtain

\begin{align}\label{fbbb}
&( \tilde m_*^{\BR}\otimes \tilde m^{\BR}) ((\psi^\e\otimes \psi^\e)*f_{\B})=
\\&
\int_{k\in K/M, k_0\in K/M, m_0\in M} \int_{G\times G}\psi^\e (kg) \psi^\e(k_0m_0h)f_{\B}(\mathfrak n_1(g)^{-1}, \tilde{\mathfrak n_2}(h)) e^{\delta (I_1(g)-I_2(h))}
\notag\\&  dgdh  d\nu_o(kv_o^+) d\nu_o(k_0v_o^-) dm_0
\notag\\
 &=
\int_{k\in K/M, k_0\in  K} \int_{G\times G}\psi^\e (g) \psi^\e(h)f_{\B}(\mathfrak n_1 (k^{-1}g)^{-1},\tilde{\mathfrak n}_2(k_0^{-1} h))\notag \\&  e^{\delta (I_1(k^{-1}g)-I_2(k_0^{-1}h))}
 dgdh  d\nu_o(kv_o^+) d\nu_o(k_0)  . \notag
\end{align} where  $d\nu_o(k_0):=d\nu_o(k_0' v_o^-)dm$ for $k_0=k_0'\times m \in K/M\times M$.

\medskip
In order to prove Theorem \ref{count},
we now put $$\cS_T:=\cS_T(\Xi_1, \Xi_2, \Omega)\text{ and } F_T:=F_{\cS_T} .$$
Observe that $$F_T(e,{e} )=\# (\G {}\cap \cS_T(\Xi_1, \Xi_2, \Omega)) .$$
Let
$$\cS_{T,\e}^+=\cup_{g_1, g_2\in G_\e(e)} g_1 {\cS_T} g_2\;\text{ and }\;\cS_{T,\e}^-=\cap_{g_1, g_2\in G_\e(e)} g_1{\cS_T} g_2.$$
 We then have
\be\label{compa} 
\la F_{\cS_{T,\e}^-}, \Psi^\e\otimes \Psi^\e_{{}}  \ra \le
F_T(e, {e})\le  \la F_{\cS_{T,\e}^+}, \Psi^\e\otimes \Psi^\e_{{}}  \ra .
\ee

Using our assumptions $\nu_{o}(\partial (\Xi_1 v_o^+))=\nu_{o}(\partial(\Xi_2^{-1} v_o^-))=\op{Vol}(\partial(\Omega))=0$, 
and the strong wave front property for the $AN^{\pm}K$ decompositions \cite{GOS},  
\eqref{fbbb} applied with $B=\cS_{T,\e}^{\pm}$ now implies that
\begin{align}\label{ep} 
&( \tilde m_*^{\BR}\otimes \tilde m^{\BR}) ((\psi^\e\otimes \psi^\e)*f_{\cS_{T,\e}^\pm})\\ 
\notag&=(1+O(\e')) \int_{} e^{\delta (I_1(k^{-1})-I_2(k_0^{-1}))}f_{{\cS_T}}( \mathfrak n_1 (k^{-1})^{-1},\tilde{\mathfrak n}_2(m^{-1}k_0^{-1} ) )  d\nu_o(kv_o^+)  d\nu_o(k_0)
\end{align}
where we integrate over $K/M\times K$ and $\e'>0$ goes to $0$ as $\e\to 0$.

Recall now that if we write $k^{-1}=ma_{r}n^-n_1$, then $\mathfrak n_1 (k^{-1})=n_1$ and $I_1(k^{-1})=r$.
This also implies that $n_1^{-1}=kma_rn^{-}$. Therefore, $r(n_1^{-1})=I_1(k^{-1})$, which implies that
\[
\hat\nu_o^+(\Xi_1v_o^+)=\int_{\Xi_1} e^{\delta r(n)}d\nu_o(nv_0^+)=\int_{\mathfrak n_1 (k^{-1})^{-1}\in \Xi_1} e^{\delta I_1(k^{-1})}d\nu_o(kv_0^+)
\]
Similarly, if we write $m^{-1}k_0^{-1}=a_{r'}n^+ m_2n_2$, then $\mathfrak n_2 (m^{-1}k_0^{-1})=m_2n_2$ and $I_2(k_0^{-1})=r'$.
This also implies that $n_2^{-1}=k_0ma_{r'}n^+m_2$. Therefore, $r(n_2^{-1})=I_2(k_0^{-1})$ and we have 
\[
\hat\nu_o^-(\Xi_2^{-1}v_o^-)=\int_{\Xi_2^{-1}} e^{-\delta r(n)}d\nu_o(nv_0^-)=\int_{\mathfrak n_2 (m^{-1}k_0^{-1})^{-1}\in \Xi_2^{-1}M} 
e^{-\delta I_2(k_0^{-1})}d\nu_o(k_0v_0^-)
\] 
These together with~\eqref{ep} imply that 

\[
( \tilde m_*^{\BR}\otimes \tilde m^{\BR}) ((\psi^\e\otimes \psi^\e)*f_{\cS_{T,\e}^\pm})=(1+O(\e')) \frac{ e^{\delta T}}{\delta}  
\hat\nu_o^+ (\Xi_1 v_o^+) \hat\nu_o^- (\Xi_2^{-1} v_o^-) \op{Vol}(\Omega)
\] 
where $\e'>0$ goes to $0$ as $\e\to 0$.
Hence, \eqref{compa}, \eqref{forr} and \eqref{when} yield that
\[
F_T(e,{e})= (1+O(\e')) \frac{ e^{\delta T}}{\delta} \hat\nu_o^+ (\Xi_1 v_o^+) \hat\nu_o^- (\Xi_2^{-1} v_o^-)\op{Vol}(\Omega) +o(e^{\delta T}).
\]
Since $\e>0$ is arbitrary,
we have
$$F_T(e,e)\sim  \frac{ e^{\delta T}}{\delta}  \hat\nu_o^+ (\Xi_1 v_o^+) \hat\nu_o^- (\Xi_2^{-1} v_o^-) \op{Vol}(\Omega) .$$
In order to prove Theorem \ref{ecount} for $g_0=e$, we note that $o(e^{\delta T})$
in \eqref{forr} can be upgraded into $O(e^{(\delta -\e_0) T})$ in view of Theorem \ref{hem}, 
 and that $O(\e')$  in \eqref{ep} can be taken as $O(\e^{q})$
for some fixed $q>0$ (we refer to \cite{MO} for details).

Therefore we get
$$F_T(e,{e})= (1+O(\e^q)) \frac{ e^{\delta T}}{\delta}  \hat\nu_o^+ (\Xi_1 v_o^+) \hat\nu_o^- (\Xi_2^{-1} v_o^-)  \op{Vol}(\Omega) +
O(e^{(\delta-\e_0)T}) .$$

By taking $\e$ so that $\e^q=e^{-\e_0 t}$, we then obtain 
\[
F_T(e,{e})= \frac{ e^{\delta T}}{\delta}  \hat\nu_o^+ (\Xi_1 v_o^+) \hat\nu_o^- (\Xi_2^{-1} v_o^-)  \op{Vol}(\Omega) +
O(e^{(\delta-\e_1)T}) 
\]
for some positive $\e_1>0$. This proves Theorems \ref{count} and \ref{ecount} for $g_0=e$.

For a general $g_0\in G$, we note that if we set $\G_0:=g_0^{-1}\G g_0$, then 
$$\# \G \cap g_0 \cS_T(\Xi_1,\Xi_2, \Omega) g_0^{-1}=
\# \G_0\cap  \cS_T(\Xi_1,\Xi_2, \Omega) .$$
Moreover,
if we set $\nu_{\G_0, x}:=g_0^* \nu_{ g_0(x)}$, then $\{\nu_{\G_0, x}:x\in \tilde X\}$
is a $\G_0$-invariant conformal density of dimension $\delta =\delta_{\G_0}$, and the corresponding BMS-measure  $m^{\BMS}_{\G_0}$
with respect to $\{\nu_{\G_0, x}\}$
has the same total mass as $m^{\BMS}$.
Therefore 
$$\frac{ \hat\nu_{\G_0, o}^+(\Xi_1 v_o^+) \hat\nu_{\G_0, o}^-(\Xi_2^{-1} v_o^-)  }
{\delta_{\G_0} |m^{\BMS}_{\G_0}|}=  \frac{ \hat\nu_{g_0(o)}^+(g_0\Xi_1 v_o^+) \hat\nu_{g_0(o)}^-(g_0\Xi_2^{-1} v_o^-)  }
{\delta |m^{\BMS}|}.$$

Hence the general case follows from $g_0=e$.

\subsection{On the counting for $\G\cap \fG (\e)A_T^+ \Omega \fG(\e)^{-1}$}
%Recall, for $g_0\in G$,
%\be\label{fgdd2} \mathfrak{B}(g_0, \e)=g_0 (N^+_\e N^-\cap N^-_\e N^+AM) M_\e A_\e .\ee
Recall the definition of our flow box at $g_0\in G$ with $\e>0$ smaller than the 
injectivity radius of $g_0$ in $\G\ba G$:
\be\label{fgdd} \mathfrak{B}(g_0, \e)=g_0 (N^+_\e N^-\cap N^-_\e N^+AM) M_\e A_\e .\ee

Denote $\tilde \pi: G\to \G\ba G/M$ the canonical
projection map. For simplicity, we set 
\be\label{tfb} \tilde \fG(g_0, \e)=\tilde \pi (\fG(g_0, \e)).\ee

For a Borel function $f$ on $\G\ba G/M$ and a Borel function $\xi$ on $M$,
we set $$m^{\BMS}(f\otimes \xi): =\int_{\T^1(X)} f dm^{\BMS}\cdot  \int_M \xi dm; $$
For Borel subsets $B\subset \G\ba G/M$ and $\Omega\subset M$,
we set $m^{\BMS}
( B\otimes \Omega)=m^{\BMS}(1_B\otimes 1_\Omega)$.
We observe that:
\begin{lem}\label{bms1} For all small $\e>0$,
$$ m^{\BMS}
( \tilde \fG(g_0, \e)\otimes \Omega)=(1+O(\e)) 2\e \cdot \hat\nu_{g_0(o)}^+(g_0 N^+_\e v_o^+) 
\hat\nu_{g_0(o)}^-(g_0 N^-_\e v_o^-)\op{Vol}(\Omega)$$
where the implied constant is independent of $\e>0$. \end{lem}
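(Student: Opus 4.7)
The plan is to exploit the product structure of the flow box in the Hopf parametrization. Since $m^{\BMS}(f\otimes\xi)=m^{\BMS}(f)\cdot\int_M\xi\,dm$ by definition, the claim reduces to proving
\begin{equation*}
m^{\BMS}\bigl(\tfG(g_0,\e)\bigr)=(1+O(\e))\cdot 2\e\cdot\nu_{g_0(o)}(g_0 N^+_\e v_o^+)\cdot\nu_{g_0(o)}(g_0 N^-_\e v_o^-).
\end{equation*}
Because $\e$ is smaller than the injectivity radius of $g_0$, the projection $\tilde\pi$ is injective on $\fG(g_0,\e)/M$, so the left-hand side equals $\tilde m^{\BMS}(\fG(g_0,\e)/M)$ computed in $G/M$.

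Next I would identify the image of $\fG(g_0,\e)/M$ in the Hopf coordinates $(u^+,u^-,s)$ based at $x=g_0(o)$. From $\fG(g_0,\e)\subseteq g_0 N^+_\e N^- AM$ together with the fact that $P^-\supseteq N^- AM$ fixes $v_o^+$, every $g$ in the flow box satisfies $u^+=gv_o^+\in g_0 N^+_\e v_o^+$. Symmetrically, from $\fG(g_0,\e)\subseteq g_0 N^-_\e N^+ AM$ and that $P^+\supseteq N^+ AM$ fixes $v_o^-$, one gets $u^-=gv_o^-\in g_0 N^-_\e v_o^-$. Conversely, for every pair $(u^+,u^-)$ in these ranges the geodesic joining them meets the flow box, and Lemma \ref{box}(1) says the set of $s$-values for which the corresponding unit vector lies in $\fG(g_0,\e)/M$ is an interval of Lebesgue length exactly $2\e$. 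Therefore the image in Hopf coordinates is a product-type slab with the asserted ranges.

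To conclude, I would invoke the definition
\begin{equation*}
d\tilde m^{\BMS}(u)=\frac{d\nu_{g_0(o)}(u^+)\,d\nu_{g_0(o)}(u^-)\,ds}{d_{g_0(o)}(u^+,u^-)^{2\delta}},
\end{equation*}
valid with the base point $g_0(o)$. Since $g_0(o)$ lies on the geodesic joining $g_0v_o^+$ to $g_0v_o^-$, the Gromov product $\langle g_0v_o^+,g_0v_o^-\rangle_{g_0(o)}$ vanishes, so $d_{g_0(o)}(g_0v_o^+,g_0v_o^-)=1$. Combined with the continuity of the visual metric and the fact that the endpoints vary by $O(\e)$ on $\partial_\infty(\tilde X)$, the denominator is $1+O(\e)$ uniformly on the flow box. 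Fubini then yields the desired asymptotic.

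The main technical point is verifying the product structure of $\fG(g_0,\e)/M$ in Hopf coordinates, namely the surjectivity of the endpoint map onto $g_0 N^+_\e v_o^+\times g_0 N^-_\e v_o^-$ and the exact length $2\e$ of every $s$-fiber. This is precisely what the definition $\fG(g_0,\e)=g_0(N^+_\e N^-\cap N^-_\e N^+AM)M_\e A_\e$ is engineered to provide, so once it is recognized the remainder of the proof is a direct application of Fubini.
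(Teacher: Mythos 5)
Your proof is correct and follows essentially the same route as the paper: reduce to the $\T^1(\tilde X)$ part by factoring out $M$, write $\tilde m^{\BMS}$ in Hopf coordinates based at $g_0(o)$, identify the forward/backward endpoint ranges of the flow box with $g_0N_\e^+v_o^+$ and $g_0N_\e^-v_o^-$, use Lemma \ref{box}(1) for the exact $2\e$ length of each $s$-fiber, and observe that $d_{g_0(o)}(u^+,u^-)=1+O(\e)$ since it equals $1$ at the center. (Note that your endpoint identification $\fG(g_0,\e)v_o^\pm=g_0N_\e^\pm v_o^\pm$ agrees with the proof of Lemma \ref{bms1} in the paper; the $+/-$ signs in Lemma \ref{box}(2) as printed appear to be a typographical slip.)
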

\begin{proof} 
Clearly we have $m^{\BMS}( \tilde \fG(g_0, \e)\otimes \Omega) =\tilde m^{\BMS}( \fG(g_0, \e)\otimes \Omega)$.
Recall that the BMS measure on $\T^1(\tilde X)$ is given as
$$d\tilde m^{\BMS}(u)=\frac{d\nu_{g_0(o)} (u^+) d\nu_{g_0(o)} (u^-) ds}{d_{g_0(o)} (u^+, u^-)^{2\delta}}.$$
Note that $$\fG(g_0, \e)v_o^+ = g_0N_\e^+v_o^+$$ (which is equal to the image of $\fG(g_0, \e)$ in $G/(MAN^-)$) and
$$\fG(g_0, \e)v_o^- = g_0N_\e^-v_o^-$$ (which is equal to the image of $\fG(g_0, \e)$ in $G/(MAN^+)$) .
Hence for  all $g\in \fG(g_0, \e)$, we have
$d_{g_0(o)} (g^+, g^-)=(1+O(\e))$
where the implied constant is independent of $g_0\in G$ and $\e>0$.
Moreover, for all $g\in \fG(g_0, \e)$, $\{t\in \br: ga_t\in  \fG(g_0, \e)\}$ has
length precisely $2\e$ (see Lemma \ref{box}).
Therefore the claim follows, since the BMS measure on $G$ is the $M$-invariant extension of the BMS measure of
$G/M$.
\end{proof}

For $T>1$ and $g_0\in G$, we define 
\be \label{vgd} \mathcal V_T(g_0,\e, \Omega):=
\fG(g_0, \e  ) A_T^+ \Omega  \fG(g_0, \e )^{-1}.\ee
We set
$$\mathcal V_T( \e, \Omega):=\V_T(e,\e, \Omega)$$
and note that $$ \mathcal V_T(g_0, \e,\Omega):=
g_0\mathcal V_T(\e,\Omega)g_0^{-1}.$$

\begin{lem}\label{vcc} For all large $T\gg 1$ and small $0<\e <1$, we have
$$\cS_T( N^+_{\e}, (N^-_{\e})^{-1}, \Omega) \subset \vcal_T (\e,\Omega)\subset \cS_{T+\e}( N^+_{\e +e^{-T}}, (N^-_{\e -e^{-T}})^{-1}, \Omega^+_\e)$$
where $\Omega_\e^+=\cup_{m_i\in M_{\e}} m_1 \Omega m_2$. \end{lem}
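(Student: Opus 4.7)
The first inclusion is essentially immediate from the flow-box definition. Any $n^+ \in N^+_\e$ lies in $N^+_\e N^-$ (trivial $N^-$-part) and in $N^-_\e N^+ AM$ (trivial $N^-, A, M$-parts with $N^+$-part equal to $n^+$), so $n^+ \in \fG(e,\e)$. The symmetric argument shows $N^-_\e \subset \fG(e,\e)$, and since $(n^-)^{-1}=\exp(-x)$ has the same norm as $n^-=\exp(x)$, we get $(N^-_\e)^{-1} \subset \fG(e,\e)^{-1}$. Multiplying: $\cS_T(N^+_\e,(N^-_\e)^{-1},\Omega) \subset \fG(e,\e)\,A_T^+\,\Omega\,\fG(e,\e)^{-1} = \vcal_T(\e,\Omega)$.

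For the second inclusion, let $g = h_1 a_t m h_2^{-1} \in \vcal_T(\e,\Omega)$. The key is to use \emph{different} decompositions of $h_1$ and $h_2$: for $h_1$, use the first form $\fG(e,\e) \subset N^+_\e N^- M_\e A_\e$ to write $h_1 = n_1^+ n_1^- m_1 a_1$ with $n_1^+ \in N^+_\e$ \emph{exactly} (and $n_1^- \in N^-_{O(\e)}$, $m_1 \in M_\e$, $a_1 \in A_\e$); for $h_2$, use the second form $\fG(e,\e) \subset N^-_\e N^+ AM\, M_\e A_\e$ to write $h_2 = n_2^- n_2^+ a_2' m_2' m_2 a_2$ with $n_2^- \in N^-_\e$ \emph{exactly} (and the rest of size $O(\e)$). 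Then $h_2^{-1}$ ends on the right with $(n_2^-)^{-1} \in N^-_\e$. Substituting and using that $A$ and $M$ commute and that $M$ normalizes $N^\pm$ preserving norms, we collect the $MA$-factors in the middle into a single $a_{t'}m'$ with $t' = t + O(\e)$ and $m' \in m\cdot M_{O(\e)} \subset \Omega^+_\e$.

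The heart of the computation is tracking the residual $n_1^-$ (to the right of $n_1^+$) and $(n_2^+)^{-1}$ (to the left of $(n_2^-)^{-1}$). Conjugating them through $a_{t'}$ towards the middle contracts them: $a_{-t'} n_1^- a_{t'} \in N^-_{O(\e e^{-t})}$ and $a_{t'}(n_2^+)^{-1} a_{-t'} \in N^+_{O(\e e^{-t})}$. The resulting product $N^- \cdot N^+$ of exponentially small elements can be straightened via the Bruhat-type diffeomorphism $N^+\times A \times M \times N^- \to G$ near $e$: the identity $n^- n^+ = \tilde n^+ \tilde a \tilde m \tilde n^-$ holds with $|\tilde n^\pm - n^\pm| = O(|n^-||n^+|)$ and $|\tilde a|, |\tilde m| = O(|n^-||n^+|)$, giving corrections of order $O(\e^2 e^{-2t})$. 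These corrections are absorbed into the outer $n_1^+$ and $(n_2^-)^{-1}$, yielding $g = \hat n^+ a_{t'} m' \hat n^-$ with $\hat n^+ \in N^+_{\e + O(\e e^{-T})}$ and $(\hat n^-)^{-1} \in N^-_{\e + O(\e e^{-T})}$, together with $a_{t'} \in A^+_{T+\e}$ and $m' \in \Omega^+_\e$, placing $g$ in the claimed set.

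The main technical obstacle is the careful bookkeeping of all these small corrections, especially verifying via the implicit function theorem that the Bruhat swap of $N^- \cdot N^+$ near $e$ yields the quadratic-in-size bound on the $A$- and $M$-components (and only linear-in-the-smaller-one corrections on the $N^\pm$-components). A secondary subtlety is the boundary case when $t$ is close to $0$ so that $t' = t + O(\e)$ might not be positive; this is harmless because for such $t$, $a_t$ is close to $e$, and $g$ is close to $M$, and the inclusion into $\cS_{T+\e}$ can be checked directly (or absorbed into the convention that $A_\e = \{a_s : |s|<\e\}$, so allowing $t' \in (-\e,T+\e]$ can be replaced by $A^+_{T+\e}$ after shifting $a_1$ or $a_2$).
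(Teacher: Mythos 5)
Your approach is the same as the paper's in both directions: the first inclusion via $N^+_\e \subset \fG(e,\e)$ and $(N^-_\e)^{-1} \subset \fG(e,\e)^{-1}$, and the second via the $N^+(AM)N^-$ bookkeeping of conjugating the inner $N^\mp$-residuals through $a_t$, straightening the resulting $N^-N^+$ product by a local Bruhat swap, and absorbing the small pieces into the outer $N^\pm$ factors.

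There is, however, one place where your estimate (and, to be fair, the paper's own displayed bound) is not correct as stated. After correctly writing $a_{-t'} n_1^- a_{t'} \in N^-_{O(\e e^{-t})}$, you assert that the absorbed corrections to the outer $N^\pm$ factors are $O(\e e^{-T})$; but the correction to $n_1^+$ is essentially $a_{t'}(n_2^+)^{-1}a_{-t'}$ itself, which has size $O(\e e^{-t})$, not $O(\e e^{-T})$, and $t$ ranges over all of $(0,T]$. For bounded $t$ the contraction factor $e^{-t}$ is of order one, so an element $g_1 a_t m g_2 \in \vcal_T(\e,\Omega)$ with $t$ bounded and $(g_1)_-,(g_2)_+$ of norm comparable to $\e$ has $N^\pm$-components of norm at least $(1+c)\e$ for an absolute constant $c>0$, which does not lie in $N^\pm_{\e+e^{-T}}$ once $T$ is large. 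Your ``secondary subtlety'' paragraph only addresses the sign of $t'$, not this size issue. The way to repair the statement (and the reason the downstream counting is unaffected) is to prove the second inclusion only for $\vcal_T(\e,\Omega)\setminus\vcal_{T_0}(\e,\Omega)$ with $T_0$ chosen so that $e^{-T_0}$ is negligible --- for instance $T_0 = T/2$, which is exactly the cutoff the comparison lemma introduces later --- and then note that $\#\bigl(\G\cap\vcal_{T_0}(\e,\Omega)\bigr)$ is a lower-order term.
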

\begin{proof} Given $g\in \mathfrak B(\e)\cup \mathfrak B(\e)^{-1}$,
we decompose 
$$g=g_+g_0g_-\in N^+ (AM )N^-.$$
It easily follows from the definition of $\mathfrak B(\e)$ that
$$N_\e^+=\{g_+: g\in \mathfrak B(\e)\}
\quad \text{ and }\quad (N_\e^-)^{-1}=\{g_-: g\in \mathfrak B(\e)^{-1}\}.$$

Hence 
\be\label{cl} \cS_T( N^+_{\e}, (N^-_{\e})^{-1}, \Omega) \subset \vcal_T (\e,\Omega).\ee
On the other hand,
if $g_1\in \mathfrak B(\e)$, $g_2\in \mathfrak B(\e)^{-1}$,  $a\in A_T^+$, and $m\in M$,  then
$$g_1amg_2 \in (g_1)_+ N_{e^{-T}}^+ am A_\e M_\e (N^-_{e^{-T}})^{-1} (g_2)_-.$$
Therefore 
\be\label{cu}
 \vcal_T (\e,\Omega)
\subset \cS_{T+\e}( N^+_{\e +e^{-T}},( N^-_{\e -e^{-T}})^{-1}, \Omega^+_{\e}).\ee
This proves the claim.
\end{proof}

\begin{thm}\label{bef3} Let $\e>0$ be smaller than the injectivity radius of $g_0$.
We have 
$$\# \G \cap \mathcal V_T( g_0, \e, \Omega) 
= (1+O(\e))\frac{ e^{\delta T}}{\delta\cdot 2\e\cdot |m^{\BMS}|} \cdot (  m^{\BMS}
(  \fG(g_0,\e)\otimes \Omega) +o(1)) $$
where the implied constants are independent of $\e$. 

Moreover if $G$ and $\G$ are as in Theorem \ref{m2},
 $o(1)$ can be replaced by $O(e^{-\e_1 T})$ for some positive $\e_1>0$.
\end{thm}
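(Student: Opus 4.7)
The plan is to reduce the count over $\V_T(g_0,\e,\Omega)$ to the bisector counts of the previous subsection via the sandwich of Lemma \ref{vcc}, and then to convert the resulting $\nu_{g_0(o)}$-masses into BMS-mass of the flow box via Lemma \ref{bms1}. The scaling constants are then arranged so that both sides match the stated form.

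First I would apply Lemma \ref{vcc}, conjugated by $g_0$, to get the two-sided inclusion
\[
g_0\,\cS_T\!\bigl(N^+_{\e},(N^-_{\e})^{-1},\Omega\bigr) g_0^{-1} \;\subset\; \V_T(g_0,\e,\Omega) \;\subset\; g_0\,\cS_{T+\e}\!\bigl(N^+_{\e+e^{-T}},(N^-_{\e-e^{-T}})^{-1},\Omega_\e^{+}\bigr) g_0^{-1}.
\]
Counting $\Gamma$-points in both sides is then an immediate application of Theorem \ref{count}, which yields, for the inner set,
\[
\# \Gamma\cap g_0\,\cS_T\!\bigl(N^+_{\e},(N^-_{\e})^{-1},\Omega\bigr) g_0^{-1} \;\sim\; \frac{\nu_{g_0(o)}(g_0 N^+_{\e}v_o^+)\,\nu_{g_0(o)}(g_0 N^-_{\e}v_o^-)\,\Vol(\Omega)}{\delta\,|m^{\BMS}|}\,e^{\delta T},
\]
and the analogous expression for the outer set. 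For the outer bound, the replacement $T\mapsto T+\e$ contributes $e^{\delta\e}=1+O(\e)$; the widening of the horospherical radii from $\e$ to $\e\pm e^{-T}$ perturbs the $\nu_{g_0(o)}$-shadows by a factor $1+O(\e)$ (using regularity of $\nu_{g_0(o)}$ on $g_0 N^\pm_{\e}v_o^\pm$, which is just the statement that the horospherical shadows have $\nu$-negligible boundary); and $\Omega$ being replaced by $\Omega_\e^{+}$ only changes the volume by a factor $1+O(\e)$ because $\partial\Omega$ has zero volume.

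Second I would invoke Lemma \ref{bms1} to identify
\[
\nu_{g_0(o)}(g_0 N^+_{\e}v_o^+)\,\nu_{g_0(o)}(g_0 N^-_{\e}v_o^-)\,\Vol(\Omega) \;=\;\bigl(1+O(\e)\bigr)\,\frac{m^{\BMS}\!\bigl(\tilde{\fG}(g_0,\e)\otimes\Omega\bigr)}{2\e}.
\]
Substituting this into the two-sided bounds, the ``$\sim$'' in Theorem \ref{count} contributes an additive $o(e^{\delta T})$, which after dividing by the prefactor $e^{\delta T}/(2\e\delta|m^{\BMS}|)$ becomes $o_T(1)$ (for fixed $\e$). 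Combining upper and lower bounds yields the main-term formula
\[
\# \Gamma \cap \V_T(g_0,\e,\Omega) \;=\; (1+O(\e))\,\frac{e^{\delta T}}{\delta\cdot 2\e\cdot|m^{\BMS}|}\Bigl(m^{\BMS}\bigl(\tilde{\fG}(g_0,\e)\otimes\Omega\bigr)+o_T(1)\Bigr),
\]
as required.

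For the effective version under the hypotheses of Theorem \ref{m2}, I would replace Theorem \ref{count} by Theorem \ref{ecount}. The boundaries $\partial(N^\pm_\e v_o^\pm)$ are smooth submanifolds and $\partial\Omega$ is a finite union of smooth submanifolds, so Remark \ref{rem1} provides the admissibility hypothesis. Then the $o(e^{\delta T})$ error becomes $O(e^{(\delta-\e_0)T})$, and (for fixed $\e$) the additive error in the parenthesized factor becomes $O(e^{-\e_0 T})$; absorbing a possible additional $\e$-dependent loss gives the claimed $O(e^{-\e_1 T})$ for some $\e_1>0$. The main technical point to watch is the interplay between $\e$ and $T$ in the outer bisector, where the horospherical radius $\e+e^{-T}$ must still satisfy admissibility uniformly — this is harmless once $T$ is large compared with $\log(1/\e)$, which is implicit in the statement since $\e$ is fixed and $T\to\infty$. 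The hard part in principle is tracking that the $(1+O(\e))$ factor is genuinely multiplicative and independent of $T$, while the $o_T(1)$ inside the parenthesis is independent of $\e$; this forces one to keep the two types of approximation (geometric sandwich in $\e$, dynamical equidistribution in $T$) cleanly separated throughout.
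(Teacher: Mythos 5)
Your proposal follows essentially the same route as the paper's own proof: sandwich $\V_T(g_0,\e,\Omega)$ between two bisector sets via Lemma~\ref{vcc}, count $\Gamma$-points in each via Theorem~\ref{count}, convert the $\nu_{g_0(o)}$-products into $m^{\BMS}(\tilde\fG(g_0,\e)\otimes\Omega)$ via Lemma~\ref{bms1}, and pass to Theorem~\ref{ecount} with Remark~\ref{rem1} for the effective version. Your added commentary tracking the $e^{\delta\e}=1+O(\e)$ factor, the $T$-dependence of the thickened horospherical radii, and the independence of the $O(\e)$ and $o_T(1)$ errors is all accurate and slightly more explicit than the paper's terse statement.
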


\begin{proof} 
By  Lemma \ref{vcc}, we have
 $$g_0 \cS_T(N^+_{\e}, (N^-_{\e})^{-1}, \Omega) g_0^{-1}\subset \vcal_T (g_0,
 \e,\Omega)\subset g_0\cS_{T}( N^+_{\e +e^{-T}}, (N^-_{\e -e^{-T}})^{-1}, \Omega^+_\e)g_0^{-1}.$$
 By Theorem \ref{count} and Lemma \ref{bms1},
 we have
 \begin{multline*}
\# \G \cap \mathcal V_T( g_0, \e, \Omega) 
\\=(1+o(1)) \cdot \hat\nu_{g_0(o)}^+((g_0 N^+_\e) v_o^+) 
\hat\nu_{g_0(o)}^-((g_0 N^-_\e) v_o^-) \op{Vol}(\Omega)\delta^{-1} e^{\delta T}
\\ =(1+O(\e))(2\e)^{-1} \delta^{-1}   e^{\delta T} \cdot ( \tilde m^{\BMS}
(\fG(g_0, \e)\otimes \Omega) +o(1)),\end{multline*}
implying  the first claim. The second claim follows from Theorem
\ref{ecount}, and Remark \ref{rem1}.  

\end{proof}

\section{ Asymptotic distribution of closed geodesics with holonomies}
\label{sec:count-equi-1}
We keep the notations $G, \Gamma, X, K, o, v_o$ etc. from section \ref{sec:closing-lemma}.
In particular, $\G$ is Zariski dense and $|m^{\BMS}|<\infty$,
$X=\Gamma\ba G/K$, and $\T^1(X)=\Gamma\ba G/M$.
In this section, we will describe the distribution of all closed geodesics of
length at most $T$ coupled together with their holonomy classes, using the results proved in section \ref{sc}.
The main ingredient is the comparison lemma \ref{comp},
which we obtain using the effective closing lemma \ref{ecl}.

\medskip

%Denote $\tilde \pi: G\to \G\ba G/M$ the canonical
%projection map.
By a (primitive) closed geodesic $C$ in $\T^1(X)$, we mean a compact set of the form
 $$\G\ba \G gAM/M=\G\ba \G g A(v_o)$$
 for some $g\in G$. The length of a closed geodesic  $C=\G\ba \G gAM/M$ is same as the co-volume of 
$AM\cap g^{-1}\G g $ in $AM$. 
If we denote by $\gamma_C$ a generator of $\G\cap gAMg^{-1}$ and denote by $[\gamma_C]$ its conjugacy class
in $\G$,
then the map $$C\mapsto [\gamma_C]$$ is a bijection between 
between the set of all (primitive) closed geodesics and the set of all primitive hyperbolic conjugacy classes of $\G$.

 For each closed geodesic $C$, we denote by
  $\mathcal{L}_C$ the length measure on  $C$ and by  $h_C$
 the unique $M$-conjugacy class associated to the holonomy class of $C$.
For a primitive hyperbolic element $\gamma\in \G$, we denote by $\ell(\gamma)$
its translation length, or equivalently the length of the closed geodesic corresponding to $[\gamma]$.

\medskip

Let $M^{\textsc{c}}$ denote the space of conjugacy classes of $M$. It is known that
 $M^{\textsc{c}}$ can be identified with $\text{Lie}(S)/W$ where $S$ is a maximal torus of $M$ and
 $W$ is the Weyl group relative to $S$.
For $T>0$, define $$\mathcal G_\G(T):=\{C: C\text{ is a closed geodesic in $\T^1(X)$},\;\; \ell(C)\le T\}.$$

For each $T>0$, we define
 the measure $\mu_T$ on the product space $(\G\ba G/M) \times M^{\textsc{c}}$:
  for $f\in C(\Gamma\ba G/M)$ and any class function $\xi\in C(M)$,
$$\mu_T(f\otimes \xi)=\sum_{C\in  \mathcal G_\G(T)} \mathcal{L}_C (f) \xi(h_C).$$
%where  $C_c(\Gamma\ba G/M)$ is the space of continuous functions on $\Gamma\ba G/M$
%with compact support and ${\rm Cl}(M)$ is the space of all continuous class functions on $M$.

We also define a measure  $\eta_T$  by
$$\eta_T(f\otimes \xi)=\sum_{C\in  \mathcal G_\G(T)} \mathcal{D}_C (f) \xi(h_C),$$
 where 
$ \mathcal{D}_C (f)=\ell(C)^{-1} \mathcal{L}_C (f)$.
If $B$ is a subset of $\G\ba G/M$ and $\Omega$ is a subset of $M$,
then we put $\mu_T(B\otimes \Omega):=\mu_T(1_B\otimes 1_\Omega)$ and
$\eta_T(B\otimes \Omega):=\eta_T(1_B\otimes 1_\Omega)$.
%Finally in order to deduce the counting result from Theorem \ref{eqt}, we consider the measure
% $\eta_T$ given by;
%$$\eta_T(f\otimes \varphi)=\sum_{C\in  \mathcal G_\G(T)} \mathcal{D}_C (f) \xi(h_C),$$
% where 
%$ \mathcal{D}_C (f)=\ell(C)^{-1} \mathcal{L}_C (f)$.
 \medskip

The main goal of this section is to prove the following: 
\begin{thm}\label{eqt} Let $\G$ be geometrically finite and Zariski dense.
For any bounded $f\in C(\G\ba G/M)$ and $\xi \in\op{Cl}(M)$,
we have, as $T\to \infty$, \be\label{mue2}
\mu_{T }( f\otimes \xi)
\sim  \frac{e^{\delta T} }{\delta |m^{\BMS}|} \cdot  m^{\BMS}(f\otimes \xi) ;\ee
and \be\label{mue3}
 \eta_{T }( f\otimes \xi)
\sim \frac{ e^{\delta T}  }{\delta T \cdot |m^{\BMS}|}  \cdot  m^{\BMS}(f\otimes \xi) .\ee
Moreover if $G$ and $\G$ are as Theorem \ref{m2}, then \eqref{mue2}   holds
 with an exponential error term $O(e^{(\delta-\e_1)T})$ for
some $\e_1>0$ with the implied constants depending only on the Sobolev norms of $f$ and $\xi$, and
 for some $\e_2>0$, we have
 \be\label{mue4}
 \eta_{T }( f\otimes \xi)
=\op{li} { (e^{\delta T} ) } \frac{  m^{\BMS}(f\otimes \xi)}{ |m^{\BMS}|} +O(e^{(\delta-\e_2 )T})  .\ee

\end{thm}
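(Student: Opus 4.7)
The plan is to deduce Theorem \ref{eqt} from Theorem \ref{bef3} via a comparison lemma relating $\mu_T$ to the group counting function, and then to pass from $\mu_T$ to $\eta_T$ by Stieltjes integration. The heart of the argument is the \emph{comparison lemma}: for $g_0\in G$, small $\e>0$ less than the injectivity radius at $g_0$, and any conjugation-invariant Borel $\Omega\subset M$,
\[
\mu_T\bigl(\tfG(g_0,\e)\otimes \Omega\bigr) = 2\e\cdot \#\bigl(\G\cap \V_T(g_0,\e,\Omega)\bigr) + O(Te^{\delta T/2}).
\]
To see this, note that the left-hand side is $\sum_C \mathcal L_C(\tfG(g_0,\e))$ over primitive closed geodesics $C=\G\backslash \G g_C A v_o$ of length $\leq T$ with $h_C\subset\Omega$. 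Each visit of $C$ to the box contributes length exactly $2\e$ by Lemma \ref{box}(1), so $\mathcal L_C(\tfG(g_0,\e)) = 2\e\cdot N_C$ where $N_C$ is the number of visits per period. A visit at time $t$, i.e., $\sigma g_C a_t\in \fG(g_0,\e)M$ for some $\sigma\in\G$, yields via $\gamma_C = g_C a_{\ell(C)}m_{\gamma_C}g_C^{-1}$ a distinct element $\sigma\gamma_C\sigma^{-1}\in \G\cap \V_{\ell(C)}(g_0,\e,\Omega)$; the conjugation-invariance of $\Omega$ matches $h_C\subset\Omega$ with the $\Omega$-sector of the counting region. Conversely, the effective closing lemma \ref{eecl} shows that every $\gamma\in \G\cap \V_T(g_0,\e,\Omega)$ is hyperbolic with $[m_\gamma]$ within $O(\e)$ of a class in $\Omega$ and arises from a unique visit of its associated closed geodesic, modulo an $O(\e)$ thickening of $\Omega$. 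Non-primitive contributions $\gamma = \gamma_0^k$, $k\geq 2$, have translation length $\geq 2\ell(\gamma_0)$ and are bounded by $O(T)\#\mathcal G_\G(T/2)$.

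Combining this with Theorem \ref{bef3} and Lemma \ref{bms1} yields, for indicators of flow boxes and small invariant $\Omega$ with $\op{Vol}(\partial\Omega)=0$,
\[
\mu_T(\tfG(g_0,\e)\otimes \Omega) = \bigl(1+O(\e)\bigr)\,\frac{e^{\delta T}}{\delta|m^{\BMS}|}\,m^{\BMS}\bigl(\tfG(g_0,\e)\otimes \Omega\bigr) + o(e^{\delta T}).
\]
For general bounded $f\in C(\G\backslash G/M)$ and class function $\xi$, apply a partition of unity by flow boxes on the thick part of $\T^1(X)$ and a partition of $M^{\textsc c}$ by small conjugation-invariant subsets; sandwich $f\otimes\xi$ between upper and lower step envelopes, and let $\e\to 0$. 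Since $f$ is only bounded, the cuspidal region must be handled separately: geometric finiteness of $\G$, together with Sullivan's shadow lemma, shows that both $m^{\BMS}$ and the normalized $\mu_T$ place asymptotically negligible mass in the $\e$-thin part, giving (\ref{mue2}).

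Assertion (\ref{mue3}) follows by Stieltjes integration:
\[
\eta_T(f\otimes \xi) = \int_0^T \frac{d\mu_t(f\otimes\xi)}{t} \sim \frac{m^{\BMS}(f\otimes\xi)}{|m^{\BMS}|}\int_2^T \frac{e^{\delta t}}{t}\, dt \sim \frac{e^{\delta T}}{\delta T}\cdot\frac{m^{\BMS}(f\otimes \xi)}{|m^{\BMS}|}.
\]
For the effective statements assumed in Theorem \ref{m2}, I would replace Theorem \ref{count} by Theorem \ref{ecount} throughout, smooth the indicators of $\tfG(g_0,\e)$ and $\Omega$ by bumps with controlled Sobolev norms, and balance $\e$ as a small power of $e^{-T}$; a refined Stieltjes calculation then produces $\op{li}(e^{\delta T})$ in place of $e^{\delta T}/(\delta T)$, yielding (\ref{mue4}).

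The main obstacle is the careful bookkeeping behind the comparison lemma: ensuring the correspondence between visits of closed geodesics to the box and elements of $\G$ in the counting region is bijective modulo genuinely negligible terms, and that both the $O(\e e^{-T})$ thickening from the effective closing lemma and the $O(\e)$ ambiguity in $[m_\gamma]$ do not accumulate destructively when $\Omega$ has boundary; admissibility of $\Omega$ (Remark \ref{rem1}) is needed in the effective setting to absorb the latter. A secondary obstacle is the uniform control of $\mu_T$ in the cuspidal region when $f$ is only bounded, which relies essentially on geometric finiteness of $\G$.
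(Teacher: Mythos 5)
Your overall route is the same as the paper's: a comparison lemma matching visits of closed geodesics to the flow box with conjugates lying in $\V_T(g_0,\e,\Omega)$ (Proposition \ref{upper}, Lemma \ref{upper2}, Lemma \ref{comp}, with the effective closing lemma supplying the lower-bound inclusion and the $\G_{ph}^k$, $k\ge 2$, absorbed by $\V_{T/2}$), then Theorem \ref{bef3} and Lemma \ref{bms1} to express the count as $m^{\BMS}(\tfG\otimes\Omega)$, a partition-of-unity argument (Theorem \ref{eqthmc}) for compactly supported $f\otimes\xi$, and finally a summation/Stieltjes argument for $\eta_T$. Your use of integration by parts to get (\ref{mue3}) is fine once you insert the intermediate step $\eta_T=\mu_T/T-\mu_{t_0}/t_0+\int_{t_0}^T\mu_t/t^2\,dt$ and note that the last integral is dominated by $t$ near $T$; the paper instead uses a splitting of $\mathcal G_\G(T)$ at level $(1-\vare)T$ for the non-effective claim and reserves the Abel-summation computation for (\ref{mue4}).

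The one place where your argument does not stand on its own is the extension from compactly supported to merely bounded $f$. You attribute the needed cuspidal smallness to ``geometric finiteness together with Sullivan's shadow lemma,'' but the shadow lemma by itself does not give a bound on $e^{-\delta T}\mu_T(\cusp(r)K)$; the measure $\mu_T$ is supported on closed geodesics, and controlling its excursion into the cusp requires Roblin's estimate (Theorem \ref{cc}), which bounds $e^{-\delta T}\mu_T(\cusp(r)K)$ by the tail of the weighted parabolic Poincar\'e series $\sum_{\sigma\in\G_{p_i}}d(q_i,\sigma q_i)\,e^{-\delta d(q_i,\sigma q_i)}$, combined with the Dal'bo--Otal--Peign\'e convergence of that series (Proposition \ref{cusp}). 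The latter is a separate theorem about geometrically finite groups (equivalent to $|m^{\BMS}|<\infty$) and is not a consequence of the shadow lemma. Without these two inputs, there is no control over how much of $\mu_T$ sits deep in the thin part, and (\ref{mue2}) for bounded $f$ would be unjustified. For the effective statement one additionally needs the quantitative bound $e^{-\delta T}\mu_T(\cusp(r)K)\ll e^{(\kappa-2\delta)r}$ of (\ref{ef}), which is where the hypothesis $\delta>(n-2+\kappa)/2$ of Theorem \ref{m2} enters.
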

Theorems \ref{m1} and \ref{m2} in the introduction follow immediately from Theorem \ref{eqt},
whose proof occupies the rest of this section.

Fix a Borel subset $\Omega$ of $M^{\textsc{c}}$ and $g_0\in G$.
Recall the flow box $ \mathfrak{B}(g_0, \e)=g_0 (N^+_\e N^-\cap N^-_\e N^+AM) M_\e A_\e $ and
the notation $\tilde \fG(g_0, \e)=\tilde \pi (\fG(g_0, \e))$ from \eqref{fgdd} and \eqref{tfb}.
We will first investigate the measure $\mu_T$ restricted to the set
$\tilde \fG(g_0, \e)\otimes \Omega$.
The main idea is to relate the measure $\mu_{T}(\tilde \fG(g_0, \e )\otimes \Omega)$ with
the cardinality $\#\G\cap \V_T(g_0,\e, \Omega)$.

%For a subset $S$ in $G/M$, and a subset $B\subset G$,
%we abuse notation to denote $S\cap B\ne\emptyset$ when $S\cap BM/M\ne\emptyset$.

\medskip We fix $g_0\in \op{supp}(\tilde m^{\BMS})$ and $\e>0$ (smaller than the injectivity radius of $g_0$) from now on until Theorem \ref{bef}.
For a closed geodesic $C=\G\ba \G gA v_o \subset \G\ba G/M$,
we choose  a complete geodesic $\tilde C\subset G/M$, which is a lift of $C$.
The stabilizer $\G_{\tilde C}=\{\gamma\in \G:
\gamma(\tilde C)=\tilde C\}$ is $gAMg^{-1}\cap \G$ which is generated by a primitive hyperbolic element of $\G$, and
$C$ can be identified with $\G_{\tilde C}\ba \tilde C$.
Set \be \label{ic} I(C)=\{[\sigma ]\in \G/\G_{\tilde C} : \sigma \tilde C \cap \fG(g_0, \e) v_o\ne\emptyset\},\ee
that is, $I(C)=\{\sigma\tilde C: \sigma \tilde C \cap \fG(g_0, \e) v_o\ne\emptyset\}$.
Clearly $\#I(C)$ does not depend on the choice of $\tilde C$.
\begin{lem}\label{prelc} 
\begin{enumerate}
\item For any closed geodesic $C\subset \T^1(X)$, we have
$${\mathcal L}_C ( \tilde \fG(g_0,\e))=2\e \cdot \# I(C);$$
\item For any $T>0$,
we have \be\label{pr1} \mu_{T}(\tilde  \fG(g_0, \e)\otimes \Omega)
=2\e \cdot \sum_{ C\in \mathcal G_\G(T)}\# I(C) \cdot 1_\Omega (h_C) 
\ee  where $h_C$ is
the holonomy class about $C$.
\end{enumerate}
\end{lem}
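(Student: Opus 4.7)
Part (2) is an immediate rewriting of the definition of $\mu_T$: taking $\xi = 1_\Omega$ and $f = 1_{\tilde\fG(g_0,\e)}$ gives
\[
\mu_T(\tilde\fG(g_0,\e)\otimes\Omega) = \sum_{C\in\mathcal G_\Gamma(T)} \mathcal L_C(\tilde\fG(g_0,\e))\cdot 1_\Omega(h_C),
\]
and inserting part (1) produces the claimed formula. So the substance of the lemma lies in part (1).

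For part (1), I would first parametrize the closed geodesic. Fix a lift $\tilde C = gAv_o \subset G/M$ of $C$, and let $\gamma_C\in\G$ be the primitive hyperbolic generator of $\Gamma_{\tilde C} = gAMg^{-1}\cap\Gamma$, so that $\gamma_C g = g a_{\ell(C)} m_{\gamma_C}$ for some $m_{\gamma_C}\in M$. The map $t\mapsto \tilde\pi(g a_t)$ descends to a length-preserving identification of $\mathbb R/\ell(C)\mathbb Z$ with $C$, so with $\pi\colon G/M\to\G\ba G/M$ denoting the quotient,
\[
\mathcal L_C(\tilde\fG(g_0,\e)) = \bigl|\{t\in[0,\ell(C)) : ga_tv_o\in\pi^{-1}(\tilde\fG(g_0,\e))\}\bigr|.
\]
Since $\e$ lies below the injectivity radius at $g_0$, the preimage in $G/M$ is the disjoint union $\bigsqcup_{\sigma\in\Gamma}\sigma\cdot\fG(g_0,\e)v_o$, so this measure splits as $\sum_{\sigma\in\Gamma}|T_\sigma^C|$, where $T_\sigma^C := \{t\in[0,\ell(C)) : \sigma^{-1}ga_tv_o\in\fG(g_0,\e)v_o\}$.

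Next I would fold the sum onto $\Gamma_{\tilde C}\backslash\Gamma$. Since $m_{\gamma_C}v_o = v_o$, one checks $\gamma_C^{-k}ga_tv_o = ga_{t-k\ell(C)}v_o$, so writing $\sigma = \gamma_C^k\sigma_0$ yields $T_{\gamma_C^k\sigma_0} = T_{\sigma_0} + k\ell(C)$, where $T_{\sigma_0}$ is the corresponding unrestricted set in $\mathbb R$. Summing over $k\in\mathbb Z$ telescopes the restriction to $[0,\ell(C))$, giving
\[
\sum_{\sigma\in\Gamma}|T_\sigma^C| = \sum_{[\sigma_0]\in\Gamma_{\tilde C}\backslash\Gamma}|T_{\sigma_0}|,
\]
and the bijection $[\sigma_0]\mapsto[\tau]=[\sigma_0^{-1}]$ reindexes this as a sum over $[\tau]\in\Gamma/\Gamma_{\tilde C}$ of $\text{length}(\tau\tilde C\cap\fG(g_0,\e)v_o)$.

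Finally, Lemma \ref{box}(1), combined with the fact that $M$ commutes with $A$ and fixes $v_o$, shows that any nonempty intersection of an $A$-orbit in $G/M$ with the box $\fG(g_0,\e)v_o$ is an arc of length exactly $2\e$. Hence each nonempty $\tau\tilde C\cap\fG(g_0,\e)v_o$ contributes $2\e$ and the rest contribute $0$, giving $\mathcal L_C(\tilde\fG(g_0,\e)) = 2\e\cdot\#I(C)$. The main obstacle will be verifying that the arc-length-$2\e$ property descends cleanly from $G$ to $G/M$; this hinges on the explicit $M_\e A_\e$ factor in the definition of $\fG(g_0,\e)$, which guarantees the clean box structure survives the projection.
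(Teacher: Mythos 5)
Your proposal is correct and takes essentially the same approach as the paper: write $\mathcal L_C(\tilde\fG(g_0,\e))$ as an integral of $\sum_\sigma 1_{\fG(g_0,\e)}(\sigma g a_t v_o)$, fold/unfold onto cosets of $\G_{\tilde C}$, and invoke Lemma \ref{box}(1) to conclude that each nonempty translate contributes exactly $2\e$. The paper carries out the unfolding in a single step, indexing directly by $\G/\G_{\tilde C}$, whereas you go through $\Gamma_{\tilde C}\backslash\Gamma$ and then reindex via inversion; the extra care you take about lifting from $G/M$ to $G$ and about the descent of the $2\e$ arc-length property is a genuine (if routine) gap in the paper's terse presentation, and your observation that the $M_\e A_\e$ structure of the box makes this descent clean is exactly the right point to check.
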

\begin{proof} (2) immediately follows from (1). To see (1),
let $C=\G\ba \G gAv_o$. We may assume $\tilde C=gAv_o.$
We have 
\begin{align*}
{\mathcal L}_C ( { \tilde \fG}(g_0,\e)) &=
\int_{[ga_tv_o]\in \G_{\tilde C}\ba \tilde C } \sum_{\sigma \in \G } 1_{\fG(g_0,\e)} (\sg g a_t v_o) dt \\&=
\sum_{[\sg ] \in \G/\G_{\tilde C}} \int_{ga_tv_o\in \tilde C} 1_{\fG(g_0,\e)} (\sg g a_t v_o) dt .\end{align*}
 By Lemma \ref{box},
 we have 
	 \be \int_{\tilde C}1_{\fG(g_0,\e)} (\sg g a_t v_o) dt =  \begin{cases} & 
 2\e,\;\; \text{ if $\sg \tilde C\cap \fG(g_0,\e)v_o\ne 0$}
 \\  &0,\;\;  \text{  otherwise.}\end{cases} \ee
 Therefore the claim follows.
\end{proof}

Set $$\mathcal W(g_0, \e, \Omega):=
\{ gam g^{-1}: g\in \fG(g_0, \e), am\in A\Omega \} .$$
By definition, the set $\mathcal W(g_0, \e, \Omega)$ consists of hyperbolic elements.
For $T>1$, we set $$\mathcal W_T(g_0, \e, \Omega):=
\{ gam g^{-1}: g\in \fG(g_0, \e), am\in A_T^+\Omega \} . $$
%where $B_T(g_0):=\{ h\in G: d(h g_0(v_o), g_0(v_o))\le T\} $.

We denote by $\G_h$ the set of hyperbolic elements and
by $\G_{ph}$ the set of primitive hyperbolic elements of $\G$.

\begin{prop}\label{upper}
For 
all large $T\gg 1$, we have
$$
\mu_{T}(\tilde  \fG(g_0, \e)\otimes 1_\Omega)
 =2\e\cdot \# \G_{ph}\cap 
\mathcal W_{T}(g_0,\e, \Omega)
$$
\end{prop}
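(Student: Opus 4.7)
The plan is to reduce the proposition to a bijection between two finite sets. By Lemma \ref{prelc}(2), the left hand side equals $2\e \cdot \sum_{C\in \mathcal G_\G(T)} \#I(C)\cdot 1_\Omega(h_C)$, so it suffices to establish a bijection
\[
\Phi : \bigl\{(C, [\sigma]) : C\in \mathcal{G}_\Gamma(T),\; h_C\in \Omega,\; [\sigma]\in I(C)\bigr\} \;\longleftrightarrow\; \Gamma_{ph}\cap \mathcal{W}_T(g_0,\e,\Omega).
\]
The natural definition is $\Phi(C,[\sigma]) = \sigma\gamma_C\sigma^{-1}$, where $\gamma_C$ is the primitive hyperbolic generator of $\Gamma_{\tilde C}$ associated with the oriented lift $\tilde C = g_C Av_o$ of $C$ (chosen so that $\gamma_C = g_C a_{\gamma_C} m_{\gamma_C} g_C^{-1}$ with $a_{\gamma_C}\in A^+$). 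First I would check well-definedness: changing the representative $\sigma\gamma_C\sigma^{-1}$ is insensitive to replacing $\sigma$ by $\sigma\gamma_C^k$ since $\gamma_C$ centralizes itself, and a change of lift $\tilde C \mapsto \tau\tilde C$ replaces $\gamma_C$ by $\tau\gamma_C\tau^{-1}$ and $\sigma$ by $\sigma\tau^{-1}$, leaving the product invariant.

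Next I would verify that $\Phi$ lands in $\Gamma_{ph}\cap \mathcal{W}_T(g_0,\e,\Omega)$. The condition $[\sigma]\in I(C)$ gives $t\in \R$ and $h\in \fG(g_0,\e)$ with $\sigma g_C a_t v_o = h v_o$ in $G/M$, i.e.\ $\sigma g_C a_t = hm$ for some $m\in M$. A direct computation, using that $A$ commutes with $M$, yields
\[
\sigma\gamma_C\sigma^{-1} = h\,\bigl(a_{\gamma_C}\cdot(m m_{\gamma_C} m^{-1})\bigr)\, h^{-1}.
\]
Since $\ell(C)\le T$ translates to $a_{\gamma_C}\in A_T^+$, and since $\Omega$ is conjugation-invariant with $h_C = [m_{\gamma_C}]\in \Omega$, the element $m m_{\gamma_C} m^{-1}$ lies in $\Omega$. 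Hence $\sigma\gamma_C\sigma^{-1}\in \mathcal{W}_T(g_0,\e,\Omega)$, and it is primitive hyperbolic because $\gamma_C$ is.

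For injectivity I would argue that if $\sigma_1\gamma_{C_1}\sigma_1^{-1} = \sigma_2\gamma_{C_2}\sigma_2^{-1}$, then $\gamma_{C_1}$ and $\gamma_{C_2}$ are $\Gamma$-conjugate, forcing $C_1 = C_2$; taking a common lift, $\sigma_2^{-1}\sigma_1$ centralizes $\gamma_C$, and since $\Gamma$ is torsion-free and discrete the centralizer of a primitive hyperbolic element is $\langle\gamma_C\rangle = \Gamma_{\tilde C}$, giving $[\sigma_1] = [\sigma_2]$. For surjectivity, given $\gamma = h a m h^{-1}$ with $h\in \fG(g_0,\e)$ and $am\in A_T^+\Omega$, I set $\tilde C := hAv_o$, let $C$ be its projection (primitive because $\gamma$ is), and note that $\gamma_C = \gamma$ and $[e]\in I(C)$ since $hv_o\in \fG(g_0,\e)v_o$; then $\Phi(C,[e]) = \gamma$.

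The main subtlety, and the only place the proof has to be careful, is the $M$-ambiguity in the defining intersection condition of $I(C)$: passing from the identity $\sigma g_C a_t = hm$ in $G$ to the required conjugation identity requires the auxiliary $m\in M$ to be absorbed into the holonomy coordinate, which is exactly why the hypothesis that $\Omega$ is conjugation-invariant is indispensable. Everything else amounts to bookkeeping in the $AM$-decomposition of hyperbolic elements.
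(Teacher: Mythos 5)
Your proof is correct and takes essentially the same route as the paper: the paper splits the identity into an upper bound (the injection $[\sigma]\mapsto\sigma\gamma_C\sigma^{-1}$ from $I(C)$ into $\G_{ph}\cap\W_T(g_0,\e,\Omega)$) and a lower bound (the inverse injection $\sigma\gamma\sigma^{-1}\mapsto\sigma(\tilde C)$ built from Corollary \ref{cuq}), which together amount to exactly the bijection $\Phi$ you construct. The core computation $\sigma\gamma_C\sigma^{-1}=h\bigl(a_{\gamma_C}(mm_{\gamma_C}m^{-1})\bigr)h^{-1}$, the appeal to conjugation-invariance of $\Omega$, and the identification of the centralizer of a primitive hyperbolic element with $\G_{\tilde C}$ are all the same steps.
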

\begin{proof}
We use Lemma \ref{prelc} (2):
$$ \mu_{T}(\tilde  \fG(g_0, \e)\otimes 1_\Omega)
=2\e \cdot \sum_{ C\in \mathcal G_\G(T)}\# I(C) \cdot 1_\Omega (h_C) .$$
with $I(C)=\{\sigma(\tilde C): \sigma \tilde C \cap \fG(g_0, \e)v_o \ne\emptyset\}$.

\noindent{\bf Upper bound:} Let $C\in \mathcal G_\G(T)$ be with $I(C)$ non-empty and $h_C\in \Omega$.
Without loss of generality, we may assume $\tilde C\cap  \fG(g_0, \e) v_o\ne\emptyset$.
Choose a primitive hyperbolic element
$\gamma:=\gamma_C\in \G_{\tilde C}$. 
We claim that for any $[\sigma] \in I(C)$,
 \be\label{s1}\sigma_\gamma:=\sigma \gamma \sigma^{-1}\in \W_{T}(g_0,\e, \Omega);\ee
note that $\sigma_\gamma$ is well-defined independent of the choice of a representative $\sigma$ since
$\G_{\tilde C}$ is commutative. 
Since $\tilde C\cap  \fG(g_0, \e) v_o\ne\emptyset$,
 there exists $g_1\in \fG(g_0,\e)$ such that 
$g_1v_o\in \tilde C$, and 
$\gamma = g_1 a_\gamma m_\gamma g_1^{-1}$ where $d(a_\gamma, e)=\ell(C)\le T$ and $[m_\gamma]\in \Omega$.
If $[\sigma]\in I(C)$, 
 then there exist $g_2\in \fG(g_0,\e)$ and $a_sm \in AM$ 
such that 
$$\sigma g_1a_s m = g_2.$$
Therefore, we have \[
g_2 a_{\g} {m}^{-1} m_{\g} m =\sigma{\g}\sigma^{-1}g_2 
\]
and $$\sigma_\gamma= g_2 a_\gamma m^{-1} m_{\g}m  g_2^{-1}\in \mathcal W_T(g_0, \e, \Omega).$$
%Since $d(\sigma_\gamma  g_2(o), g_2(o)) =d(a_\gamma (o), o)=\ell(C)$,
%we have $ d(\sigma g_0(o), g_0(o)) \le \ell(C) +2c\e,$
proving \eqref{s1}. 

To see that the map $[\sigma] \mapsto \sigma_\gamma$ is injective on $I(C)$, it suffices
to recall that the centralizer of $\gamma$ in $\G$ is $\G_{\tilde C}$. Hence this proves the upper bound.
%Since distinct closed geodesics correspond to different conjugacy classes of primitive
% hyperbolic elements, this proves the upper bound.

\noindent{\bf Lower bound:} We write
$$\# \G_{ph}\cap \mathcal W_{T}(g_0,\e, \Omega)
=\sum \# [\gamma]\cap \mathcal W_{T}(g_0,\e, \Omega)$$
where the sum ranges over the conjugacy classes 
$$[\gamma]=\{\gamma_0\in \Gamma_{ph}:\gamma_0
\text{ is conjugate to $\gamma$ by an element of $\Gamma$} \}$$ of primitive hyperbolic elements of $\Gamma$.
Fix a primitive hyperbolic clement $\gamma \in \mathcal W_{T}(g_0,\e, \Omega)$.
So
 there exists $g\in \fG(g_0,\e )$ such that
$\gamma =g a_\gamma m_\gamma g^{-1}$ with $ a_\gamma\in A_T^+$ and
$[m_\gamma]\in \Omega$.
Let $C=\G\ba \G g aAv_o$ and $\tilde C=gAv_o$. Then
the length of $C$ is at most $T$. 

For each element
$\sigma':=\sigma \gamma \sigma^{-1} \in [\gamma]\cap \mathcal W_{T}(g_0,\e, \Omega)$, 
we have $\sigma \gamma \sigma^{-1}=g_2 a_\gamma m g_2^{-1}$ for some $[m]\in \Omega$ and $g_2\in \fG(g_0, \e)$.

Since  $\sigma^{-1} g_2Av_o$
is the oriented axis for $\gamma$,
$\sigma^{-1}g_2Av_o=\tilde C$ by Corollary \ref{cuq}. Therefore $g_2 v_o\in \sigma(\tilde C)\cap \fG(g_0, \e)v_o$, and hence
$\sigma(\tilde C)\in I(C)$. Since the map $\sigma' =\sigma \gamma\sigma^{-1} \mapsto \sigma (\tilde C)$
is well-defined and injective,
this proves the lower bound by \eqref{pr1}.\end{proof}
\bigskip
Indeed the proof of Proposition \ref{upper} gives that if $C$ is a closed geodesic and
$[\gamma_C]$ is the conjugacy class of primitive hyperbolic elements which corresponds to $C$, then
\be \mathcal{L}_C (\tilde \fG(g_0,\e) ) 1_\Omega(h_C)= 2\e\cdot  \# I(C) \cdot 1_\Omega(h_C)= 2\e\cdot
 \# [\gamma_C]\cap \mathcal W (g_0,\e, \Omega).\ee

Recall the notation $$\mathcal V_T(g_0,\e, \Omega):=
\fG(g_0, \e  ) A_T^+ \Omega  \fG(g_0, \e )^{-1}.$$ 
 Let $c>1$ be a fixed upper bound for all implied
constants  involved in the $O$ symbol 
in Lemma \ref{ecl} and the constant in \eqref{comp'}.

The effective closing lemma \ref{eecl} implies that for all large $T\gg T_0$,
\be \label{mewl} \mathcal V_{T}(g_0,\e(1-ce^{-T/2}), \Omega_{c\e}^-)- \mathcal V_{T_0}(g_0,\e, \Omega)
\subset \mathcal W_{T}(g_0,\e, \Omega) .\ee

%\begin{prop}\label{upper2}
%For 
%all large $T\gg 1$, we have
%\begin{multline*}
%2\e\cdot \# \G\cap\left( \mathcal W_{T}(g_0,\e, \Omega) -
%\mathcal W_{T/2}(g_0,\e, \Omega) \right)\le 
 %\\ \mu_{T}(\tilde  \fG(g_0, \e)\otimes \chi_\Omega)\leq2\e\cdot \# \G\cap 
%\mathcal W_{T}(g_0,\e, \Omega)\end{multline*}\end{prop}

\begin{lem}\label{upper2} 
For $T\gg 1$, we have
$$ \# \G\cap\left( \mathcal W_{T}(g_0,\e, \Omega) -
\mathcal W_{2T/3}(g_0,\e, \Omega) \right)\le 
  \# \G_{ph}\cap 
\mathcal W_{T}(g_0,\e, \Omega).
$$ \end{lem}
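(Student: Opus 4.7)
\medskip
\noindent\textbf{Proof plan.} My plan is to construct an injection
$\Phi\colon \Gamma\cap(\mathcal W_T-\mathcal W_{T/2})\hookrightarrow \Gamma_{ph}\cap\mathcal W_T$
by sending each hyperbolic element to its primitive root. Given $\gamma$ in the source, write $\gamma=gamg^{-1}$ with $g\in\mathfrak B(g_0,\e)$ and $am\in A_T^+\Omega$, so that $\ell(\gamma)=d(a,e)\in(T/2,T]$. Since $\gamma$ is hyperbolic, it admits a unique primitive root $\gamma_0\in\Gamma_{ph}$ with $\gamma=\gamma_0^k$ for some integer $k\ge 1$; I define $\Phi(\gamma):=\gamma_0$. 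The case $k=1$ is trivial.

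For $k\ge 2$, I need to check $\gamma_0\in\mathcal W_T$. Corollary \ref{cuq} forces $\gamma_0$ to share the oriented axis $gAv_o$ of $\gamma$, so $\gamma_0=g a_0 m_0 g^{-1}$ for some $a_0\in A^+$, $m_0\in M$. Because $A$ centralizes $M$, raising to the $k$-th power gives $a_0^k=a$ and $m_0^k=m$. Hence $\ell(\gamma_0)=\ell(\gamma)/k\le T/2$, so $a_0\in A_T^+$; the requisite $\Omega$-condition for $m_0$ descends from $m_0^k=m\in\Omega$ using that $\Omega$ is taken to be conjugation-invariant and that the relevant $m$-coordinate is only recorded up to the $M$-conjugacy class of the holonomy. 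Thus $\gamma_0\in\mathcal W_T$.

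The delicate step is injectivity. If two distinct preimages $\gamma=\gamma_0^k$ and $\gamma'=\gamma_0^{k'}$ of $\Phi$ share the primitive root $\gamma_0$, then both lengths $k\ell(\gamma_0),k'\ell(\gamma_0)$ must lie in $(T/2,T]$, confining the integers $k,k'$ to $(T/(2\ell(\gamma_0)),T/\ell(\gamma_0)]$. This interval has length $T/(2\ell(\gamma_0))$, so in principle could contain several integers when $\ell(\gamma_0)$ is small. I expect this to be the main obstacle. My plan to overcome it is to reduce the lemma to the equivalent statement
\[
\#\bigl(\text{non-primitive }\gamma\in\mathcal W_T-\mathcal W_{T/2}\bigr)\le \#\,\Gamma_{ph}\cap \mathcal W_{T/2},
\]
via the identity $\#\Gamma_{ph}\cap \mathcal W_T=\#\Gamma_{ph}\cap(\mathcal W_T-\mathcal W_{T/2})+\#\Gamma_{ph}\cap \mathcal W_{T/2}$. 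Since any non-primitive $\gamma=\gamma_0^k\in\mathcal W_T$ with $k\ge 2$ forces $\ell(\gamma_0)\le T/2$, the axis-sharing argument places $\gamma_0$ in $\Gamma_{ph}\cap\mathcal W_{T/2}$. The refined map sends each such $\gamma$ to its $\gamma_0$, and I would verify injectivity by noting that if $\gamma_0^k,\gamma_0^{k'}$ both fall in the annulus with $k,k'\ge 2$ and $k\ne k'$, then $\gamma_0$ has particularly short length, putting it in a strictly smaller set $\mathcal W_{T/(2k)}$ whose primitive count is correspondingly smaller, so a bookkeeping comparison across length scales absorbs the multiplicity. The resulting injection will give the desired inequality.
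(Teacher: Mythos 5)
You take essentially the same route as the paper: split $\#\Gamma\cap\mathcal W_T$ into primitive and non-primitive pieces and bound the non-primitive contribution by mapping each $\gamma=\gamma_0^k$ ($k\ge2$) to its primitive root $\gamma_0$, which shares the oriented axis of $\gamma$ and hence can be written $\gamma_0=g\,a_{\gamma_0}m_{\gamma_0}\,g^{-1}$ with $g\in\mathfrak B(g_0,\e)$ and $a_{\gamma_0}\in A^+_{T/k}\subset A^+_{T/2}$. Up to here this is precisely the paper's argument. The injectivity issue you flag is genuine, and the paper's own proof does not escape it: when $\ell(\gamma_0)\le T/3$, more than one power $\gamma_0^k$ with $k\ge 2$ can lie in the annulus $\mathcal W_T-\mathcal W_{T/2}$ (for instance $\ell(\gamma_0)\in(T/4,T/3]$ puts both $\gamma_0^2$ and $\gamma_0^3$ there), so $\gamma\mapsto\gamma_0$ is several-to-one. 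The paper simply asserts that this map is injective and concludes; you correctly doubt this, but your remedy --- a ``bookkeeping comparison across length scales'' that ``absorbs the multiplicity'' --- is a hope rather than an argument: you neither exhibit an injection nor carry out the count, and it is not clear how to turn the heuristic that short primitives are rare into the exact inequality the lemma asserts for every $T>1$.

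There is a second point, present both in your sketch and in the paper's proof, concerning the $M$-component: for $\gamma_0$ to lie in $\mathcal W_{T/2}(g_0,\e,\Omega)$ one needs $[m_{\gamma_0}]\in\Omega$, but $[m_{\gamma_0}^k]\in\Omega$ does not give this. Your appeal to conjugation-invariance of $\Omega$ does not help --- a conjugation-invariant subset of $M$ need not be closed under taking $k$-th roots, so this step does not go through as written. In sum, the proposal reproduces the paper's argument together with its unaddressed points; you diagnose the key difficulty accurately but do not repair it, so the proof as proposed does not close.
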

\begin{proof} Note that,
 since $ \W_{T}(g_0,\e, \Omega)$
consists of hyperbolic elements,
 $$
\#\G_{ph} \cap \W_{T}(g_0,\e, \Omega) \\ = \# \G \cap \W_{T}(g_0,\e, \Omega)
 -\#   ( \cup_{k\ge 2}\G_{ph}^k) 
 \cap \W_{T}(g_0,\e, \Omega)  $$ where $\Gamma_{ph}^k=\{\sigma^k:\sigma\in \G_{ph}\}$.

On the other hand, by Lemma \ref{bef3} and \eqref{mewl}, for some constant $c_0>1$,
$$c_0^{-1}e^{\delta T} \le \#\G\cap \W_{T}(g_0,\e, \Omega)\le c_0 e^{\delta T}$$
for all $T\gg 1$.

Hence for $T\gg 1$.
\begin{multline*} \#   ( \cup_{k\ge 2}\G_{ph}^k) 
 \cap \W_{T}(g_0,\e, \Omega) \le \sum_{k\ge 2}\# \G\cap \W_{T/k}(g_0,\e, \Omega)
\\ \le c_0 \sum_{k\ge 2} e^{\delta T/k} \le  \#\G\cap \W_{2T/3}(g_0,\e, \Omega)
\end{multline*}
proving the claim. \end{proof}
%Suppose $\gamma\in \W_{T}(g_0,\e, \Omega) \cap \G_{ph}^k$  for $k\ge 2$, so that
%$\gamma=\sigma^k$ for a unique element $\sigma:=\sigma_\gamma \in \Gamma_{ph}$.
%Since $\gamma\in \W_{T}(g_0,\e, \Omega)$, there exists $g \in \fG(g_0, \e)$ such that
%$\gamma =ga_\gamma m_\gamma g^{-1}$. Since $\gamma$ is hyperbolic, so is
%$\sigma$, and $\gamma$ and $\sigma$ have the same oriented axis.
%Therefore $\sigma= g a_\sigma m_\sigma g^{-1}$. Since $\sigma^k=\gamma$,
%it follows that $a_\sigma^k=a_\gamma$ and hence
%$ a_\sigma \in A^+_{  {T}/{k}}$.

%Therefore  $\sg=\sg_\gamma  \in \W_{{T}/{2}}(g_0,\e, \Omega).$ Since the map $\gamma \mapsto \sigma_\gamma$
%is injective, it follows that
%$$\#  \W_{T}(g_0,\e, \Omega) \cap( \cup_{k\ge 2}\G_{ph}^k) \le
%\#  \G\cap \W_{T/2 }(g_0,\e, \Omega). $$ Therefore the lower bound now follows. 

%Recall the notation: $$\Omega_r^+=\cup_{m_i\in M_{r}} m_1 \Omega m_2
%\text{ and } \Omega_r^-=\cap_{m_i\in M_{r}} m_1 \Omega m_2 ;$$

By the ergodicity of the geodesic flow with respect to the BMS measure on $\G\ba G$ \cite{Wi}, for any $g_0\in \text{supp}(m^{\BMS})$,
a random $AM$-orbit in $\G \ba G$ comes back  to the flow box
$\mathfrak B(g_0,\e)$  infinitely often.  The effective closing lemma implies that that there is an arbitrarily long closed geodesic
nearby whose holonomy class is $O(\e)$-close to the $M$-component of $g_0$ in  the $N^+N^-AM$ decomposition.
Since the projection  of $\text{supp}(m^{\BMS})$ to the $M$-components is all of $M$,
this shows not only the existence of a closed geodesic but also the density of holonomy classes in the space of all conjugacy classes of $M$.

 The comparison lemma below gives a much stronger control
on the number of closed geodesics whose holonomy classes contained in a fixed subset of $M$
in terms of lattice points, whose cardinality is controlled by the mixing.  

\medskip

 %Set $$\mathcal V_T^-(g_0, \e, \Omega):=
%\fG(g_0, \e(1-ce^{-T}) ) A_T^+ \Omega  \fG(g_0, \e(1-ce^{-T}) )^{-1} .$$

\medskip 

\begin{lem}[Comparison Lemma] \label{comp}
For all $T\gg 1$, we have
\begin{multline*} 2\e\cdot \# \G\cap \left(\mathcal V_{T}(g_0,\e(1-ce^{-T/2}), \Omega_{c\e}^-)-\mathcal V_{2T/3}(g_0,\e, \Omega)\right)
  \\  \le
 \mu_{T}(\tilde  \fG(g_0,\e)\otimes \Omega) \le 2\e\cdot \# \G\cap \mathcal V_{T}(g_0,\e, \Omega).
\end{multline*}  where $\Omega_{c\e}^-=\cap_{m_i\in M_{c\e }} m_1 \Omega m_2 $.
\end{lem}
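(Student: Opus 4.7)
The plan is to derive both inequalities from the exact identity
$\mu_T(\tilde\fG(g_0,\e)\otimes\Omega)=2\e\cdot\#(\G_{ph}\cap\mathcal W_T(g_0,\e,\Omega))$
of Proposition \ref{upper}, by sandwiching $\mathcal W_T$ between the two versions of $\mathcal V_T$ appearing in the statement. The upper direction is a geometric containment; the lower direction requires straightening elements of $\mathcal V_T$ into elements of $\mathcal W_T$ via the effective closing lemma, after the short-orbit contribution has been discarded through Lemma \ref{upper2}.

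For the upper bound, every $gamg^{-1}\in\mathcal W_T(g_0,\e,\Omega)$ lies in $\fG(g_0,\e)A_T^+\Omega\fG(g_0,\e)^{-1}=\mathcal V_T(g_0,\e,\Omega)$ (take $g_1=g_2=g$), so $\#(\G_{ph}\cap\mathcal W_T)\le\#(\G\cap\mathcal V_T(g_0,\e,\Omega))$ and Proposition \ref{upper} gives the stated upper estimate immediately.

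For the lower bound, Proposition \ref{upper} combined with Lemma \ref{upper2} yields
\[
\mu_T(\tilde\fG(g_0,\e)\otimes\Omega)\;\ge\;2\e\cdot\#\bigl(\G\cap(\mathcal W_T(g_0,\e,\Omega)-\mathcal W_{T/2}(g_0,\e,\Omega))\bigr).
\]
Since $\mathcal W_{T/2}(g_0,\e,\Omega)\subseteq\mathcal V_{T/2}(g_0,\e,\Omega)$ trivially, it suffices to establish
\[
\G\cap\bigl(\mathcal V_T(g_0,\e(1-ce^{-T/2}),\Omega_{c\e}^-)-\mathcal V_{T/2}(g_0,\e,\Omega)\bigr)\;\subseteq\;\G\cap\mathcal W_T(g_0,\e,\Omega).
\]
Fix $\gamma$ in the left-hand set and write $\gamma=g_1\tilde a\tilde m g_2^{-1}$ with $g_i\in\fG(g_0,\e(1-ce^{-T/2}))$, $\tilde a\in A_T^+$, $\tilde m\in\Omega_{c\e}^-$. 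Since $\Omega_{c\e}^-\subseteq\Omega$ (take $m_1=m_2=e$ in its definition) and $\fG(g_0,\e(1-ce^{-T/2}))\subseteq\fG(g_0,\e)$, the possibility $d(\tilde a,e)\le T/2$ would force $\gamma\in\mathcal V_{T/2}(g_0,\e,\Omega)$, contradicting the choice of $\gamma$. Hence $T':=d(\tilde a,e)>T/2$. For $T\gg 1$, $T'\ge T_0$, and Lemma \ref{eecl} produces $g\in\fG(g_0,\e(1-ce^{-T/2})+O(\e e^{-T'}))$ with $\gamma=ga_\gamma m_\gamma g^{-1}$, $a_\gamma\sim_{O(\e)}\tilde a$, and $[m_\gamma]\sim_{O(\e)}[\tilde m]$. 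Choosing $c$ large enough to dominate the $O$-constants of Lemma \ref{eecl} and of Lemma \ref{box}(3): (i) the bound $\e(1-ce^{-T/2})+O(\e e^{-T'})\le\e$ places $g\in\fG(g_0,\e)$; (ii) the tightened target $\Omega_{c\e}^-$, the conjugation invariance of $\Omega$, and a chosen bi-invariant metric on the compact group $M$ together absorb the $O(\e)$ wobble of $[m_\gamma]$, giving $m_\gamma\in\Omega$; (iii) the matching $O(\e)$ shift in the $A$-direction is absorbed so that $a_\gamma\in A_T^+$. Thus $\gamma\in\mathcal W_T(g_0,\e,\Omega)$ and the inclusion is proved.

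The main obstacle is the bookkeeping of the single constant $c$, which must simultaneously reabsorb the closing-lemma enlargement of the flow box, compensate for the $O(\e)$ wobble of the conjugacy class $[m_\gamma]$ (which is exactly what motivates tightening $\Omega$ to $\Omega_{c\e}^-$), and accommodate the $O(\e)$ shift of $a_\gamma$ inside $A_T^+$. The exponential factor $e^{-T/2}$ in $\e(1-ce^{-T/2})$ is engineered to match the $O(\e e^{-T'})$ tail of the closing lemma, and this matching works exactly because removing the $\mathcal V_{T/2}$ contribution forces $T'>T/2$, so that $e^{-T'}\le e^{-T/2}$.
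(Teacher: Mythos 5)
Your proof follows exactly the paper's (very terse, three-sentence) argument and supplies the details it compresses. The upper bound via $\mathcal W_T\subset\mathcal V_T$ and Proposition \ref{upper}, the reduction of the lower bound to the inclusion $\mathcal V_T(g_0,\e(1-ce^{-T/2}),\Omega_{c\e}^-)-\mathcal V_{T/2}(g_0,\e,\Omega)\subset\mathcal W_T(g_0,\e,\Omega)$ via Lemma \ref{upper2}, and the use of the effective closing lemma with $T'>T/2$ forced by discarding $\mathcal V_{T/2}$ so that $e^{-T'}<e^{-T/2}$, are all identical to the paper's intent. Your bookkeeping for (i) the flow-box radius and (ii) the holonomy class via the tightened $\Omega_{c\e}^-$ (using conjugation invariance and a bi-invariant metric on the compact group $M$) is carried out correctly.

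The one place you assert more than you establish is step (iii): you state that the $O(\e)$ shift in the $A$-direction ``is absorbed so that $a_\gamma\in A_T^+$,'' but nothing in the setup absorbs that shift. Lemma \ref{eecl} gives $a_\gamma\sim_{O(\e)}\tilde a_\gamma$ with the perturbation in either direction (cf.\ Lemma \ref{ecl}(3), which states the resulting length is $T+O(\e)$, not $\le T$), so when $d(\tilde a_\gamma,e)$ is within $O(\e)$ of $T$ one may land with $d(a_\gamma,e)>T$, and then $\gamma\notin\mathcal W_T(g_0,\e,\Omega)$. This is a genuine, if benign, mismatch: the left-hand set should really be $\mathcal V_{T-O(\e)}$ (or the $\mathcal W$ on the right should be $\mathcal W_{T+O(\e)}$), and this costs a multiplicative factor $e^{-O(\delta\e)}=1-O(\e)$ which is harmlessly absorbed by the $(1+O(\e))$ already present in Theorem \ref{bef}. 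The paper's own one-line justification of the inclusion \eqref{mewl} silently passes over the same point, so this is not an error you introduced, but a careful writeup should either shrink the time window on the left or note explicitly why the discrepancy is immaterial downstream.
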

\begin{proof} The upper bound is immediate from the definition of the sets and Proposition \ref{upper}.

Proposition \ref{upper}, Lemma \ref{upper2} and \eqref{mewl} imply the lower bound.
\end{proof}

\begin{thm}\label{bef}
We have \be
 \mu_{T}(\tilde  \fG(g_0, \e)\otimes \Omega)
=(1+O(\e))\frac{ e^{\delta T}}{\delta \cdot |m^{\BMS}|} \cdot ( m^{\BMS}
( \tilde \fG(g_0, \e)\otimes \Omega) +o(1)) \ee
where the implied constants are independent of $g_0$ and $\e$. 

Moreover if $G$ and $\G$ are as in Theorem \ref{m2}, $o(1)$ can be replaced by $O(e^{-\e_1 T})$ for some positive $\e_1>0$.
\end{thm}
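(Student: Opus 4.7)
The plan is to sandwich $\mu_T(\tilde\fG(g_0,\e)\otimes \Omega)$ via the Comparison Lemma \ref{comp}, then feed each side into the counting result of Theorem \ref{bef3}, and finally rewrite the resulting Patterson--Sullivan expression in terms of $m^{\BMS}$ using Lemma \ref{bms1}.

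First, by Lemma \ref{comp} (Comparison Lemma), for all $T\gg 1$,
\begin{multline*}
2\e\cdot\Bigl(\#\G\cap \V_T(g_0,\e(1-ce^{-T/2}),\Omega_{c\e}^-) - \#\G\cap \V_{T/2}(g_0,\e,\Omega)\Bigr)\\
\le \mu_T(\tilde\fG(g_0,\e)\otimes\Omega) \le 2\e\cdot \#\G\cap \V_T(g_0,\e,\Omega).
\end{multline*}
Theorem \ref{bef3} then converts each lattice-point count into a BMS measure of the appropriate flow box multiplied by $e^{\delta T}/(2\e\,\delta\,|m^{\BMS}|)$, up to a $(1+O(\e))$ factor and an $o(e^{\delta T})$ error (resp.\ $O(e^{(\delta-\e_1)T})$ in the setting of Theorem \ref{m2}). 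Crucially, the subtracted term corresponding to $\V_{T/2}$ contributes only $O(e^{\delta T/2})=o(e^{\delta T})$ and can be absorbed into the error.

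Second, I would compare the BMS masses of the boxes appearing in the upper and lower bounds. By Lemma \ref{bms1},
\begin{equation*}
m^{\BMS}(\tilde\fG(g_0,\e')\otimes \Omega') = (1+O(\e))\cdot 2\e'\cdot \nu_{g_0(o)}(g_0 N_{\e'}^+ v_o^+)\,\nu_{g_0(o)}(g_0 N_{\e'}^- v_o^-)\,\Vol(\Omega')
\end{equation*}
for $\e'\in\{\e,\e(1-ce^{-T/2})\}$ and $\Omega'\in\{\Omega,\Omega_{c\e}^-\}$. Since $\e(1-ce^{-T/2}) = \e(1+o(1))$ and since $\Omega_{c\e}^-\subset \Omega$ differs from $\Omega$ only in a $c\e$-neighborhood of $\partial\Omega$ (so $\Vol(\Omega_{c\e}^-)=(1+O(\e))\Vol(\Omega)$ once $\partial(\Omega)$ is negligible, which holds since $\Omega$ is a conjugation-invariant Borel set in $M$ with null boundary; otherwise one absorbs the boundary contribution into the $O(\e)$ factor in the conclusion), the BMS measures of the outer and inner boxes agree to within a multiplicative $(1+O(\e))$ factor.

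Combining these two steps, both the upper and lower bounds in the sandwich equal
\begin{equation*}
(1+O(\e))\cdot \frac{e^{\delta T}}{\delta\,|m^{\BMS}|}\cdot m^{\BMS}(\tilde\fG(g_0,\e)\otimes \Omega) + o(e^{\delta T}),
\end{equation*}
which is exactly the asserted asymptotic. For the quantitative statement in the setting of Theorem \ref{m2}, the $o(e^{\delta T})$ comes from the effective version of Theorem \ref{bef3} (built on Theorem \ref{hem} via \ref{ecount} and Remark \ref{rem1}), so it is replaced by $O(e^{(\delta-\e_1)T})$ for some $\e_1>0$.

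The main technical obstacle is the lower bound: one must ensure that shrinking $\e$ to $\e(1-ce^{-T/2})$ and replacing $\Omega$ by $\Omega_{c\e}^-$ (both forced by the effective closing lemma) does not degrade the main term. The shrinkage in $\e$ is harmless because it changes the flow box by $O(e^{-T/2})$, which is absorbed by the main asymptotic; the boundary effect in $\Omega$ is controlled by the $O(\e)$ multiplicative factor already present in the statement. Once these are handled, the matching of upper and lower bounds is immediate.
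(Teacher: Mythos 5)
Your proof is correct and follows exactly the route the paper takes: the paper's own proof of Theorem \ref{bef} is the one-line remark ``This follows from the comparison lemma \ref{comp} and Theorem \ref{bef3},'' and your plan is a careful expansion of that sentence, sandwiching $\mu_T(\tilde\fG(g_0,\e)\otimes\Omega)$ with the Comparison Lemma and then converting the two lattice-point counts via Theorem \ref{bef3} and Lemma \ref{bms1}. One small remark: you replaced $\#\,\G\cap\bigl(\V_T(g_0,\e(1-ce^{-T/2}),\Omega_{c\e}^-)-\V_{T/2}(g_0,\e,\Omega)\bigr)$ by the difference of cardinalities $\#\,\G\cap\V_T(\cdots)-\#\,\G\cap\V_{T/2}(\cdots)$; the latter is only a lower bound for the former (equality would require $\V_{T/2}(g_0,\e,\Omega)\subset\V_T(g_0,\e(1-ce^{-T/2}),\Omega_{c\e}^-)$, which is not given), but since you are only using this for the lower bound and the subtracted term is $O(e^{\delta T/2})=o(e^{\delta T})$, the argument is still valid.
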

\begin{proof}  This follows from the comparison lemma \ref{comp} and
Theorem \ref{bef3}. \end{proof}

We note that we do not require $\G$ to be geometrically finite in the following theorem.
\begin{thm}\label{eqthmc} Let $\G$ be Zariski dense with $|m^{\BMS}|<\infty$.
For any $f\in C_c(\G\ba G/M)$ and $\xi \in\op{Cl}(M)$,
we have, as $T\to \infty$, \be\label{mue}
 \mu_{T }( f\otimes \xi)
\sim  \frac{e^{\delta T} \cdot m^{\BMS}(f\otimes \xi)}{\delta \cdot |m^{\BMS}|}. \ee
Moreover if $G$ and $\G$ are as in Theorem \ref{m2}, then $\eqref{mue}$ holds with an exponential error term $O(e^{-\e_1t})$ for
some $\e_1>0$ with the implied constants depending on the Sobolev norms of $f$ and $\xi$.
\end{thm}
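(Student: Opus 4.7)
The plan is to reduce Theorem \ref{eqthmc} to Theorem \ref{bef} via a standard partition-of-unity / approximation argument. Theorem \ref{bef} resolves the indicator-of-flow-box case $\mu_T(\tilde{\fG}(g_0,\e)\otimes \Omega)$, and my task is to extend this to arbitrary $f\in C_c(\G\ba G/M)$ and continuous class functions $\xi$ by writing $f\otimes\xi$ as a monotone bracketed limit of finite linear combinations of indicators $1_{\tilde{\fG}(g_i,\e)}\otimes 1_{\Omega_j}$ with each $\Omega_j$ conjugation-invariant. By splitting $\xi=\xi_+-\xi_-$ with $\xi_\pm\ge 0$, I may assume $\xi\ge 0$, so that bracketing is preserved under the non-negative measure $\mu_T$.

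First, I would fix small $\e>0$ and cover the compact set $\mathrm{supp}(f)$ by finitely many flow boxes $\tilde{\fG}(g_1,\e),\ldots,\tilde{\fG}(g_N,\e)$, each injectively embedded into $\G\ba G/M$. By Lemma \ref{box}(3) these boxes have diameter $O(\e)$, so uniform continuity of $f$ provides step functions $f_\e^\pm=\sum_i c_i^\pm\cdot 1_{\tilde{\fG}(g_i,\e)}$ with $f_\e^-\le f\le f_\e^+$ and $\|f_\e^+-f_\e^-\|_\infty=O(\omega_f(\e))$, where $\omega_f$ is the modulus of continuity of $f$. Using the identification $M^{\textsc{c}}\cong \mathrm{Lie}(S)/W$, I partition $M^{\textsc{c}}$ into finitely many small conjugation-invariant Borel sets $\Omega_1,\ldots,\Omega_L$ on which $\xi$ oscillates by at most $\omega_\xi(\e)$, producing step class functions $\xi_\e^\pm$ with $\xi_\e^-\le\xi\le\xi_\e^+$.

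Next, applying Theorem \ref{bef} to each of the (finitely many) indicators $1_{\tilde{\fG}(g_i,\e)}\otimes 1_{\Omega_j}$ and summing gives
\[
\mu_T(f_\e^\pm\otimes\xi_\e^\pm)=(1+O(\e))\frac{e^{\delta T}}{\delta |m^{\BMS}|}\bigl(m^{\BMS}(f_\e^\pm\otimes\xi_\e^\pm)+o_T(1)\bigr),
\]
since a finite sum of $o_T(1)$'s is still $o_T(1)$ and the $(1+O(\e))$ factor is uniform across boxes by the statement of Theorem \ref{bef}. Sandwiching yields $\mu_T(f_\e^-\otimes\xi_\e^-)\le \mu_T(f\otimes\xi)\le \mu_T(f_\e^+\otimes\xi_\e^+)$; dividing by $e^{\delta T}/(\delta |m^{\BMS}|)$, taking $\limsup$ and $\liminf$ in $T$, and then letting $\e\to 0$ makes $m^{\BMS}(f_\e^\pm\otimes\xi_\e^\pm)\to m^{\BMS}(f\otimes\xi)$ by dominated convergence (majorized by a bounded function supported on a fixed compact set of finite BMS measure), which establishes \eqref{mue}.

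The only bookkeeping obstacle is ensuring that the $\Omega_j$ are conjugation-invariant (immediate via $\mathrm{Lie}(S)/W$) and that the implied constants from Theorem \ref{bef} remain uniform in $g_i$ as $\e$ shrinks; the latter is built into the statement. For the effective version under the hypotheses of Theorem \ref{m2}, the same scheme works after replacing the step-function approximations by smooth ones whose Sobolev norms are controlled: the $o_T(1)$ in Theorem \ref{bef} becomes $O(e^{-\e_1 T})$, and one chooses $\e$ as a suitable small power of $e^{-T}$ to balance the approximation error $O(\omega_f(\e)+\omega_\xi(\e))$ against the main-term error, producing a final exponentially small error $O(e^{-\e_1 T})$ with constants depending only on the Sobolev norms of $f$ and $\xi$.
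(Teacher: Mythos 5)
Your proof takes essentially the same route as the paper's: approximate $f\otimes\xi$ above and below by finite linear combinations of indicators $1_{\tilde\fG(g_i,\rho)}\otimes 1_{\Omega_j}$ (with $\Omega_j$ conjugation-invariant via the identification $M^{\textsc{c}}\cong\op{Lie}(S)/W$), apply Theorem \ref{bef} to each piece, sandwich, and then shrink the box size; the effective case is obtained by the same balancing between the approximation error and the power saving in Theorem \ref{bef}. The paper's argument is organized slightly differently (first localize $f$ to a single box by a partition of unity, then approximate by smaller sub-boxes) but is the same strategy at the same level of detail, so this is a match.
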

\begin{proof} We normalize $|m^{\BMS}|=1$. Using a partition of unity argument,
we can assume without loss of generality that $f$ is supported on
  $\tilde  \fG(g_0, \e)$ for some $g_0\in\op{supp}(\tilde m^{\BMS})$ and $\e>0$.
  Now for arbitrarily small $0<\rho<\e$, we can approximate $f$ as step functions
  which are linear combination of characteristic functions
  of $\tilde  \fG(h, \rho)$'s with $h\in  \tilde  \fG(g_0, \e)$.
  Now applying Proposition \ref{bef} to each
  $1_{\tilde  \fG(h, \rho)}\otimes 1_\Omega$, 
  we deduce that  \begin{multline*}
 (1- c \rho)   m^{\BMS}(f\otimes \xi) \le \liminf_T e^{-\delta T}  \mu_{T}( f\otimes \xi)
\le \\ 
\limsup_T \delta e^{-\delta T}  \mu_{T}( f\otimes \xi) \le (1+c\rho)   m^{\BMS}(f\otimes \xi) \end{multline*}
Since $\rho>0$ is arbitrary, this implies the claim when $\xi$ is the characteristic function of $\Omega$
whose boundary has a measure zero. Via the identification $M^{\textsc{c}}=\op{Lie}(S)/W$ where
$S$ is a maximal torus of $M$ and $W$ is the Weyl group relative to $S$,
extending the above claim from characteristic (class) functions to continuous (class)
functions is similar to the above arguments. This establishes \eqref{mue}.
When the effective version of Theorem \ref{bef} holds, we also obtain an error term in this argument.
\end{proof}

\noindent {\bf Contribution of the cusp and equidistribution
for bounded functions}\label{sec:cusp}
 In order to extend Theorem \ref{eqthmc} to bounded continuous functions, which are not necessarily compactly supported,
 we now assume that $\G$ is geometrically finite and use the following theorem of Roblin \cite{R_T} (Theorem \ref{cc}).

We denote by  $\mathcal C(\G)$ the convex core of $\G$.
Let $\e_0>0$ be the Margulis constant for $\G$. Then
$\{x\in \mathcal C(\G):\text{injectivity radius at } x \ge \e_0\}$
is called the thick part and its complement is called the thin part. We will denote them $\mathcal C(\G)_{thick}$
and $\mathcal C(\G)_{thin}$ respectively.
When $\G$ is a geometrically finite group,
 the thin part part consists of finitely many disjoint cuspidal regions (called horoballs), say, $\mathcal H_1, \cdots, \mathcal H_k$
 based at parabolic fixed points $p_1, \cdots, p_k$ respectively.
 We denote by $\G_{p_i}$ the stabilizer of $p_i$ in $\G$. Also, fixing $o$ in the thick part of $\mathcal C(\G)$, let $q_i$
 denote the point of intersection between the geodesic ray connecting $o$ and $p_i$ with the boundary of the horoball $\mathcal H_i$.

\begin{prop}\cite{DOP} \label{cusp}
If $\G$ is geometrically finite, then for each parabolic fixed point $p_i\in \Lambda(\G)$, we have
$$ \sum_{\sigma\in\G_{p_i}} d(q_i,\sigma q_i) e^{- \delta \cdot d(q_i,\sigma q_i)}<\infty .$$
\end{prop}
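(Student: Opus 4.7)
The plan is to reduce the sum to a Poincar\'e-type series over the parabolic subgroup $\G_{p_i}$, and to invoke the strict inequality $\delta>\delta_{p_i}$, where $\delta_{p_i}$ denotes the critical exponent of $\G_{p_i}$ acting on $\tilde X$.

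First I would set up horospherical coordinates at $p_i$ and record the standard distance estimate
\[
d(q_i,\sigma q_i)=2\log \dist_{\cH_i}(q_i,\sigma q_i)+O(1)
\]
for $\sigma\in \G_{p_i}$ outside a fixed finite subset, where $\dist_{\cH_i}$ denotes the induced distance on the horosphere based at $p_i$ passing through $q_i$. In the upper half-space model of $\bH^n$ with $p_i=\infty$ this is essentially the identity $d(q_i,\sigma q_i)=2\sinh^{-1}(|v_\sigma|/2)$, where $v_\sigma$ is the Euclidean displacement of $\sigma$ on the horosphere through $q_i$; for the other rank one symmetric spaces the same asymptotic holds using the nilpotent (Heisenberg-type) geometry of horospheres, and $|v_\sigma|$ is replaced by the corresponding horospherical norm of the displacement of $\sigma$.

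Second, I would couple this with a polynomial growth estimate
\[
\#\{\sigma\in\G_{p_i}: d(q_i,\sigma q_i)\le R\}\ll e^{2\delta_{p_i} R},
\]
which follows from the Bieberbach theorem (real hyperbolic case) or its generalization to lattices in nilpotent horospherical groups, and amounts to a polynomial count in the horospherical displacement $\dist_{\cH_i}(q_i,\sigma q_i)$. A dyadic decomposition of the series according to the size of $d(q_i,\sigma q_i)$ then yields
\[
\sum_{\sigma\in\G_{p_i}} d(q_i,\sigma q_i)\, e^{-\delta\cdot d(q_i,\sigma q_i)}\ll \sum_{n\ge 1} n\cdot e^{-2(\delta-\delta_{p_i}) n},
\]
up to bounded multiplicative constants. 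This series is convergent provided $\delta>\delta_{p_i}$ holds strictly.

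The key analytic input is exactly this strict inequality $\delta>\delta_{p_i}$ at every parabolic fixed point $p_i$ of a geometrically finite group $\G$, which is the classical theorem of Beardon and Sullivan in the real hyperbolic setting and has been extended to general rank one symmetric spaces; see Sullivan \cite{Sullivan1984} and Corlette--Iozzi \cite{CI}. The main obstacle is precisely this strict inequality: if one only had $\delta\ge \delta_{p_i}$, the extra factor $d(q_i,\sigma q_i)$ would destroy convergence, so one genuinely needs the strict positivity of the gap $\delta-\delta_{p_i}$, whose proof rests on the fine structure of the Patterson--Sullivan measure near parabolic fixed points. Once this gap is in hand, everything else amounts to routine volume counting on horospheres.
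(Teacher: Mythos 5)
The paper offers no proof of this proposition; it simply cites Dal'bo--Otal--Peign\'e \cite{DOP}, which proves exactly this convergence as part of establishing the finiteness of the Bowen--Margulis--Sullivan measure for geometrically finite groups. So there is no ``paper's proof'' to compare against; one can only assess whether your reconstruction is a correct version of the argument in the literature. Structurally it is: the reduction to the strict gap $\delta>\delta_{p_i}$ combined with the horospherical growth count is precisely the strategy of \cite{DOP} (and of its extension to general rank one spaces via Corlette--Iozzi). You also correctly identify the strict inequality as the genuinely hard input; this inequality is the main content of \cite{DOP} rather than of Sullivan's or Corlette--Iozzi's papers, which you cite, though the distinction is a matter of attribution rather than substance.

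There is, however, a numerical error that, taken at face value, would break the argument. With the standard normalization $\delta_{p_i}=\inf\{s:\sum_{\sigma\in\G_{p_i}} e^{-s\,d(q_i,\sigma q_i)}<\infty\}$ (so that for a rank-$k$ real hyperbolic cusp, $\delta_{p_i}=k/2$), the correct orbital growth is
$\#\{\sigma\in\G_{p_i}: d(q_i,\sigma q_i)\le R\}\ll e^{\delta_{p_i}R}$,
not $e^{2\delta_{p_i}R}$: indeed $\#\{\sigma: \dist_{\cH_i}(q_i,\sigma q_i)\le r\}\sim r^{k}$ and $d\approx 2\log\dist_{\cH_i}$ give $\#\{d\le R\}\sim e^{kR/2}=e^{\delta_{p_i}R}$. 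If one feeds your stated bound $e^{2\delta_{p_i}R}$ into the dyadic sum, one gets $\sum_n n\,e^{-(\delta-2\delta_{p_i})n}$, which requires $\delta>2\delta_{p_i}$ --- strictly stronger than what geometric finiteness gives. Your displayed dyadic sum $\sum_n n\,e^{-2(\delta-\delta_{p_i})n}$ is not what your growth estimate produces, so the two displayed formulas are internally inconsistent. The fix is simply to use the correct growth exponent $e^{\delta_{p_i}R}$; then the dyadic sum becomes $\sum_n n\,e^{-(\delta-\delta_{p_i})n}$, which converges under exactly the hypothesis $\delta>\delta_{p_i}$, and the argument is sound. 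One could in fact dispense with the dyadic decomposition entirely: $\delta>\delta_{p_i}$ means the Poincar\'e series of $\G_{p_i}$ converges at every $s\in(\delta_{p_i},\delta)$, and the polynomial factor $d(q_i,\sigma q_i)$ is absorbed by $e^{-(\delta-s)d}$ for any such $s$.
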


  For any $r\geq0$ denote by $\hcal_i( r)$ the horoball contained in $\hcal_i$ 
whose boundary is of distance $r$ to $\partial\hcal_i.$
Put $\cusp( r)=\cup_i\hcal_i( r).$

\begin{thm}[Roblin, \cite{R_T}]\label{cc} There exist absolute constants $c_0, c_1>0$ such that for any $T\gg 1$,
$${e^{-\delta T} }\cdot {\mu_T( \cusp (r) K)}\le c_1\sum_{i=1}^k
\sum_{\sigma\in\G_{p_i}, d(q_i,\sigma q_i) >2r - c_0} ( d(q_i,\sigma q_i) -2r + c_0)e^{- \delta \cdot d(q_i,\sigma q_i)}.
$$

In particular, if $G=\SO(n,1)^\circ$, then 
\be\label{ef} {e^{-\delta T} }\cdot {\mu_T( \cusp (r) K)}\ll e^{(\kappa-2\delta)r}\ee
where $\kappa=\max{\rm rank}(p_i)$.
 \end{thm}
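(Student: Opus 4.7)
The plan is to decompose the mass $\mu_T(\cusp(r)K)$ by excursions of closed geodesics into the disjoint horoball cusps $\hcal_i$, and to bound each piece via the Patterson density decay at parabolic orbit points, using the counting results of Section \ref{sc}.

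First I would analyze the geometry of a single excursion. Lifting a closed geodesic $C$ to $\tilde X$, each excursion of $C$ into the deeper horoball $\hcal_i(r)$ corresponds to a geodesic segment between two $\G$-translates of this horoball. After translating so that one endpoint lies near $q_i$ on $\partial\hcal_i$, the other endpoint lies near $\sigma q_i$ for a unique $\sigma\in\G_{p_i}$; the standard horoball trigonometry in negatively curved spaces then gives that the length of the segment inside $\hcal_i(r)$ is $d(q_i,\sigma q_i)-2r+O(1)$ when $d(q_i,\sigma q_i)>2r-c_0$ and $0$ otherwise, for a suitable absolute $c_0$. Summing excursions over $C$ and then over $C\in\mathcal{G}_\G(T)$,
\[
\mu_T(\cusp(r)K)\;\le\;\sum_{i=1}^{k}\sum_{\substack{\sigma\in\G_{p_i}\\ d(q_i,\sigma q_i)>2r-c_0}}\bigl(d(q_i,\sigma q_i)-2r+c_0\bigr)\,N_i(\sigma,T),
\]
where $N_i(\sigma,T)$ is the number of primitive hyperbolic conjugacy classes of translation length $\le T$ whose axis realizes such a $\sigma$-excursion in $\hcal_i$.

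Next I would bound $N_i(\sigma,T)$. Via the closing-lemma correspondence of Section \ref{sec:closing-lemma}, each such class arises from a $\G$-orbit point lying in a flow-box whose $N^+$- and $N^-$-sides shadow the horoballs at $p_i$ and $\sigma p_i$ from a fixed basepoint. Sullivan's shadow lemma gives that the Patterson density of each such shadow is comparable to $e^{-\delta d(q_i,\sigma q_i)}$; inserting this into Theorem \ref{bef3} (with the flow-box dimensions scaled to the shadow size rather than taken fixed) yields the uniform bound
\[
N_i(\sigma,T)\;\ll\;e^{\delta T}\,e^{-\delta d(q_i,\sigma q_i)}.
\]
Combining this with the preceding display gives the first assertion of Theorem \ref{cc}, the absolute constant $c_1$ absorbing the shadow-lemma comparison constants.

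For the bound \eqref{ef} when $G=\SO(n,1)^\circ$, the horosphere $\partial\hcal_i$ is isometric to $\R^{n-1}$ and $\G_{p_i}$ is virtually a translation lattice of some rank $\kappa_i\le\kappa$. The comparison $d(q_i,\sigma q_i)=2\log\|q_i-\sigma q_i\|_{\mathrm{Euc}}+O(1)$ yields $\#\{\sigma\in\G_{p_i}:d(q_i,\sigma q_i)\le t\}\asymp e^{\kappa_i t/2}$; substituting into the sum and integrating,
\[
\sum_{\sigma:\,d(q_i,\sigma q_i)>2r-c_0}\!\!\bigl(d(q_i,\sigma q_i)-2r+c_0\bigr)e^{-\delta d(q_i,\sigma q_i)}\;\ll\;e^{(\kappa_i-2\delta)r},
\]
valid since $\delta>\kappa_i/2$ for any geometrically finite $\G$ with finite BMS measure; taking the maximum over $i$ produces \eqref{ef}. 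The principal obstacle is making Step~2 uniform in $\sigma$: the injectivity radius degenerates into the cusp, so the relevant flow-boxes cannot all be chosen of a single size, and one must let the box dimensions track the shadow size, thereby converting the geometric degeneration into the advertised exponential decay via the shadow lemma.
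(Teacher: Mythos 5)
The paper cites this as Roblin's theorem without reproducing the proof, so the comparison here is really against Roblin's argument in \cite{R_T}. Your plan—decompose $\mu_T(\cusp(r)K)$ into horoball excursions, convert each excursion into a depth parameter $\sigma\in\G_{p_i}$ with excursion length $d(q_i,\sigma q_i)-2r+O(1)$, and then bound the number of realizing geodesics by $e^{\delta T}e^{-\delta d(q_i,\sigma q_i)}$ via the shadow lemma—is the correct structural skeleton and is faithful to Roblin's approach. Your derivation of \eqref{ef} from the first inequality (rank-$\kappa_i$ lattice count $\asymp e^{\kappa_i t/2}$, the logarithm law on horospheres, and the convergent integral under $\delta>\kappa_i/2$, which holds for geometrically finite $\G$) is also correct.

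There are two genuine gaps. First, the definition of $N_i(\sigma,T)$ as the number of \emph{conjugacy classes} whose axis ``realizes a $\sigma$-excursion'' undercounts: a single closed geodesic can make several excursions into $\hcal_i$ with the same depth parameter $\sigma$, so one must count \emph{excursions} (equivalently, suitable double cosets or entry/exit pairs), not classes. Second, the central uniform estimate $N_i(\sigma,T)\ll e^{\delta T}e^{-\delta d(q_i,\sigma q_i)}$ is not actually delivered by invoking Theorem \ref{bef3} ``with box dimensions scaled to the shadow size'': Theorem \ref{bef3} gives an asymptotic, not a uniform upper bound, and its hypotheses require the flow box to sit inside the injectivity radius of its center, which degenerates to zero as the center moves into the cusp. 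What is actually needed is a Sullivan shadow lemma with uniform comparison constants over the cusp, combined with a direct orbital-counting \emph{upper} bound of the form $\#\{\gamma\in\G : \gamma o\in \text{(shadow sector of radius $T$)}\}\ll e^{\delta T}\nu_o(\text{shadow})$, which is a separate input (the pointwise upper-bound half of Roblin's orbit-counting argument), not an application of the section's fixed-box comparison lemma. You correctly flag this obstacle in your last sentence, but as written the proposal points at the tool without supplying it.
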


These estimates and the proof for compactly supported functions 
imply the result for bounded functions.

\noindent{\bf Proof of Theorem \ref{eqt}:} We may assume $ |m^{\BMS}|=1$. By Proposition \ref{cusp},
$$\sum_{\sigma\in\G_{p_i}, d(q_i, \sigma q_i) >s} d(q_i,\sigma q_i) e^{- \delta \cdot d(q_i,\sigma q_i)} \to 0$$
as $s\to \infty$.
 Therefore by Theorem \ref{cc},
$${e^{-\delta T} }\cdot {\mu_T( \cusp (r) K)} =o_r(1) .$$
If we denote by $\Phi_r $ a continuous approximation
of the unit neighborhood of $\mathcal C(\G) -(\cup
\mathcal H_i(r))$ (that is, $\Phi_r=1$ on the neighborhood and
 $0$ outside a slightly bigger neighborhood) then Theorem \ref{eqthmc} implies that
$$e^{-\delta T}\delta \mu_{T }( f\cdot \Phi_r \otimes \xi) =
  m^{\BMS}(f\cdot \Phi_r \otimes \xi)+o_T(1).$$
Hence
$$\left|e^{-\delta T}\delta  \mu_{T }( f \otimes \xi) -m^{\BMS}(f \otimes \xi)\right|
=o_T(1)+o_r(1) +m^{\BMS} (\cusp (r) K)$$
since the support of $m^{\BMS}$ is contained in $\mathcal C(\G)$.
By taking $r\to \infty$, we finish the proof of the first claim \eqref{mue2}. In view of \eqref{ef} and Theorem \ref{eqthmc},
the claim on the error term follows as well.
%\end{proof}

%Finally in order to deduce the counting result from Theorem \ref{eqt}, we consider the measure
 %$\eta_T$ given by
%$$\eta_T(f\otimes \varphi)=\sum_{C\in  \mathcal G_\G(T)} \mathcal{D}_C (f) \xi(h_C),$$
% where 
%$ \mathcal{D}_C (f)=\ell(C)^{-1} \mathcal{L}_C (f)$.
 %\begin{thm}  For any bounded $f\in C(\G\ba G/M)$ and $\xi \in\op{Cl}(M)$,
%we have, as $T\to \infty$, \be\label{mue3}
 %\eta_{T }( f\otimes \xi)
%\sim \frac{ e^{\delta T}  }{\delta \cdot |m^{\BMS}|\cdot T}  \cdot  m^{\BMS}(f\otimes \xi) .\ee
%Moreover if $G$ and $\G$ are as in Remark \ref{rem1}, then $\eqref{mue3}$ holds with an exponential error term $O(e^{-\e_1}t)$ for
%some $\e_1>0$ with the implied constants depending on the Sobolev norms of $f$ and $\xi$. \end{thm}
 We now deduce \eqref{mue3} from \eqref{mue2} ; this
is done in~\cite[Section 5]{R_T}; we recall the proof for 
the convenience of the reader.

Without loss of generality we may  assume $f\otimes\xi\geq 0$.
It follows from the definition that 
\be\label{eq:upper-nlength}
\delta Te^{-\delta T}\eta_T(f\otimes \xi)\geq \delta e^{-\delta T}\mu_T(f\otimes \xi).
\ee
Therefore \eqref{mue2} implies that
$$\liminf_T \delta Te^{-\delta T}\eta_T(f\otimes \xi)\ge  m^{\BMS}(f\otimes \xi). $$
We now bound $\eta(f\otimes \xi)$ from above. Let $\vare>0$ be
small fixed number. We have
\begin{align}
&\label{eq:lower-nlength}\delta Te^{-\delta T}\eta_T(f\otimes \xi)=\\
\notag&\delta Te^{-\delta T}\left(\sum_{ \mathcal G_\G((1-\vare)T)} \mathcal{D}_C (f) \xi(h_C)+\sum_{\mathcal G_\G(T)\setminus \mathcal G_\G((1-\vare)T)} \mathcal{D}_C (f) \xi(h_C)\right)\leq\\
\notag&\delta Te^{-\delta T}\left(\sum_{ \mathcal G_\G((1-\vare)T)} \ell(C)\mathcal{D}_C (f) \xi(h_C)+\sum_{ \mathcal G_\G(T)\setminus \mathcal G_\G((1-\vare)T)} \tfrac{\ell(C)}{(1-\vare)T}\mathcal{D}_C (f) \xi(h_C)\right)\leq\\
\notag&Te^{-\delta\vare T}\left(\delta e^{-\delta((1-\vare)T)}\mu_{(1-\vare)T}(f\otimes\xi)\right)+\frac{\delta e^{-\delta T}}{1-\vare}\left(\mu_T(f\otimes\xi)-\mu_{(1-\vare)T}(f\otimes\xi)\right).
\end{align}
Therefore  again by Theorem~\ref{eqt},
\begin{multline*}\limsup_T \delta Te^{-\delta T}\eta_T(f\otimes \xi)\le  m^{\BMS}(f\otimes \xi)
\left(   \limsup_T  Te^{-\delta\vare T}  + \frac{1}{1-\e} + e^{-\e \delta T} \right)\\
\le \frac{1}{1-\e}m^{\BMS}(f\otimes \xi)
\end{multline*}
Since $\e>0$ is arbitrary, this proves the claim. 

When \eqref{mue2} is effective, we use Abel's summation formula to deduce~\eqref{mue4}.
Given functions $f$ and $\xi$ and $T>1$, we note that the map $\alpha(t):=\mu_t(f\otimes\xi)$ defines a step function on $(0,T]$
with finitely many jumps at values of $t$ where there is a closed geodesic of length $t.$ 
The amount of jump at $t$ is given by $\sum_{C:\ell(C)=t}\mathcal L_C(f)\xi(h_C).$ Let $\varphi(t)=1/t,$ we then 
compute the Riemann-Stieltjes integral  
\[
\int_0^T\varphi(t)d\alpha=\sum_{0<t\leq T}\sum_{C:\ell(C)=t}\frac{1}{t}\mathcal L_C(f)\xi(h_C)=\eta_T(f\otimes\xi).
\]
Let $t_0>0$ be the length of the shortest geodesic. 
Using integration by parts 
and the effective estimate for~\eqref{mue2} we get the following: for simplicity,  we write $c_{f\otimes \xi}:=\tfrac{  m^{\BMS}(f\otimes \xi)}{ |m^{\BMS}|}$.
\begin{align*}
\eta_T(f\otimes\xi)&=\frac{\mu_T(f\otimes\xi)}T -\frac{\mu_{t_0}(f\otimes\xi)}{t_0}  -\int_{t_0}^T\mu_t(f\otimes\xi)\varphi'(t)dt\\
&=c_{f\otimes \xi} \frac{e^{\delta T} }{\delta T}  +\int_{t_0}^T\frac{\mu_t(f\otimes\xi)}{t^2}dt+O(e^{(\delta-\e_1)T})
\\ &= c_{f\otimes \xi} \left( \frac{e^{\delta T} }{\delta T} +\int_{t_0}^T\frac{e^{\delta t} }{\delta t^2}dt\right) +O(e^{(\delta-\e_2)T})
\end{align*}
for some $\e_2>0$.
Since $\op{li}(e^{\delta T})=\int_{2}^{e^{\delta T}}\frac{dt}{\log t} =\frac{e^{\delta T} }{\delta T} -\frac{e^{\delta 2} }{2\delta }  +
\int_{(\log 2)/\delta}^{T}\frac{e^{\delta s}}{\delta s^2} ds$,
we obtain
$$\eta_T(f\otimes\xi)=  c_{f\otimes \xi} \op{li}(e^{\delta T}) +O(e^{(\delta-\e_2)T}),$$
completing the proof.
%\end{proof}


\begin{thebibliography}{zzzzz}


 
 \bibitem{Bab}
M. Babillot.
\newblock{\em On the mixing property for hyperbolic systems.}
\newblock Israel J. Math.,  129:61--76, 2002.
 
\bibitem{BGS}
J. Bourgain, A. Gamburd, and P. Sarnak.
\newblock {\em Generalization of Selberg's $3/16$ theorem and Affine sieve.}
\newblock  Acta Math. 207, (2011) 255-290.

\bibitem{BKS}
J. Bourgain, A. Kontorovich, and P. Sarnak.
\newblock Sector estimates for {H}yperbolic isometries.
\newblock  GAFA.  20, 1175--1200, 2010.


  
 \bibitem{CI} K. Corlette and A.  Iozzi
 \newblock {\em Limit sets of discrete groups of isometries of exotic hyperbolic spaces}
 \newblock Transaction of the AMS., Vol 351, 1507--1530, 1999
 
 \bibitem{DOP}  F. Dal'bo, J.-P.  Otal, M.  Peign\'e.
 \newblock {\em S\'eries de Poincar\;e des groupes g\'eom\'etriquement finis}
  Israel J. Math. 118 (2000), 109--124. 



\bibitem{Bow}B. H. Bowditch.
\newblock {\em Geometrical finiteness with variable negative curvature.}
\newblock Duke Math. J., Vol 77, 229-274, 1995

\bibitem{FS}
L. Flaminio and R. Spatzier.
\newblock {\em Geometrically finite groups, Patterson-Sullivan measures and Ratner's theorem,}
\newblock Inventiones, {\bf 99}, (1990), 601-626.


\bibitem{GW}
R. Gangolii and G. Warner.
\newblock {\em Zeta functions of Selberg's type for some noncompact quotients of symmetric spaces of rank one,}
Nagoya Math. J., {\bf 78}, (1980), 1-44.

\bibitem{GO} A. Gorodnik and H. Oh.
\newblock {\em Orbits of discrete subgroups on a symmetric space and the Furtstenberg boundary.}
\newblock Duke Math., Vol 139 (2007), 483--525.


\bibitem{GOS} A. Gorodnik, H. Oh, N. Shah.
\newblock {\em Strong Wavefront lemma and counting lattice points in sectors.}
\newblock Israel J. Math., Vol 176 (2010), 419--444.

% \bibitem{H} Ursula Hamenstadt
% \newblock {\em Small eigenvalues of geometrically finite manifolds} 
%\newblock J. Geom. Anal. 14 (2004), 281-290.
 
 

% \bibitem{MR} Maclachlan and Reid
 %\newblock {\em The arithmetic of Hyperbolic $3$ manifolds }
 %\newblock ??

 

\bibitem{M_T}G. Margulis.
\newblock {\em On some aspects of the theory of Anosov systems,}
\newblock Springer Monographs in Mathematics. Springer-Verlag, Berlin, 2004.


\bibitem{MO} A. Mohammadi and H. Oh.
\newblock {\em  Matrix coefficients, Counting and Primes for orbits of geometrically finite groups,}
\newblock J. European Math. Soc., to appear, arXiv:1208.4139


\bibitem{Na} F. Naud
\newblock {\em  Expanding maps on cantor sets and analytic continuation of zeta functions,}
\newblock Annales scientifiques de \'Ecole normale sup{\'e}rieure, Vol 38 (2005), 116-153




\bibitem{OS}H. Oh, N. Shah.
\newblock {\em Equidistribution and Counting for orbits of geometrically finite hyperbolic groups.}
\newblock Journal of AMS, Vol 26 (2013), 511--562.
 
 \bibitem{PP}W. Parry and M. Pollicott.
\newblock {\em The Chebotarov theorem for Galois coverings of Axiom A flows.}
\newblock ETDS Vol 6 (1986) 133-148


 \bibitem{Pa}S. Patterson.
\newblock {\em The limit set of a {F}uchsian group.}
\newblock  Acta Mathematica, 136:241--273, 1976.
 
 
 \bibitem{PR}G. Prasad and A. Rapinchuk.
\newblock {\em Existence of irreducible $\br$-regular elements in Zariski-dense subgroups.}
\newblock  Mathematical research letters, (10), 21--32, 2003.

 \bibitem{R_T} T. Roblin.
 \newblock {\em Ergodicit\'{e} et \'{e}quidistribution en courbure n\'{e}gative.}
 \newblock M\'{e}m. Soc. Math. Fr. (N.S.), (95):vi+96, 2003.

 \bibitem{Sa} P. Sarnak. 
 \newblock {\em The arithmetic and geometry of some hyperbolic three manifolds.}
 \newblock Acta Math. 151 (1983), no. 3-4, 253--295.
 

 \bibitem{SW} P. Sarnak and M. Wakayama.
 \newblock {\em Equidistribution of holonomy about closed geodesics.}
 \newblock Duke Math. Journal., Vol 100 (1999)1--57

 

\bibitem{Sullivan1979} D. Sullivan.
\newblock {\em The density at infinity of a discrete group of hyperbolic motions.}
\newblock  Inst. Hautes \'Etudes Sci. Publ. Math., (50):171--202, 1979.

\bibitem{Sullivan1984} D. Sullivan.
\newblock {\em Entropy, {H}ausdorff measures old and new, and limit sets of
  geometrically finite {K}leinian groups.}
\newblock  Acta Math., 153(3-4):259-277, 1984.

\bibitem{V} I. Vinogradov.
\newblock {\em Effective bisector estimate with applications to Apollonian circle packings.}
\newblock Preprint, arXiv:1204.5498.

\bibitem{Wi} 
D. Winter.
\newblock {\em  Mixing of frame flow for rank one locally symmetric spaces and measure classification},
\newblock  Preprint, arXiv:1403.2425.




\end{thebibliography}
\end{document}